\setlist[enumerate]{label=(\roman*),font=\normalfont}
\numberwithin{equation}{section}
\theoremstyle{plain}
\newtheorem*{theorem*}{Theorem}
\newtheorem{theorem}[equation]{Theorem}
\newtheorem{proposition}[equation]{Proposition}
\newtheorem{lemma}[equation]{Lemma}
\newtheorem{corollary}[equation]{Corollary}
\theoremstyle{definition}
\newtheorem{definition}[equation]{Definition}
\newtheorem{construction}[equation]{Construction}
\newtheorem{remark}[equation]{Remark}
\let\scr=\mathcal
\let\bb=\mathbb
\let\rm=\mathrm
\def\Z{\bb Z}
\def\Q{\bb Q}
\def\F{\bb F}
\def\A{\bb A}
\def\P{\bb P}
\def\V{\bb V}
\def\1{\mathbf 1}
\def\h{\mathrm h}
\def\G{\mathbb G}
\def\pt{{\mathpalette\pt@{.75}}}
\def\pt@#1#2{\mathord{\scalebox{#2}{$\m@th#1\bullet$}}}
\def\L{\mathrm{L}{}}
\let\into=\hookrightarrow
\let\onto=\twoheadrightarrow
\def\simto{\xrightarrow{\sim}}
\def\suchthat{\:\vert\:}
\DeclareMathOperator{\Sym}{Sym}
\def\id{\mathrm{id}}
\def\Map{\mathrm{Map}}
\def\Th{\mathrm{Th}}
\def\Pic{\mathrm{Pic}}
\def\GW{\mathrm{GW}}
\def\HH{\mathrm{H}}
\let\sect=\S
\def\S{\mathrm{S}}
\def\E{\mathbb{E}}
\DeclareMathOperator{\fib}{fib}
\def\Nis{\mathrm{Nis}}
\def\Zar{\mathrm{Zar}}
\def\et{\mathrm{\acute et}}
\def\Bl{\mathrm{Bl}}
\let\cat=\mathrm
\def\MGL{\mathrm{MGL}}
\def\KGL{\mathrm{KGL}}
\def\K{\mathrm{K}{}}
\def\SL{\mathrm{SL}}
\def\Mod{\cat{M}\mathrm{od}{}}
\def\Sm{{\cat{S}\mathrm{m}}}
\def\Vect{\cat{V}\mathrm{ect}{}}
\def\MS{\mathrm{MS}}
\def\Sp{\mathrm{Sp}}
\def\CAlg{\mathrm{CAlg}{}}
\def\MW{\mathrm{MW}}
\def\tK{\bar{\mathrm{K}}{}}
\def\GL{\mathrm{GL}}
\def\Mon{\mathrm{Mon}}
\def\sbu{\mathrm{sbu}}
\def\gys{\mathrm{gys}{}}
\def\m{\mathrm{m}}
\DeclareSymbolFontAlphabet{\mathbb}{AMSb} 
\DeclareSymbolFontAlphabet{\mathbbl}{bbold}
\let\lim=\relax
\DeclareMathOperator*{\lim}{lim}
\DeclareMathOperator*{\colim}{colim}
\let\phi=\varphi
\let\epsilon=\varepsilon
\title{Remarks on the motivic sphere without \texorpdfstring{$\mathbb A^1$}{A¹}-invariance}
\date{\today}
\author{Marc Hoyois}
\address{Fakultät für Mathematik\\
Universität Regensburg\\
Universitätsstr. 31\\
93040 Regensburg\\
Germany}
\email{\href{mailto:marc.hoyois@ur.de}{marc.hoyois@ur.de}}
\urladdr{\url{https://hoyois.app.uni-regensburg.de}}
\thanks{M.H.\ was partially supported by the Collaborative Research Center SFB 1085 \emph{Higher Invariants} funded by the DFG}
\begin{document}

\begin{abstract}
	We generalize several basic facts about the motivic sphere spectrum in $\A^1$-homotopy theory to the category $\MS$ of non-$\A^1$-invariant motivic spectra over a derived scheme.
	On the one hand, we show that all the Milnor–Witt K-theory relations hold in the graded endomorphism ring of the motivic sphere.
	On the other hand, we show that the positive eigenspace $\1_\Q^+$ of the rational motivic sphere is the rational motivic cohomology spectrum $\HH\Q$, which represents the eigenspaces of the Adams operations on rational algebraic K-theory.
	We deduce several familiar characterizations of $\HH\Q$-modules in $\MS$: a rational motivic spectrum is an $\HH\Q$-module iff it is orientable, iff the involution $\langle -1\rangle$ is the identity, iff the Hopf map $\eta$ is zero, iff it satisfies étale descent. Moreover, these conditions are automatic in many cases, for example over non-orderable fields and over $\Z[\zeta_n]$ for any $n\geq 3$.
\end{abstract}

\maketitle
\tableofcontents

\section{Introduction}

Let $S$ be a derived scheme. In this article, we prove several results about 
the motivic sphere spectrum over $S$, by which we mean
the unit object $\1$ of the symmetric monoidal category of motivic spectra $\MS_S$ introduced and studied in \cite{AnnalaIwasa2,AHI,atiyah}.
These refine familiar results about the $\A^1$-localization $\L_{\A^1}(\1)$ of $\1$, due for the most part to F.\ Morel.

Aside from the fact that, by design, $\A^1$-invariance does not hold in $\MS_S$, a key difference with stable $\A^1$-homotopy theory is that the multiplicative group $\G_\m$ is not a ``motivic sphere'' anymore, i.e., it is not $\otimes$-invertible in $\MS_S$. Instead, the Tate circle in $\MS_S$ is the desuspended projective line $\Sigma^{-1}\P^1$, which is a canonical direct summand of $\G_\m$ (whose complement is $\A^1$-contractible).

We first define various elements in the graded ring $\pi_0\Map(\1, (\Sigma^{-1}\P^1)^{\otimes *})$, which lift the elements of the same name in $\A^1$-homotopy theory: 
\begin{itemize}
	\item The Hopf map $\eta$ in degree $-1$ is induced by the multiplication of $\G_\m$ and is also the connecting map of the cofiber sequence $\P^1\to \P^2\to \P^2/\P^1$.
	\item The element $\epsilon$ in degree $0$ is the swap automorphism of $(\Sigma^{-1}\P^1)^{\otimes 2}$.
	\item For any unit $a\in\scr O(S)^\times$, the element $[a]$ in degree $1$ is induced by the point $a\in \G_\m$.
	\item For any unit $a\in\scr O(S)^\times$, the element $\langle a\rangle$ in degree $0$ is induced by the multiplication by $a$ on $\P^1$.
\end{itemize}
These elements are related by $\langle a\rangle=\eta[a]+1$ and $\epsilon=-\langle -1\rangle$.

\begin{theorem}[The Milnor–Witt K-theory relations, {Theorems~\ref{thm:easy-MW} and \ref{thm:steinberg}}]
	The following relations hold in the graded ring $\pi_0\Map(\1,(\Sigma^{-1}\P^1)^{\otimes *})$:
	\begin{enumerate}
		\item $[a][1-a]=0$ for all $a\in\scr O(S)^\times$ with $1-a\in\scr O(S)^\times$.
		\item $[ab]=[a]+[b]+\eta[a][b]$ for all $a,b\in \scr O(S)^\times$.
		\item $\eta[a]=[a]\eta$ for all $a\in \scr O(S)^\times$.
		\item $\eta h=0$, where $h=\eta[-1]+2$.
	\end{enumerate}
\end{theorem}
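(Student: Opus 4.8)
The plan is to dispatch (ii), (iii), (iv) as formal consequences of the commutative group structure on $\G_\m$, and to treat the Steinberg relation (i) separately by reducing it to the vanishing of a single ``universal symbol'' map.

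\emph{Setup and the soft relations.} Write $T=\Sigma^{-1}\P^1$ and $R_\ast=\pi_0\Map(\1,T^{\otimes\ast})$. Everything rests on the splitting $\Sigma^\infty_+\G_\m\simeq\1\oplus T$ of the pointed scheme $(\G_\m,1)$, under which $[a]$ is the $T$-component of the $S$-point $a$, the automorphism $\langle a\rangle\in\End(T)$ is $\widetilde{\Sigma}^\infty$ of multiplication by $a$ on $\G_\m$, and $\eta\colon T^{\otimes2}\to T$ is $\widetilde{\Sigma}^\infty$ of the reduced multiplication $m\colon\G_\m\wedge\G_\m\to\G_\m$; I also use the translations $\langle a\rangle=\eta[a]+1$ and $\epsilon=-\langle-1\rangle$, so $h=\langle-1\rangle+1=1-\epsilon$. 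Then (iv) is $\eta=\eta\epsilon$, which is $\widetilde{\Sigma}^\infty$ applied to $m=m\circ\tau$ ($\tau$ the switch), i.e.\ commutativity of $\G_\m$; one just has to observe that $\widetilde{\Sigma}^\infty(\tau)$ is the symmetry $\epsilon$ of $T^{\otimes2}$. For (ii), feed the $S$-point $(a,b)$, whose class is $(1,[a],[b],[a][b])$, into $\Sigma^\infty_+$ of the unreduced multiplication $\G_\m\times\G_\m\to\G_\m$, which on $\1\oplus T\oplus T\oplus T^{\otimes2}$ is $(\id,\id,\id,\eta)$ because it restricts to the identity on each axis and to $m$ on the smash factor; reading off the image $(1,[ab])$ of $ab$ gives the relation. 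For (iii), inserting $[a]$ into either leg of $\eta$ produces the endomorphism $\langle a\rangle$ of $T$ (for the second leg one also uses commutativity of $m$); transporting this coincidence of maps $T\to T$ to the identity $\eta[a]=[a]\eta$ in $R_0$ along the self-duality $T^{\otimes-1}\otimes T\simeq\1$, and using (iv) to absorb the symmetry that intervenes, finishes it.

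\emph{Reduction of the Steinberg relation.} Assume $a,1-a\in\scr O(S)^\times$, so that $a$ is an $S$-point of $U:=\G_\m\setminus\{1\}=\P^1\setminus\{0,1,\infty\}$. Then $[a][1-a]\in R_2$ is the image of $1$ under
\[
\1\xrightarrow{\ a\ }\Sigma^\infty_+U\xrightarrow{\ \iota\ }\Sigma^\infty_+(\G_\m\times\G_\m)\xrightarrow{\ \mathrm{pr}\ }T^{\otimes2},
\]
where $\iota=\Sigma^\infty_+$ of $x\mapsto(x,1-x)$ and $\mathrm{pr}$ is the projection onto the $T^{\otimes2}$-summand; equivalently, $\mathrm{pr}\circ\iota=[-]\otimes[1-(-)]$ is the ``universal symbol'' on $U$. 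So it suffices to prove that $\mathrm{pr}\circ\iota\colon\Sigma^\infty_+U\to T^{\otimes2}$ is nullhomotopic in $\MS_S$.

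\emph{The universal symbol, and the main obstacle.} In $\SH(S)$ this map is killed by an explicit $\A^1$-homotopy, which does not survive to $\MS_S$ since $\A^1$-invariance is not imposed there; making the vanishing work without it is the heart of the matter. My primary plan is to rerun Morel's geometric argument while verifying that every homotopy it uses is of scheme-theoretic origin — cofiber sequences of Thom spaces, deformation to the normal cone, elementary blow-up squares — and hence already available in $\MS_S$. Starting from the Gysin triangle $T\to\Sigma^\infty_+U\to\Sigma^\infty_+\G_\m\to\Sigma T$ attached to the trivially-normal closed immersion $\{1\}\hookrightarrow\G_\m$ (which holds in $\MS_S$ by purity for smooth closed pairs), one exploits the $S_3$-symmetry of $U=\P^1\setminus\{0,1,\infty\}$ — the six cross-ratio units $x,\tfrac1x,1-x,\tfrac1{1-x},\tfrac{x}{x-1},\tfrac{x-1}{x}$, permuted by $S_3$ and subject to multiplicative relations such as $x\cdot\tfrac{x-1}{x}=-(1-x)$ — together with (ii)–(iv) to rewrite $\mathrm{pr}\circ\iota$ and its $S_3$-conjugates, thereby reducing to the vanishing of the ``residue'' of the symbol along $T\hookrightarrow\Sigma^\infty_+U$, an element of $\pi_0\Map(T,T^{\otimes2})\cong R_1$, and then symmetrizing and invoking $\eta h=0$ to force that residue to zero. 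The step I expect to fight hardest for is precisely this last vanishing: in $\SH(S)$ the residue dies for weight/connectivity reasons that are unavailable in $\MS_S$, so one must instead exhibit a genuine nullhomotopy — most plausibly by realizing Morel's $\A^1$-homotopy as a morphism out of a blow-up of $U\times\A^1$ (or a suitable deformation space) whose relevant faces are $\A^1$-contractible in the scheme-theoretic sense that $\MS_S$ is built to respect. If a direct construction resists, the fallback is to compare along $\L_{\A^1}\colon\MS_S\to\SH(S)$: it would suffice to know that $\pi_0\Map_{\MS_S}(\1,T^{\otimes2})\to\pi_0\Map_{\SH(S)}(\1,T^{\otimes2})$ is injective, which should follow from a connectivity estimate on the fiber of $\1\to\L_{\A^1}\1$ in positive Tate twists, after which Morel's relation transports back verbatim.
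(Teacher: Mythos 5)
There are two genuine gaps, one in each half of your argument. First, your ``soft'' relations all rest on the splitting $\Sigma^\infty_+\G_\m\simeq \1\oplus T$ with $T=\Sigma^{-1}\P^1$, and this is false in $\MS_S$: without $\A^1$-invariance the reduced affine lines $(\A^1_0,1)$ and $(\A^1_\infty,1)$ are not contractible, and the correct decomposition is $\G_{\m+}\simeq \1\oplus\Sigma^{-1}\P^1\oplus\A^1_0\oplus\A^1_\infty$. Consequently your identification of $\eta$ with ``$\widetilde\Sigma^\infty$ of the reduced multiplication of $\G_\m$'' does not make sense as stated, and the computation of $[ab]$ by feeding $(a,b)$ into the multiplication has uncontrolled contributions from the cross terms $\A^1_0\otimes\Sigma^{-1}\P^1$, etc. The actual content here --- which the paper spends its Section 2 on --- is that the retraction $r\colon\G_{\m+}\to \1\oplus\Sigma^{-1}\P^1$ is \emph{multiplicative} (indeed $\E_\infty$), so that $\G_\m$ decomposes as a product of nonunital rings and all cross terms vanish; this requires making the $\P$-homotopies contracting $\A^1_0,\A^1_\infty\into\P^1$ equivariant for the $\G_\m$-action. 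Once that is in place your computations for (ii)--(iv) do go through essentially as in the paper, but without it they are not proofs.

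Second, the Steinberg relation. Your reduction to the vanishing of the universal symbol on $U=\P^1-\{0,1,\infty\}$ is fine, but the plan for that vanishing does not work as written: you invoke ``the Gysin triangle \dots which holds in $\MS_S$ by purity for smooth closed pairs,'' and purity precisely \emph{fails} in $\MS_S$ --- the Gysin map $Y/(Y-X)\to\Th_X(\scr N_{X/Y})$ exists (by Tang) but is not an isomorphism, which is exactly why the paper's proof is delicate (it compactifies to $(\P^1\times\P^1)-\{(\infty,0),(0,\infty)\}$, splits the suspended Steinberg map into two pieces, and kills each by factoring through Gysin maps whose targets are visibly contractible, using a refined normalization of the Gysin map of a zero section). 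You acknowledge that the key vanishing is the step you ``expect to fight hardest for,'' and your fallback --- injectivity of $\pi_0\Map_{\MS_S}(\1,T^{\otimes2})\to\pi_0\Map_{\SH(S)}(\1,T^{\otimes2})$ from a connectivity estimate on $\fib(\1\to\L_{\A^1}\1)$ --- is unsubstantiated and almost certainly unavailable (no such connectivity statement is known, and the whole point of $\MS_S$ is that this fiber is large). So part (i) remains unproved in your proposal.
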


\begin{theorem}[The Grothendieck–Witt relations, {Corollary~\ref{cor:GW}}]
	The following relations hold in the group $\pi_0\Map(\1,\1)$:
	\begin{enumerate}
		\item $\langle ab^2\rangle = \langle a\rangle$ for all $a,b\in\scr O(S)^\times$.
		\item $\langle a\rangle+\langle -a\rangle=\langle 1\rangle + \langle -1\rangle$ for all $a\in\scr O(S)^\times$.
		\item $\langle a\rangle +\langle b\rangle = \langle a+b\rangle + \langle (a+b)ab\rangle$ for all $a,b\in\scr O(S)^\times$ with $a+b\in\scr O(S)^\times$.
	\end{enumerate}
\end{theorem}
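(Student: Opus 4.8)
The plan is to derive the three identities formally from the four Milnor–Witt relations of the preceding theorem together with the defining formula $\langle a\rangle = \eta[a]+1$, following Morel's identification of $\mathrm{K}^{\mathrm{MW}}_0$ with $\GW$. I would first record three consequences of those relations. From $(-1)(-1)=1$ and $[ab]=[a]+[b]+\eta[a][b]$ one gets $[1] = 2[-1] + \eta[-1]^2 = h[-1]$ with $h = \eta[-1]+2$, so $\eta h = 0$ forces $\eta[1] = \eta h[-1] = 0$; feeding this back in with $a=b=1$ gives $[1] = 2[1]$, hence $[1]=0$ and $\langle 1\rangle = 1$. Second, rewriting the additivity relation as $[ab] = [a] + \langle a\rangle[b]$ and noting that $\eta[a]=[a]\eta$ forces $\eta$ to commute with every $\langle a\rangle$, one computes $\langle ab\rangle = 1 + \eta[ab] = 1 + \eta[a] + \langle a\rangle\eta[b] = \langle a\rangle\langle b\rangle$; in particular every $\langle a\rangle$ is invertible, with inverse $\langle a^{-1}\rangle$. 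Third, $\eta[a]=[a]\eta$ and $\eta h=0$ give $\eta[a]\,h = [a]\,\eta h = 0$. Relation (ii) then drops out: by multiplicativity and $h = 1 + \langle -1\rangle$, we have $\langle a\rangle + \langle -a\rangle = \langle a\rangle(1 + \langle -1\rangle) = h + (\langle a\rangle - 1)h = h + \eta[a]\,h = h = \langle 1\rangle + \langle -1\rangle$.

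The heart of the matter is relation (i). By multiplicativity and $\langle 1\rangle = 1$ it is equivalent to $\langle b\rangle^2 = \langle b^2\rangle = 1$ for every unit $b$, i.e.\ to $\eta[b^2] = 0$. The additivity relation gives $[b^2] = (\eta[b]+2)[b]$, while $\eta[b]\,h = 0$ rearranges to $2\eta[b] = -\eta^2[b][-1]$; combining these, $\eta[b^2] = \eta^2[b]\bigl([b] - [-1]\bigr)$, so it suffices to prove the degree-$2$ ``square relation'' $[b]^2 = [b][-1]$. For units $b$ with $1-b$ invertible this is the standard consequence of Steinberg: one first deduces $[b][-b]=0$ from $-b = (1-b)(1-b^{-1})^{-1}$ (so that $1-b^{-1}$ is also a unit), and then expands. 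The obstacle is that over a general derived scheme there need not be any unit whose complement is invertible, so — in contrast to the case of a field, where the only exceptional unit is $1$ — the square relation cannot be extracted from the four relations alone. Here I would argue geometrically: $\langle b\rangle$ is the automorphism of $\Sigma^{-1}\P^1$ induced by multiplication by $b$ on $\P^1$, and since scalars act trivially on $\P^1$, the identity $\mathrm{diag}(b,1)^2 = b\cdot\mathrm{diag}(b,b^{-1})$ realizes $\langle b\rangle^2$ as the automorphism induced by $\mathrm{diag}(b,b^{-1})\in\SL_2(\scr O(S))$; writing this matrix as an explicit product of elementary transvections reduces the claim to showing that an elementary transvection acts trivially on $\Sigma^{-1}\P^1$ in $\MS$, i.e.\ induces $1\in\pi_0\Map(\1,\1)$. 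This last step — the one genuinely non-formal, $\MS$-specific input — is where I expect the main difficulty to lie.

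Finally, granting relation (i) (so that $\langle u\rangle$ depends only on the class of $u$ modulo squares), relation (iii) follows from Steinberg applied to the unit $x = -a/b$, whose complement $1-x = (a+b)/b$ is again a unit: expanding $[x]$ and $[1-x]$ by means of $[uv^{-1}] = [u] - \langle uv^{-1}\rangle[v]$, multiplying the resulting identity $[x][1-x]=0$ by $\eta^2$, and using $\eta^2[u][v] = \eta[u]\,\eta[v]$, the commutation of $\eta$ with the $\langle\cdot\rangle$'s, multiplicativity and $\langle b^2\rangle = 1$, one arrives after collecting terms at $\eta[a] + \eta[b] = \eta[a+b] + \eta[(a+b)ab]$, which on adding $2$ to both sides is relation (iii). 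This is the Milnor–Witt incarnation of the classical fact that the binary-form relation $\langle a,b\rangle\cong\langle a+b,\,ab(a+b)\rangle$ in $\GW$ is equivalent to Steinberg in $\mathrm{K}^{\mathrm{MW}}_*$.
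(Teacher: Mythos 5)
Your proposal is correct and follows essentially the same route as the paper. Relation (ii) is derived by exactly the paper's computation ($\langle a\rangle h=h$ via $\eta[a]h=[a]\eta h=0$), and your observation that (i) is \emph{not} a formal consequence of the Milnor--Witt relations over a general base is precisely the point the paper makes before its Proposition~\ref{prop:a^2=1}. For (i) the paper does what you propose: it reduces $\langle a^2\rangle=1$ to the action of $\left(\begin{smallmatrix}a&0\\0&a^{-1}\end{smallmatrix}\right)$ on $\P^1_+$ and places this matrix in the normal closure of the elementary subgroup (via a commutator identity rather than your direct Whitehead-lemma factorization into transvections --- both work over any ring). The step you flag as ``where the main difficulty lies'' --- that an elementary transvection $\left(\begin{smallmatrix}1&f\\0&1\end{smallmatrix}\right)$ acts trivially on $\P^1_+$ in $\h\MS_S$ --- is not a new difficulty: it is exactly the paper's Lemma~\ref{lem:E2}, and is supplied by the equivariant homotopy already constructed in the proof of $\P$-homotopy invariance \cite[Theorem 4.1(ii)]{AHI}; you should cite that rather than leave it open. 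For (iii) your manipulation (Steinberg applied to $-a/b$) differs in bookkeeping from the paper's (divide by $a+b$ to reduce to $\langle a\rangle+\langle 1-a\rangle=1+\langle a(1-a)\rangle$, which is $\eta^2[a][1-a]=0$), but both are the standard $\K^{\MW}_0$-to-$\GW$ computation and both invoke (i) at the end to replace $\langle ab/(a+b)\rangle$ by $\langle (a+b)ab\rangle$.
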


After inverting $2$, the motivic sphere spectrum splits into the eigenspaces of the involution $\langle -1\rangle$:
\[
\1[\tfrac 12] = \1[\tfrac 12]^+ \times \1[\tfrac 12]^-.
\]
Over regular noetherian schemes, where K-theory is $\A^1$-invariant, Morel showed that the motivic spectrum $\L_{\A^1}(\1_{\Q}^+)$ represents Beilinson's rational motivic cohomology groups, defined as the eigenspaces of the Adams operations on the rational K-groups.
As proved in \cite[\sect 9]{atiyah}, the latter are represented by an idempotent motivic spectrum $\HH\Q$ in $\MS_S$ for any derived scheme $S$, and we obtain here the following generalization of Morel's result:

\begin{theorem}[Motivic Serre finiteness, {Corollary~\ref{cor:Q+}}]
	$\1_\Q^+=\HH\Q$.
\end{theorem}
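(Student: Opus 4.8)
The plan is to realize $\1_\Q^+$ and $\HH\Q$ as the same idempotent $\E_\infty$-algebra by checking that each is a module over the other. Recall that $\1_\Q^+$ is the summand of $\1_\Q$ cut out by the idempotent $\tfrac12(1+\langle -1\rangle)$, so $\langle -1\rangle$ acts as the identity on $\1_\Q^+$ and on every object of $\Mod_{\1_\Q^+}$, and conversely every rational motivic spectrum on which $\langle -1\rangle=\id$ is a $\1_\Q^+$-module. On $\1_\Q^+$ the element $h=\eta[-1]+2=\langle -1\rangle+1$ then acts as $2$, hence invertibly; combined with the Milnor–Witt relation $\eta h=0$ this forces $\eta=0$ on $\1_\Q^+$, and then $\langle a\rangle=\eta[a]+1=1$ for all $a\in\scr O(S)^\times$. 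Conversely, $\eta=0$ on a rational motivic spectrum $X$ already implies $\langle -1\rangle=\eta[-1]+1=\id_X$, so $\Mod_{\1_\Q^+}$ is exactly the full subcategory of rational motivic spectra on which $\langle -1\rangle$ acts as the identity, equivalently on which $\eta$ acts as $0$; the theorem asserts that this subcategory equals $\Mod_{\HH\Q}$.

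Since $\HH\Q$ is idempotent, it suffices to prove $\HH\Q\in\Mod_{\1_\Q^+}$ and $\1_\Q^+\in\Mod_{\HH\Q}$: two idempotent algebras that are modules over one another are canonically equivalent, the composite $A\to A\otimes B\leftarrow B$ of unit maps being an equivalence of $\E_\infty$-algebras. The first membership is easy. The spectrum $\HH\Q$ is orientable (the projective bundle formula for $\KGL_\Q$ passes to its Adams eigenspaces, cf.\ \cite[\sect 9]{atiyah}), and for any orientable ring spectrum the cofiber sequence $\P^1\to\P^2\to\P^2/\P^1$ splits after smashing with it, so its connecting map $\eta$ acts as $0$; hence $\langle -1\rangle=\id$ on $\HH\Q$, i.e.\ $\HH\Q\in\Mod_{\1_\Q^+}$. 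Concretely, the resulting $\1_\Q^+$-algebra structure is carried by the composite $\1_\Q^+\to\KGL_\Q\to\HH\Q$ of the unit map of rational algebraic K-theory — which factors through $\1_\Q^+$ because $\KGL_\Q$ is oriented — with the projection onto the weight-$0$ Adams eigenspace.

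The substance of the theorem is the reverse membership $\1_\Q^+\in\Mod_{\HH\Q}$; granting it, $\HH\Q\simeq\1_\Q^+$ and, by the previous paragraph, $\Mod_{\HH\Q}=\{\eta=0\}$. Since $\1_\Q^+$ is a rational $\E_\infty$-algebra with $\eta=0$, my route is: (i) show that a rational $\E_\infty$-algebra $R$ with $\eta=0$ is orientable; (ii) re-choose the orientation of $R$ so that its formal group law is additive — possible because over the $\Q$-algebra $\pi_0R$ every formal group law is canonically isomorphic to the additive one through its logarithm; (iii) invoke the universal property of $\HH\Q$ as the initial rational oriented $\E_\infty$-algebra with additive formal group law — equivalently, that $\MGL_\Q$ is a free $\HH\Q$-algebra on polynomial generators in even bidegrees, so that any $\MGL_\Q$-algebra, and hence any orientable rational ring spectrum, lies in $\Mod_{\HH\Q}$ — to conclude that $R=\1_\Q^+$ is an $\HH\Q$-algebra. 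Steps (ii) and (iii) are formal and rely only on standard properties of $\HH\Q$ and $\MGL_\Q$ in $\MS_S$ from \cite{AHI,atiyah}.

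The main obstacle is step (i): producing an actual orientation out of the mere vanishing of $\eta$, the converse of the elementary implication used in the easy direction. I would argue along the cell filtration $\P^\infty=\colim_n\P^n$: an orientation of $R$ is a class in $\widetilde R^{*,*}(\P^\infty)$ restricting to the canonical generator over $\P^1$, and the obstructions to assembling it along the tower $\bigl(R^{*,*}(\P^n)\bigr)_n$ are controlled by the attaching maps, which become formal after rationalization once $\eta=0$ — the exact analogue of the classical fact that every rational ring spectrum is complex orientable. The new difficulty compared with the $\A^1$-invariant or topological settings is that one must control the non-$\A^1$-invariant contributions to $R^{*,*}(\P^n)$ and to the inverse limit computing $\widetilde R^{*,*}(\P^\infty)$, for which the structural input on $\MS_S$ from \cite{AHI,atiyah} — the projective bundle formula, the cell structure of projective spaces, and the behaviour of $\widetilde R^{*,*}$ under these limits — is essential. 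An alternative to (i)–(iii) would be to compute $\pi_0\Map(\1,(\Sigma^{-1}\P^1)^{\otimes *})$ on the $+$-summand directly and identify it with rational motivic cohomology, but that is closer to Morel's original argument and seems the harder path here.
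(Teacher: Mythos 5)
Your overall architecture matches the paper's: both reduce the statement to showing that $\1_\Q^+$ is orientable, and then invoke the idempotence of both algebras together with the fact (from \cite[Proposition 9.19]{atiyah}) that an orientable rational motivic spectrum is an $\HH\Q$-module. Your first two paragraphs (the relations $h=\langle-1\rangle+1$, $\eta h=0\Rightarrow\eta=0$ on the plus part, and the easy membership $\HH\Q\in\Mod_{\1_\Q^+}$) are correct and consistent with the paper's Lemma~\ref{lem:brackets}, Lemma~\ref{lem:swap-GW} and Remark~\ref{rmk:1/2+}.

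The genuine gap is your step (i). You propose to produce an orientation of $\1_\Q^+$ by obstruction theory along the cell filtration $\P^\infty=\colim_n\P^n$, asserting that the obstructions "become formal after rationalization once $\eta=0$", in analogy with the classical fact that every rational ring spectrum is complex orientable. But that classical fact is itself a corollary of Serre's theorem $\pi_*^s\otimes\Q=\Q$ (which is why the present statement is called \emph{motivic} Serre finiteness): without it, the obstruction to extending an orientation class from $\P^n$ to $\P^{n+1}$ lives in a homotopy group of $R$ in positive simplicial degree and negative weight, namely a group of the form $\pi_{2n-1}\bigl((\Sigma^{-1}\P^1)^{\otimes -n}\otimes\1_\Q^+\bigr)$. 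Nothing in the paper (or in \cite{AHI,atiyah}) gives the vanishing of these groups, and indeed knowing them to vanish is essentially equivalent to the theorem you are trying to prove; the hypothesis $\eta=0$ kills none of them by any formal argument. Even in the $\A^1$-invariant setting over a field, the corresponding vanishing is a hard theorem of Morel (connectivity plus the computation of the $0$-line), not an obstruction-theoretic formality. The paper sidesteps all of this with Arndt's trick: rationally on the plus part, $\Sym^n(\Sigma^{-1}\P^1)=0$ for $n\geq 2$ because $\Sigma_n$ acts by the sign representation (Lemma~\ref{lem:Sym-square-zero}), so $\Sym(\Sigma^{-1}\P^1)$ is identified with the nonunital summand $Q$ of $\G_{\m+}$ as an augmented $\E_\infty$-ring; applying the bar construction in $\CAlg_{/\1}$ turns this retraction into an $\E_\infty$-map $\Pic_+\to\Sym(\Sigma_{\P^1}\1_\Q^+)$, which is the orientation. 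If you want to complete your proof you need either this argument or some other concrete input replacing the missing vanishing; as written, step (i) is the entire content of the theorem and is not established.
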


The category of \emph{Beilinson motives} over $S$ is the full subcategory
\[
\Mod_{\HH\Q}(\MS_S)=(\MS_S)_\Q^+\subset \MS_S.
\] 
We conclude with several characterizations of Beilinson motives, generalizing the analogous results in $\A^1$-homotopy theory proved by Cisinski and Déglise in \cite[\sect 16]{CD}:

\begin{theorem}[Characterizations of Beilinson motives, {Theorem~\ref{thm:beilinson}}]
The following conditions are equivalent for a rational motivic spectrum $E$:
\begin{enumerate}
	\item $E$ is an $\HH\Q$-module.
	\item $E$ admits a structure of $\MGL$-module.
	\item $E$ is orientable.
	\item The swap automorphism of $\Sigma_{\P^1}^2 E$ is the identity.
	\item The involution $\langle -1\rangle\colon E\to E$ is the identity.
	\item The Hopf map $\eta\colon \Sigma_{\P^1} E\to \Sigma E$ is zero.
	\item $E$ satisfies étale descent.
\end{enumerate}
Moreover, these conditions always hold if 
none of the residue fields of $S$ are formally real and $-1$ is a unit sum of squares in $\scr O_{S,s}^{\rm h}$ whenever $\F_2$ is algebraically closed in $\kappa(s)$ (see Definition~\ref{def:unit-sum}).
\end{theorem}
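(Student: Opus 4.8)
The plan is to take condition (v), that the involution $\langle -1\rangle$ acts as the identity on $E$, as the hub, and show it is equivalent to each of the others.

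\emph{The core equivalences.} Since $\langle -1\rangle^2 = \epsilon^2 = 1$, inverting $2$ produces orthogonal idempotents $e^{\pm} = \tfrac12(1\pm\langle -1\rangle)$ splitting $\1_\Q \simeq \1_\Q^+\times\1_\Q^-$, and by Corollary~\ref{cor:Q+} (Motivic Serre finiteness) the subcategory $\Mod_{\HH\Q}(\MS_S) = (\MS_S)_\Q^+$ is exactly the smashing localization at $e^+$; hence a rational $E$ is an $\HH\Q$-module iff $e^-E = 0$ iff $\langle -1\rangle = \id_E$, which is (i) $\Leftrightarrow$ (v). For (v) $\Leftrightarrow$ (vi): the relations $h = \eta[-1]+2 = 1+\langle -1\rangle$ and $\eta h = 0$ of Theorem~\ref{thm:steinberg} give $\eta\langle -1\rangle = -\eta$, so $\langle -1\rangle = \id_E$ forces $2\eta = 0$ on $E$ and thus $\eta = 0$ rationally, while $\langle -1\rangle = 1+\eta[-1]$ conversely shows $\eta = 0$ on $E$ implies $\langle -1\rangle = \id_E$. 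For (v) $\Leftrightarrow$ (iv): writing $\P^1\simeq S^1\wedge(\Sigma^{-1}\P^1)$ and applying the standard coherence identity for the symmetry of a tensor square, the swap of $\Sigma_{\P^1}^2E$ is conjugate to $\tau_{S^1,S^1}\otimes\tau_{\Sigma^{-1}\P^1,\Sigma^{-1}\P^1}\otimes\id_E = (-1)\cdot\epsilon\cdot\id_E = \langle -1\rangle\cdot\id_E$ (recall $\epsilon = -\langle -1\rangle$), so it is the identity iff $\langle -1\rangle = \id_E$.

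\emph{The oriented/\,$\MGL$ cycle.} As $\HH\Q$ represents rational motivic cohomology, it is an oriented ring spectrum, hence receives a ring map $\MGL\to\HH\Q$ by the universal property of $\MGL$; restriction of scalars along this map turns every $\HH\Q$-module into an $\MGL$-module, giving (i) $\Rightarrow$ (ii). The orientation of $\MGL$ induces Thom isomorphisms on every $\MGL$-module, so (ii) $\Rightarrow$ (iii). Finally (iii) $\Rightarrow$ (v): on an orientable $E$ the Thom isomorphism of the rank-one trivial bundle twisted by $-1\in\scr O(S)^\times$ identifies $\langle -1\rangle\colon E\to E$ with $\id_E$. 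Together with (i) $\Leftrightarrow$ (v), this closes the loop among (i)–(vi).

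\emph{Étale descent.} For (i) $\Rightarrow$ (vii): $\HH\Q$ satisfies étale descent — one deduces this from the description of $\HH\Q$ via the Adams eigenspaces of $\KGL_\Q$ together with étale (hyper)descent for rational algebraic K-theory — and étale descent is inherited by $\HH\Q$-modules, $\HH\Q$ being an idempotent algebra, exactly as for Beilinson motives in \cite[\sect 16]{CD}. For (vii) $\Rightarrow$ (v): let $p\colon S'\to S$ be the finite étale double cover obtained by adjoining $\sqrt{-1}$, which is étale since $2$ is invertible (and may be split). Over $S'$, and over every term of its Čech nerve, $-1$ is a square, so the Grothendieck–Witt relation $\langle ab^2\rangle=\langle a\rangle$ of Corollary~\ref{cor:GW} gives $\langle -1\rangle = \langle 1\rangle = \id$ there. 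Since $E$ satisfies étale descent, $\Map_{\MS_S}(E,E)\simeq\lim_\Delta\Map_{\MS_{S'^{\bullet}}}(p^*E,p^*E)$, and under this equivalence $\langle -1\rangle$ and $\id_E$ become the same point; hence $\langle -1\rangle = \id_E$.

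\emph{The ``moreover'' clause and the main obstacle.} By (i) $\Leftrightarrow$ (v) it suffices to prove $\langle -1\rangle = \id$ on $\1_\Q$ over $S$, since then $\1_\Q^- = 0$, $\1_\Q = \HH\Q$, and every rational $E$ is an $\HH\Q$-module. As $S\mapsto\MS_S$ satisfies Nisnevich descent and the vanishing of a motivic spectrum is detected on the henselian local rings, this reduces to $S = \Spec R$ with $R = \scr O_{S,s}^{\rm h}$. The Grothendieck–Witt relations of Corollary~\ref{cor:GW} yield a homomorphism $\GW(R)_\Q\to\pi_0\Map_{\MS_R}(\1_\Q,\1_\Q)$ sending $\langle -1\rangle$ to $\langle -1\rangle$, and by Pfister's theorem $\langle -1\rangle = \langle 1\rangle$ already in $\GW(R)_\Q$ once $-1$ is a sum of squares in $R$ (equivalently, $R$ has no ordering). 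It remains to verify this from the hypotheses: if $\Char\kappa(s)\neq 2$, then $\kappa(s)$ not being formally real gives $-1 = \sum\bar a_i^2$ with $\bar a_i\in\kappa(s)^\times$, which lifts to a sum of squares in $R$ after a Henselian correction (using $2\in R^\times$); if $\Char\kappa(s) = 2$ and $\F_2$ is not algebraically closed in $\kappa(s)$, then $\kappa(s)$ contains some $\F_{2^k}$ with $k\geq 2$, hence $R$ contains a primitive $n$-th root of unity $\zeta_n$ for some odd $n>1$, and $-1 = \sum_{j=1}^{n-1}\zeta_n^j$ expresses $-1$ as a sum of squares (each $\zeta_n^j$ is a square since $n$ is odd); and if $\F_2$ is algebraically closed in $\kappa(s)$ the hypothesis supplies $-1$ as a sum of squares of units in $R$ directly. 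I expect the main obstacle to be exactly this last step: identifying the condition on $R = \scr O_{S,s}^{\rm h}$ that makes $\langle -1\rangle$ rationally trivial, and arranging the case division so that residue characteristic $2$ — where the Steinberg relation and the ``parallelogram'' relation $\langle a\rangle+\langle b\rangle=\langle a+b\rangle+\langle(a+b)ab\rangle$ cannot be applied directly — is handled by roots of unity on one side and by the unit-sum-of-squares hypothesis on the other, together with the descent-theoretic reduction from $S$ to its henselian local rings (and, more mundanely, checking that étale descent of $\HH\Q$ and its modules carries over from the $\A^1$-invariant setting).
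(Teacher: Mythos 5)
Your chain of equivalences among (i)--(vi) is correct and essentially the paper's (same pivots: $\epsilon=-\langle-1\rangle$, $\eta h=0$, the orientation of $\HH\Q$, and Remark~\ref{rmk:1/2+}). The genuine gap is in (vii)~$\Rightarrow$~(v). You adjoin $\sqrt{-1}$ and assert the resulting double cover is étale ``since $2$ is invertible,'' but only the spectrum $E$ is rational --- the base $S$ is an arbitrary derived scheme, and at a point of residue characteristic $2$ the extension $R\to R[x]/(x^2+1)$ is ramified, so this is not an étale cover precisely where the argument is needed. The paper's Proposition~\ref{prop:1-etale} instead uses that $\Spec\Z[i]\sqcup\Spec\Z[\zeta_3]\to\Spec\Z$ is an étale cover and checks that $-1$ is a unit sum of squares in both rings; the $\Z[\zeta_3]$ case (which covers characteristic $2$) writes $-1=\zeta_3+\zeta_3^2$ as a sum of two unit squares and therefore runs through the additivity relation $\langle a\rangle+\langle b\rangle=\langle a+b\rangle+\langle(a+b)ab\rangle$ of Corollary~\ref{cor:GW}, i.e.\ through the Steinberg relation. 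As written, your implication (vii)~$\Rightarrow$~(v) is only proved for $\Z[\tfrac12]$-schemes.

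Two further points. First, for (i)~$\Rightarrow$~(vii) you assert that étale descent of $\HH\Q$ is ``inherited by $\HH\Q$-modules''; this is not formal, and the paper's Proposition~\ref{prop:HQ-etale} is a substantive argument: Atiyah duality for finite étale $f$ produces a transfer $\tau_f$, one shows $\tau_f$ acts on $\HH\Z$-modules as multiplication by $\deg f$ by reducing (via left Kan extension of the stack of finite étale schemes) to smooth $\Z$-algebras, and rational invertibility of the degree makes $f^*$ conservative. You flag this as an obstacle, but it is a real missing piece, not a routine transfer from \cite{CD}. Second, in the ``moreover'' clause you route through a homomorphism $\GW(R)_\Q\to\pi_0\Map(\1_\Q,\1_\Q)$ and Pfister's theorem; the existence of such a homomorphism presupposes that the relations of Corollary~\ref{cor:GW} present $\GW(R)$ for an arbitrary henselian local ring, which is not known in general (and is delicate for residue field $\F_2$; cf.\ the $\Z/4$ discussion in Remark~\ref{rmk:unit-sum}). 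The paper sidesteps this: Lemma~\ref{lem:unit-sum} derives $2^n(\langle-1\rangle-1)=0$ directly from the relations by induction on the exponent of the unit sum of squares, which is all that is needed. Your case analysis of when $-1$ is a unit sum of squares in $\scr O^{\rm h}_{S,s}$ does match Remark~\ref{rmk:unit-sum}.
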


\subsection*{Conventions}

We use throughout the Nisnevich-local version of $\MS$ as in \cite{atiyah}:
\[
\MS_S=\Sp_{\P^1}(\scr P_{\Nis,\sbu}(\Sm_S,\Sp)).
\]
However, with the exception of the statements about étale descent in Section~\ref{sec:beilinson}, all results in this paper hold in the simplest version of $\MS$ defined using Zariski descent and elementary blowup excision as in \cite{AHI} (provided one also removes the word ``henselian'' in Proposition~\ref{prop:-1}).

\subsection*{Acknowledgments}

I am grateful to Toni Annala and Ryomei Iwasa for many discussions around the topics of this article.
I also want to thank Alexey Ananyevskiy for 
some valuable correspondence about
Proposition~\ref{prop:a^2=1}, and Longke Tang for communicating his proof of Proposition~\ref{prop:normalization}.

\section{The multiplicative structure on the Tate circle}
\label{sec:Gm}

In stable $\A^1$-homotopy theory, the Tate circle $\Sigma^\infty_{\P^1}\G_\m$
inherits a multiplicative structure from the group scheme $\G_\m$.
More precisely, it is a nonunital $\E_\infty$-ring as the fiber of the augmention map $\Sigma^\infty_{\P^1}\G_{\m+}\to \1$. 
In this section, we show that the Tate circle $\Sigma^{-1}\P^1$ in $\MS$ splits off multiplicatively from $\G_\m$.

Let $\A^1_0=\P^1-\infty$ and $\A^1_\infty=\P^1-0$. Recall that $\Sigma^{-1}\P^1$ is a direct summand of $\G_\m$ in $\MS$, by the Bass fundamental theorem \cite[Proposition 4.12]{AHI}. In fact, the map
\[
\partial \colon \Sigma^{-1}\P^1\to \G_\m
\]
induced by the covering $\P^1=\A^1_0\cup \A^1_\infty$
(where everything is pointed at $1$)
admits a canonical retraction
\[
r\colon \G_\m\to \Sigma^{-1}\P^1,
\]
due to the fact that both embeddings $\A^1_0\into \P^1$ and $\A^1_\infty\into\P^1$ are canonically nullhomotopic \cite[Corollary 4.11]{AHI}.
The complementary summand to $\Sigma^{-1}\P^1$ in $\G_\m$ is $\A^1_0\oplus\A^1_\infty$.

\begin{remark}[Equivariant $\P$-homotopies]
	\label{rmk:equivariance}
	Let $M\in\Mon(\scr P(\Sm_S))$ be a presheaf of monoids acting on $X\in\scr P(\Sm_S)$, let $\scr E\in\Vect(X/M)$, and let $\sigma\colon \scr E\to\scr O_{X/M}$ be a linear map. Then the homotopy $h(\sigma)$ of \cite[Theorem 4.1(ii)]{AHI} between the composites
	\[
	\begin{tikzcd}[cramped, sep=scriptsize]
		X \ar[shift left]{r}{\sigma} \ar[shift right]{r}[swap]{0} & \V_X(\scr E) \ar[hook]{r} & \P_{X}(\scr E\oplus\scr O)
	\end{tikzcd}
	\]
	in $(\MS_S)_{/X_+}$ is $M$-equivariant, i.e., it lifts to a homotopy in $\Mod_{M_+}(\MS_S)_{/X_+}$.
	This follows from the fact that $h(\sigma)$ is $\scr P(\Sm_S)$-linear in $(X,\scr E,\sigma)$, which is a formal consequence of the functoriality of $h(\sigma)$ in the base $S$ and the smooth projection formula.
\end{remark}

\begin{construction}[The $\E_\infty$-decomposition of $\G_{\m+}$]
	\label{cst:Q}
	Consider the pullback square
	\[
	\begin{tikzcd}
		\G_{\m+} \ar{r} \ar{d} & \A^1_{0+} \ar{d} \\
		\A^1_{\infty+} \ar{r} & \P^1_+
	\end{tikzcd}
	\]
	in $\Mod_{\G_{\m+}}(\MS_S)_{/\1}$, and define $Q\in \Mod_{\G_{\m+}}(\MS_S)_{/\1}$ by the pullback square
	\[
	\begin{tikzcd}
		Q \ar{r} \ar{d} & \1 \ar{d}{0} \\
		\1 \ar{r}{\infty} & \P^1_+\rlap.
	\end{tikzcd}
	\]
	There is a canonical map $\partial\colon Q\to \G_{\m+}$ in $\Mod_{\G_{\m+}}(\MS_S)_{/\1}$.

	Consider the $\G_\m$-equivariant vector bundle $\pi_2\colon \A^1\times\A^1\to \A^1$, where $\G_\m$ acts diagonally on $\A^1\times\A^1$.
	By $\P$-homotopy invariance and Remark~\ref{rmk:equivariance}, the zero and diagonal sections $\A^1\rightrightarrows \P^1\times\A^1$
	become homotopic in $\Mod_{\G_{\m+}}(\MS_S)_{/\A^1_+}$.
	Composing with the projection $\P^1\times\A^1\to\P^1$, we obtain commuting triangles
	\[
	\begin{tikzcd}[column sep={1cm,between origins}]
		\A^1_{0+} \ar[hook]{rr} \ar{dr} & & \P^1_+ \\
		& \1 \ar{ur}[swap]{0} &
	\end{tikzcd}
	\qquad
	\begin{tikzcd}[column sep={1cm,between origins}]
		\A^1_{\infty+} \ar[hook]{rr} \ar{dr} & & \P^1_+ \\
		& \1 \ar{ur}[swap]{\infty} &
	\end{tikzcd}
	\]
	in $\Mod_{\G_{\m+}}(\MS_S)_{/\1}$. Together, these triangles define a retraction $r\colon \G_{\m+}\to Q$ of $\partial$ in $\Mod_{\G_{\m+}}(\MS_S)_{/\1}$.
	The composite $\partial\circ r\colon \G_{\m+}\to Q\to \G_{\m+}$ is thus an idempotent endomorphism in $\Mod_{\G_{\m+}}(\MS_S)_{/\1}$; it defines an idempotent element $e$ in the ring $\pi_0\Map(\1,\G_{\m+})$, whose image in $\pi_0\Map(\1,\1)$ is the identity.
	Hence, we obtain an $\E_\infty$-ring structure on $Q$ such that $r\colon \G_{\m+}\to Q$ is an $\E_\infty$-map, exhibiting $Q$ as the localization $\G_{\m+}[e^{-1}]$. Moreover, the augmentation $\G_{\m+}\to \1$ factors through $r$, since it sends $e$ to $1$.
	Together with the $\E_\infty$-map $\G_{\m+}\to \A^1_{0+}\times_\1 \A^1_{\infty+}$, we obtain an isomorphism of augmented $\E_\infty$-rings
\begin{equation}\label{eqn:G_m-decomposition}
\G_{\m+}= Q \times_\1\A^1_{0+}\times_\1 \A^1_{\infty+}.
\end{equation}
\end{construction}

\begin{construction}[The nonunital $\E_\infty$-ring structure on $\Sigma^{-1}\P^1$]
	\label{cst:nonunital}
	By definition of $Q$, there are canonical isomorphisms
	\[
	\fib(Q\to\1)=\Sigma^{-1}\fib(\P^1_+\to\1)=\Sigma^{-1}\P^1.
	\]
	As a result, the augmented $\E_\infty$-ring structure on $Q$ from Construction~\ref{cst:Q} induces a nonunital $\E_\infty$-ring structure on $\Sigma^{-1}\P^1$.
	The isomorphism~\eqref{eqn:G_m-decomposition} becomes a product decomposition
	\begin{equation}\label{eqn:G_m-nonunital}
	\G_\m= \Sigma^{-1}\P^1\times \A^1_0\times \A^1_\infty
	\end{equation}
	of nonunital $\E_\infty$-rings in $\MS_S$.
\end{construction}

\section{The Hopf map}

We define the \emph{Hopf map} $\eta\colon \1\to(\Sigma^{-1}\P^1)^{\otimes -1}$ so that 
\[
\eta\otimes\id_{(\Sigma^{-1}\P^1)^{\otimes 2}}\colon \Sigma^{-1}\P^1\otimes \Sigma^{-1}\P^1\to \Sigma^{-1}\P^1
\]
is the multiplication of the nonunital $\E_\infty$-ring structure from Construction \ref{cst:nonunital}.
In this section, we collect various facts about the Hopf map and prove in particular the ``easy'' Milnor–\allowbreak Witt relations in $\MS$.

Denote by $\epsilon\colon \1\to\1$ the Euler characteristic of the Tate circle $\Sigma^{-1}\P^1$, 
which is induced by 
the swap automorphism of $(\Sigma^{-1}\P^1)^{\otimes 2}$. 
	For a unit $a\in\scr O(S)^\times$, denote by $[a]\colon \1\to \Sigma^{-1}\P^1$ the composition
	\[
	\1\xrightarrow a \G_{\m+} \xrightarrow{r} \Sigma^{-1}\P^1,
	\]
	and by $\langle a\rangle\colon \1\to \1$ the multiplication by $a$ on $(\P^1,\infty)$.
	Note that $[1]=0$, $\langle 1\rangle=1$, and $\langle ab\rangle=\langle a\rangle\langle b\rangle$.

In the $\Z$-graded ring $\pi_0\Map(\1,(\Sigma^{-1}\P^1)^{\otimes *})$, if $\alpha$ has degree $m$ and $\beta$ has degree $n$, then
\[
\alpha\beta=\epsilon^{mn}\beta\alpha.
\]
In particular, $\epsilon$ and $\langle a\rangle$ are central. Since $\eta\otimes\id_{(\Sigma^{-1}\P^1)^{\otimes 2}}$ is a commutative pairing, we have
\begin{equation}
\label{eqn:epsilon}
\eta\epsilon = \epsilon\eta=\eta,
\end{equation}
which implies that $\eta$ is also central.

\begin{lemma}\label{lem:brackets}
	For any $a\in\scr O(S)^\times$, $\langle a\rangle =\eta[a]+1$.
\end{lemma}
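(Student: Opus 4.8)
The plan is to unwind the definition of $\langle a\rangle$ as multiplication by $a$ on $(\P^1,\infty)$ and compare it with the multiplicative structure on $\Sigma^{-1}\P^1$ coming from Construction~\ref{cst:nonunital}. The key point is that the self-map $\langle a\rangle$ of $\1$ is, after desuspension, controlled by how multiplication by $a$ interacts with the splitting $\G_\m = \Sigma^{-1}\P^1\times \A^1_0\times \A^1_\infty$ and with the basepoint-shift/multiplication coming from the group structure of $\G_\m$.

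First I would observe that multiplication by $a$ on $\P^1$ is the same, after applying $\Sigma^{-1}\fib(\ph\to\1)$, as the endomorphism of $\Sigma^{-1}\P^1$ induced by translation by the point $a\in\G_\m$ under the group law — more precisely, $\langle a\rangle$ desuspends to the composite $\Sigma^{-1}\P^1 \xrightarrow{\partial} \G_\m \xrightarrow{\mu(a,\ph)} \G_\m \xrightarrow{r} \Sigma^{-1}\P^1$, where $\mu$ is the multiplication of $\G_\m$ pointed at $1$. This is because scaling by $a$ on $\P^1$ restricts to translation by $a$ on $\A^1_\infty=\G_\m\cup\{\infty\}$ (and fixes $0,\infty$), and the splitting map $r$ is compatible with the covering $\P^1=\A^1_0\cup\A^1_\infty$. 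Then I would use the fact that the group multiplication $\mu\colon \G_\m\times\G_\m\to\G_\m$, transported through the decomposition~\eqref{eqn:G_m-nonunital} and the retraction $r$, realizes the nonunital $\E_\infty$-multiplication on $\Sigma^{-1}\P^1$ on the summand $\Sigma^{-1}\P^1\otimes\Sigma^{-1}\P^1$ — which is exactly what defines $\eta$ via $\eta\otimes\id = $ (that multiplication). Combining these, the composite $r\circ\mu(a,\ph)\circ\partial$ becomes, on the level of $\1\to\1$, the sum of the ``constant'' term (coming from the unit/basepoint $1$, which gives $\id_{\1}$) and the ``product with $[a]$'' term (which gives $\eta[a]$, since $[a] = r\circ a$ and pairing with it through the nonunital multiplication is $\eta[a]\cdot(\ph)$).

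More concretely, I would write $\mu(a,\ph) = \mu\circ(a\times\id)$ and precompose $\partial$, postcompose $r$; using that $a\colon \1\to\G_\m$ followed by $r$ is $[a]$, and that $\mu$ followed by $r$ on the $\Sigma^{-1}\P^1$-summand computes $\eta$-multiplication, one gets $\langle a\rangle = \eta\cdot[a]\cdot(\Sigma^{-1}\partial \text{ composed with } r) + (\text{unit contribution})$. The unit contribution: plugging the basepoint $1\in\G_\m$ into the first variable of $\mu$ gives the identity of $\G_\m$ (since $1$ is the unit), so $r\circ\mu(1,\ph)\circ\partial = r\circ\partial = \id_{\Sigma^{-1}\P^1}$, contributing $+1$; and the remaining term is genuinely $\eta[a]$ because $a = 1\cdot a$ in $\G_\m$ means $a$ differs from the basepoint exactly by $[a]$ in the (nonunital, i.e.\ augmentation-fiber) sense, and the nonunital multiplication is $\eta\otimes\id$. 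Desuspending once back from $\Sigma^{-1}\P^1$ to $\1$ (i.e.\ tensoring with $(\Sigma^{-1}\P^1)^{\otimes -1}$, or equivalently noting the computation already took place after one desuspension) yields $\langle a\rangle = \eta[a] + 1$ in $\pi_0\Map(\1,\1)$.

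\textbf{The main obstacle} I expect is bookkeeping the basepoints and the additive/nonunital structure carefully: the group law $\mu$ on $(\G_\m,1)$ is only a nonunital $\E_\infty$-map on $\G_\m = \Sigma^{-1}\P^1\times\A^1_0\times\A^1_\infty$ after passing to augmentation fibers, so I must track where the ``$+1$'' comes from (the unital completion $\G_{\m+} = Q\times_\1\A^1_{0+}\times_\1\A^1_{\infty+}$ of Construction~\ref{cst:Q}, whose $Q$-component $\partial\circ r$ has image the identity in $\pi_0\Map(\1,\1)$ by construction). Equivalently: one should phrase the whole argument at the level of $\G_{\m+}$ and its $\E_\infty$-decomposition~\eqref{eqn:G_m-decomposition}, where $\langle a\rangle$ lifts to multiplication by $a$ on $\P^1_+$, identify this lift with the $Q$-coordinate of $\mu(a,\ph)$, and then read off $\eta[a]+1$ by taking fibers. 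Making precise that ``scaling by $a$ on $\P^1$ = translation by $a$ on $\A^1_\infty$, which = the group law evaluated at $a$'' — and that this is compatible with the chosen retraction $r$ built from the two nullhomotopies of $\A^1_0\hookrightarrow\P^1$ and $\A^1_\infty\hookrightarrow\P^1$ — is the geometric heart of the computation; everything else is formal manipulation in the monoidal category.
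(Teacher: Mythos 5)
Your proposal is correct and is essentially the paper's own argument: the paper likewise identifies $\langle a\rangle$ with the restriction to the summand $\Sigma^{-1}\P^1$ of multiplication by $a$ on $\G_{\m+}$, uses that the multiplicative decomposition~\eqref{eqn:G_m-nonunital} kills the cross-terms $\A^1\otimes\Sigma^{-1}\P^1$, and reads off $\id+\eta$ applied to $(1+[a])\otimes\id$. The bookkeeping concerns you flag (basepoints, where the $+1$ comes from) are exactly the points the paper handles via Construction~\ref{cst:Q}.
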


\begin{proof}
	By~\eqref{eqn:G_m-nonunital}, the multiplication $m$ of $\G_{\m+}$ is zero on the summands $\A^1\otimes\Sigma^{-1}\P^1$.
	Thus, the restriction to $\Sigma^{-1}\P^1$ of the multiplication by $a$ on $\G_{\m+}$ is given by the composite
	\[
	\Sigma^{-1}\P^1 = \1\otimes\Sigma^{-1}\P^1 \xrightarrow{a\otimes \id} (\1\oplus \Sigma^{-1}\P^1)\otimes \Sigma^{-1}\P^1  \xrightarrow{m} \Sigma^{-1}\P^1.
	\]
	Since the last map $m$ is $\id_{\Sigma^{-1}\P^1}+\eta$, we obtain the desired formula.
\end{proof}

\begin{lemma}\label{lem:swap-GW}
	The swap automorphism of $\P^1\otimes\P^1$ is $\langle -1\rangle$. Equivalently, $\epsilon=-\langle -1\rangle$.
\end{lemma}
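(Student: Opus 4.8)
The plan is to separate the ``equivalently'' clause, which is pure suspension bookkeeping, from the substantive claim that the swap of $\P^1\otimes\P^1$ is $\langle-1\rangle$, which I would prove by a matrix computation reducing to the triviality of elementary matrices on motivic Thom spaces.

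First, for the equivalence: since $\P^1=\Sigma(\Sigma^{-1}\P^1)$ with $\Sigma=(-)\otimes S^1$ the simplicial suspension (and $S^1$, $\Sigma^{-1}\P^1$ both $\otimes$-invertible in $\MS_S$), there is a canonical identification $\P^1\otimes\P^1\cong(S^1)^{\otimes2}\otimes(\Sigma^{-1}\P^1)^{\otimes2}$ under which the swap $\tau_{\P^1,\P^1}$ corresponds to $\tau_{S^1,S^1}\otimes\tau_{\Sigma^{-1}\P^1,\Sigma^{-1}\P^1}$. By definition $\tau_{\Sigma^{-1}\P^1,\Sigma^{-1}\P^1}$ is $\epsilon$ (transported along the invertibility of $(\Sigma^{-1}\P^1)^{\otimes2}$), while $\tau_{S^1,S^1}=-\id$, as in any stable homotopy category. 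As $\epsilon$ and $-1$ are central in $\pi_0\Map(\1,\1)$, the equality $\tau_{\P^1,\P^1}=\langle-1\rangle$ is therefore equivalent to $-\epsilon=\langle-1\rangle$, i.e.\ to $\epsilon=-\langle-1\rangle$. It remains to establish $\tau_{\P^1,\P^1}=\langle-1\rangle$.

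For this I would use that the Thom-space construction $\scr E\mapsto\Th_S(\scr E)$ is symmetric monoidal from $(\Vect_S,\oplus)$ to $(\MS_S,\otimes)$ with $\Th_S(\scr O)=\P^1$, as in \cite{AHI,atiyah}. Under the resulting identification $\P^1\otimes\P^1=\Th_S(\scr O^{\oplus2})$, the swap $\tau_{\P^1,\P^1}$ is $\Th_S$ of the permutation matrix $P=\left(\begin{smallmatrix}0&1\\1&0\end{smallmatrix}\right)$ (the symmetry of $\scr O\oplus\scr O$), while $\langle-1\rangle\otimes\id_{\P^1\otimes\P^1}$ is $\Th_S$ of $D=\left(\begin{smallmatrix}-1&0\\0&1\end{smallmatrix}\right)$, because $\langle-1\rangle\otimes\id_{\P^1}$ — multiplication by $-1$ on $(\P^1,\infty)=\Th_S(\scr O)$ — is $\Th_S(-\id_{\scr O})$. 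Writing
\[
P=D\cdot W,\qquad W=\begin{pmatrix}0&-1\\1&0\end{pmatrix}=\begin{pmatrix}1&-1\\0&1\end{pmatrix}\begin{pmatrix}1&0\\1&1\end{pmatrix}\begin{pmatrix}1&-1\\0&1\end{pmatrix}\in\SL_2(\Z),
\]
and using functoriality of $\Th_S$ in isomorphisms, it suffices to show that every elementary matrix in $\GL_2(\scr O(S))$ induces the identity automorphism of $\Th_S(\scr O^{\oplus2})$ in $\MS_S$.

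This last statement is the heart of the proof and the step I expect to be delicate: classically it follows from $\A^1$-invariance via the homotopy $t\mapsto e_{ij}(tc)$, which is unavailable in $\MS$. I would instead deduce it from the $\P$-homotopy invariance of $\MS$, i.e.\ from \cite[Theorem 4.1(ii)]{AHI} in the form recalled in Remark~\ref{rmk:equivariance}. Concretely, realizing $\Th_S(\scr O^{\oplus2})=\P_S(\scr O^{\oplus2}\oplus\scr O)/\P_S(\scr O^{\oplus2})$, the shear $\left(\begin{smallmatrix}1&c\\0&1\end{smallmatrix}\right)$ acts on the affine chart $\A^2_S$ by translating the first coordinate by $c$ times the second, that is, by a \emph{linear} form in the fibre coordinate of the projection $\A^2_S\to\A^1_S$; the homotopy $h(\sigma)$ attached to this linear form should produce a $\P$-homotopy, not merely an $\A^1$-homotopy, between $\Th_S\!\left(\begin{smallmatrix}1&c\\0&1\end{smallmatrix}\right)$ and the identity, the transpose elementary matrices being handled by the symmetric argument. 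Plugging the factorization of $W$ back in then yields $\Th_S(P)=\Th_S(D)=\langle-1\rangle\otimes\id$, hence $\tau_{\P^1,\P^1}=\langle-1\rangle$ and $\epsilon=-\langle-1\rangle$. Everything except the elementary-matrix triviality is formal; that step is where I would expect to spend effort pinning down the precise $\P$-homotopy.
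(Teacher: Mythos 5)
Your formal reductions are correct and overlap with the paper's setup: the suspension bookkeeping identifying $\tau_{\P^1,\P^1}=\langle-1\rangle$ with $\epsilon=-\langle-1\rangle$, the use of the symmetric monoidal Thom space construction to rewrite both automorphisms as $\Th_S$ of explicit integer matrices, and the factorization $P=DW$ with $W\in\SL_2(\Z)$ a product of elementary matrices. Where you diverge from the paper is exactly at the step you flag as delicate, and the paper's point is that this step can be avoided altogether: it stabilizes to $3\times 3$ matrices, observes that the composite of the swap with $\langle-1\rangle$ is $\Th_S$ of an element of $\SL_3(\Z)$, and concludes because $\SL_3(\Z)$ is perfect while the automorphism group of the $\otimes$-invertible object $\P^3/\P^2$ in $\h\MS_S$ is abelian (it is the unit group of the commutative ring $\pi_0\Map(\1,\1)$). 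This uses no geometric input beyond the symmetric monoidality of $\Th_S$, and it is also why three tensor factors appear: $\SL_2(\Z)$ is not perfect, so your two-factor reduction forces you to actually prove something about elementary matrices.

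That remaining step, as sketched, has a genuine gap. The homotopy $h(\sigma)$ of $\P$-homotopy invariance trivializes the section $\sigma$ only as a map into the projective completion $\P_X(\scr E\oplus\scr O)$, whereas converting a nullhomotopy of $\sigma$ into a homotopy from the shear to the identity requires precomposing the translation action with that nullhomotopy --- and translation is an action of $\V_X(\scr E)$ that does not extend over $\P_X(\scr E\oplus\scr O)$, so $h(\sigma)$ cannot be transported through it. Even granting, as in Lemma~\ref{lem:E2}, that the shear is trivialized on $\P^2_+$ and on $\P^1_+$ separately, descending to the quotient $\P^2/\P^1=\Th_S(\scr O^{\oplus 2})$ needs those two homotopies to be compatible, a coherence your sketch does not supply. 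The argument is nonetheless salvageable without any of this, because the matrices you need are integral: $W$ stabilizes into the perfect group $\SL_3(\Z)$, so $\Th_S(W)\otimes\id_{\P^1}$, hence $\Th_S(W)$, is the identity for purely formal reasons --- but at that point you have reproduced the paper's proof.
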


\begin{proof}
	 Recall that the Thom space construction $\scr E\mapsto\Th_S(\scr E)$ is functorial and symmetric monoidal in $\scr E$ \cite[\sect 3]{AHI}. Therefore, under the isomorphism
	 \[
	 \Th(\scr O)\otimes\Th(\scr O)\otimes\Th(\scr O) \simeq \Th(\scr O\oplus\scr O\oplus\scr O),
	 \]
	the swap automorphism and $\langle -1\rangle$ are identified with the automorphisms of $\Th(\scr O\oplus\scr O\oplus\scr O)$ given by the matrices
	\[
	\begin{pmatrix}
		0 & 1 & 0 \\
		1 & 0 & 0 \\
		0 & 0 & 1
	\end{pmatrix}
	\quad\text{and}\quad
	\begin{pmatrix}
		-1 & 0 & 0 \\
		0 & 1 & 0 \\
		0 & 0 & 1
	\end{pmatrix}.
	\] 
	Since $\SL_3(\Z)$ is perfect, it acts trivially on the $\otimes$-invertible object $\P^3/\P^2$ in $\h\MS_S$. In particular, the product of these two matrices acts as the identity. 
\end{proof}

\begin{theorem}[The Hopf relations]
	\label{thm:easy-MW}
	For any derived scheme $S$, the following relations hold in the graded ring $\pi_0\Map(\1_S,(\Sigma^{-1}\P^1)^{\otimes *})$:
	\begin{enumerate}
		\item $[ab]=[a]+[b]+\eta[a][b]$ for all $a,b\in \scr O(S)^\times$.
		\item $\eta[a]=[a]\eta$ for all $a\in \scr O(S)^\times$.
		\item $\eta h=0$, where $h=\eta[-1]+2$.
	\end{enumerate}
\end{theorem}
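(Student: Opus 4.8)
The plan is to deduce all three relations from the nonunital $\E_\infty$-ring structure on $\Sigma^{-1}\P^1$, in a way that parallels Morel's arguments but uses only the multiplicative decomposition~\eqref{eqn:G_m-nonunital} of $\G_\m$ rather than $\A^1$-invariance. First I would record the relation $\langle a\rangle=\eta[a]+1$ from Lemma~\ref{lem:brackets}, and more generally reinterpret $[a]$ as the composite $\1\xrightarrow{a}\G_{\m+}\xrightarrow{r}\Sigma^{-1}\P^1$, so that products $[a][b]$ in the graded ring are computed by the multiplication of $\Sigma^{-1}\P^1$, i.e.\ by $\eta\otimes\id$ applied to $[a]\otimes[b]$.

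For relation~(i): since $m\colon \G_{\m+}\otimes\G_{\m+}\to\G_{\m+}$ is the $\E_\infty$-multiplication and $r$ is an $\E_\infty$-map (Construction~\ref{cst:Q}), the point $ab\in\G_\m$ is $m\circ(a\otimes b)$, hence $[ab]=r\circ m\circ(a\otimes b)$. I would then expand $a\otimes b$ through the decomposition $\G_{\m+}=\1\oplus\Sigma^{-1}\P^1\oplus(\text{contractible})$: writing $a\mapsto 1+[a]$ and $b\mapsto 1+[b]$ under $r$, and using that $m$ restricted to the $\Sigma^{-1}\P^1$-summand is $\id+\eta$ while $m$ kills the products involving the $\A^1$-summands (as in the proof of Lemma~\ref{lem:brackets}), the cross terms give exactly $[a]+[b]+\eta[a][b]$. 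This is essentially bilinearity of $m$ together with the computation that the multiplication on the Tate summand is $\id_{\Sigma^{-1}\P^1}+\eta$.

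For relation~(ii): $\eta[a]=[a]\eta$ should follow from the graded-commutativity rule $\alpha\beta=\epsilon^{mn}\beta\alpha$ together with $\eta\epsilon=\eta$ from~\eqref{eqn:epsilon}. Indeed $\eta$ has degree $-1$ and $[a]$ has degree $1$, so $\eta[a]=\epsilon^{-1}[a]\eta$; multiplying the identity $\eta\epsilon=\eta$ on the right shows $\epsilon$ acts invertibly and trivially on anything in the $\eta$-divisible part, giving $\eta[a]=[a]\eta$. Alternatively, and perhaps more cleanly, this is immediate from the fact that $\eta$ is central, which was already observed right after~\eqref{eqn:epsilon}; so (ii) may reduce to a citation of that observation.

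For relation~(iii): the standard topological-type argument is $\eta h = \eta(\eta[-1]+2)$, and one wants to see this is zero. I would combine Lemma~\ref{lem:brackets} (so $h=\langle -1\rangle+1$) with Lemma~\ref{lem:swap-GW} (so $\epsilon=-\langle -1\rangle$, hence $h=1-\epsilon$) to rewrite $\eta h=\eta(1-\epsilon)=\eta-\eta\epsilon=0$ by~\eqref{eqn:epsilon}. The main obstacle is making sure that $h=\eta[-1]+2$ really equals $1-\epsilon$ on the nose, i.e.\ carefully tracking the identifications $\langle -1\rangle = \eta[-1]+1$ and $\epsilon=-\langle -1\rangle$; once those two lemmas are in hand, (iii) is a one-line consequence of $\eta\epsilon=\eta$. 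Overall the delicate point throughout is bookkeeping the $\E_\infty$-structure maps on the direct-sum decomposition of $\G_{\m+}$ — in particular that the multiplication is additive in each variable and annihilates mixed terms with the $\A^1$-summands — rather than any genuinely new geometric input.
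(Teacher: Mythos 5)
Your proposal is correct and follows essentially the same route as the paper: (i) is proved by expanding $m\circ(a\otimes b)$ through the decomposition $\G_{\m+}=\1\oplus\Sigma^{-1}\P^1\oplus(\A^1_0\oplus\A^1_\infty)$ and using that $m$ kills the $\A^1$-summands while restricting to $\id+\eta$ on the Tate part, (ii) is the centrality of $\eta$ already noted after~\eqref{eqn:epsilon}, and (iii) is the one-line combination of $\eta\epsilon=\eta$, $\epsilon=-\langle-1\rangle$, and $\langle-1\rangle=\eta[-1]+1$. No gaps.
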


\begin{proof}
	(i) Since the multiplication $m$ of $\G_{\m+}$ is zero on the summands $\A^1\otimes\Sigma^{-1}\P^1$, by~\eqref{eqn:G_m-nonunital}, the map $[ab]$ decomposes as
	\[
	\1\otimes\1\xrightarrow{a\otimes b} (\1\oplus\Sigma^{-1}\P^1)\otimes (\1\oplus\Sigma^{-1}\P^1) \xrightarrow{m} \1\oplus \Sigma^{-1}\P^1 \to \Sigma^{-1}\P^1.
	\]
	The desired formula follows as $m=\id_\1+\id_{\Sigma^{-1}\P^1}+\id_{\Sigma^{-1}\P^1}+\eta$.
	
	(ii) As noted above, the element $\eta$ is central.
	
	(iii) By~\eqref{eqn:epsilon} and Lemma~\ref{lem:swap-GW}, we have $\eta=-\eta\langle -1\rangle$. By Lemma~\ref{lem:brackets}, we have $\langle -1\rangle=\eta[-1]+1$. Combining the two formulas gives the desired relation.
\end{proof}

\begin{corollary}\label{cor:GW2}
	For every $a\in\scr O(S)^\times$, we have
	$\langle a\rangle+\langle -a\rangle=\langle 1\rangle+\langle-1\rangle$ in $\pi_0\Map(\1_S,\1_S)$.
\end{corollary}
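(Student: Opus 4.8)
The plan is to reduce everything to the relations already established in Theorem~\ref{thm:easy-MW} and Lemma~\ref{lem:brackets}. First I would rewrite both sides using Lemma~\ref{lem:brackets}, which gives $\langle a\rangle = \eta[a]+1$ for every unit $a$. Since $-a$ is again a unit, this yields
\[
\langle a\rangle + \langle -a\rangle = \eta\bigl([a]+[-a]\bigr) + 2, \qquad \langle 1\rangle + \langle -1\rangle = \eta[-1] + 2,
\]
so the claim is equivalent to the identity $\eta\bigl([a]+[-a]-[-1]\bigr)=0$ in $\pi_0\Map(\1_S,(\Sigma^{-1}\P^1)^{\otimes -1})$.

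Next I would expand $[-a]=[(-1)\cdot a]$ using the logarithmic relation, Theorem~\ref{thm:easy-MW}(i): $[-a]=[-1]+[a]+\eta[-1][a]$. Substituting, the $[-1]$ terms cancel and we are left with
\[
[a]+[-a]-[-1] = 2[a] + \eta[-1][a] = \bigl(\eta[-1]+2\bigr)[a] = h[a],
\]
where $h=\eta[-1]+2$ as in Theorem~\ref{thm:easy-MW}(iii); here I use that $\eta$ is central so that the scalars $2$ and $\eta[-1]$ can be collected on the left. Multiplying by $\eta$ and invoking $\eta h = 0$ from Theorem~\ref{thm:easy-MW}(iii) gives $\eta\bigl([a]+[-a]-[-1]\bigr)=\eta h[a]=0$, which is exactly what was needed.

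There is really no hard part here: the corollary is a purely formal consequence of the Hopf relations together with the identity $\langle a\rangle=\eta[a]+1$, and the only things to be slightly careful about are the centrality of $\eta$ (already noted before Lemma~\ref{lem:brackets}) and bookkeeping of signs when expanding $[-a]$. I would present the computation in the three displayed steps above and conclude.
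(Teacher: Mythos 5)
Your proof is correct and follows essentially the same route as the paper: both reduce the statement via Lemma~\ref{lem:brackets} to the vanishing of $\eta h$ acting on $[a]$, invoking Theorem~\ref{thm:easy-MW}(ii,iii). The only cosmetic difference is that you expand $[-a]$ with the relation $[ab]=[a]+[b]+\eta[a][b]$, whereas the paper factors $\langle a\rangle+\langle -a\rangle=\langle a\rangle(1+\langle -1\rangle)$ using multiplicativity of $\langle\cdot\rangle$; both land on the identity $\eta[a]h=0$.
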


\begin{proof}
	By Lemma~\ref{lem:brackets}, this is equivalent to $\eta [a](\eta[-1]+2)=0$, which follows from Theorem \ref{thm:easy-MW}(ii,iii).
\end{proof}

Finally, we provide a more geometric description of the Hopf map (which is not used in the sequel):

\begin{proposition}\label{prop:hopf}
	Under the canonical isomorphism $\P^1\otimes\P^1\simeq \P^2/\P^1$, the Hopf map $\eta$ is the connecting map of the cofiber sequence
	$\P^1\to\P^2\to \P^2/\P^1$.
\end{proposition}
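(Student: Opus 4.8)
The plan is to compare two descriptions of the same connecting map. On the one hand, $\eta$ is \emph{defined} via the nonunital $\E_\infty$-multiplication on $\Sigma^{-1}\P^1$ coming from the group law on $\G_\m$ (Constructions~\ref{cst:Q} and~\ref{cst:nonunital}); on the other hand, the connecting map $\delta$ of the cofiber sequence $\P^1\to\P^2\to\P^2/\P^1$, after the identification $\P^2/\P^1\simeq\P^1\otimes\P^1$, is a map $\Sigma^{-1}(\P^1\otimes\P^1)\to\P^1$, i.e.\ an element of $\pi_0\Map(\1,(\Sigma^{-1}\P^1)^{\otimes-1})$ once we desuspend by $\Sigma^{-1}\P^1$ twice. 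So the goal is to identify $\delta$ with $\eta$ as such elements.

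First I would pin down the identification $\P^2/\P^1\simeq \P^1\otimes\P^1=\Th(\scr O\oplus\scr O)$: this is the standard Thom-space computation, since the normal bundle to $\P^1=\P^2\setminus\{\text{pt}\}\supset$ is $\scr O(1)$ over $\P^1$ restricted away from a point — more carefully, $\P^2/\P^1$ is the Thom space of $\scr O(1)$ on $\P^1$ pulled back to... let me instead use that $\P^n/\P^{n-1}\simeq \Th(\scr O_{\P^0}^{\oplus n})=(\P^1)^{\otimes n}$ in $\MS$, a purity/blowup-excision statement already available in this setting; for $n=2$ this gives $\P^2/\P^1\simeq\P^1\otimes\P^1$. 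Next, I would realize $\G_\m$ inside this picture: the covering $\P^1=\A^1_0\cup\A^1_\infty$ with intersection $\G_\m$ gives, after smashing, a Mayer–Vietoris square whose connecting map is essentially $\partial\colon\Sigma^{-1}\P^1\to\G_\m$ from Section~\ref{sec:Gm}. The key geometric input is that the attaching map of the top cell of $\P^2$ — equivalently the clutching data of $\P^2=\A^2\cup(\P^1\times\{\infty\}\text{-neighborhood})$, or of $\scr O(-1)$ on $\P^1$ — is built from the multiplication map of $\G_\m$: concretely, $\P^2$ minus a point deformation-retracts (is $\P$-homotopy equivalent, using $\P$-homotopy invariance of bundles) onto $\P^1$, and the gluing function on the overlap $\G_\m\times\A^1$ is $(u,t)\mapsto ut$, i.e.\ multiplication. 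Unwinding the cofiber sequence $\P^1\to\P^2\to\P^2/\P^1$ through this gluing description expresses $\delta$ as a composite $\Sigma^{-1}(\P^1\otimes\P^1)\to\Sigma^{-1}\P^1\otimes\G_\m\xrightarrow{m}\P^1$, where the first map uses $\partial$ on one factor, and $m$ is the $\G_\m$-multiplication.

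Then I would compare this with the definition of $\eta$. By Construction~\ref{cst:nonunital}, the nonunital multiplication on $\Sigma^{-1}\P^1$ is the restriction of $m\colon\G_\m\otimes\G_\m\to\G_\m$ (well, of $m$ on $\G_{\m+}$) along the summand inclusions $\partial$, followed by the retraction $r$. Using the product decomposition~\eqref{eqn:G_m-nonunital} $\G_\m=\Sigma^{-1}\P^1\times\A^1_0\times\A^1_\infty$ and the vanishing of $m$ on the mixed $\A^1\otimes\Sigma^{-1}\P^1$ summands (the fact already exploited in the proof of Lemma~\ref{lem:brackets} and Theorem~\ref{thm:easy-MW}), the composite $\Sigma^{-1}\P^1\otimes\G_\m\xrightarrow{m}\G_\m\xrightarrow{r}\Sigma^{-1}\P^1$ agrees on the relevant summand with $\eta\otimes\id$ up to the identity term; matching the degree-$(-1)$ part gives $\delta=\eta$. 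The bookkeeping is purely formal once the geometric identification of the attaching map with $\G_\m$-multiplication is in hand.

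I expect the main obstacle to be exactly that geometric identification: making precise — in $\MS$, without $\A^1$-invariance but with $\P$-homotopy invariance of vector bundles — that the connecting map of $\P^1\to\P^2\to\P^2/\P^1$ is, under purity, literally the $\partial$-of-the-Mayer–Vietoris-square composed with the group multiplication $m$ of $\G_\m$. This is a clutching-function computation for the line bundle $\scr O(1)$ on $\P^1$ (whose Thom space is $\P^2/\P^1$) and one must check the two sections/trivializations over $\A^1_0$ and $\A^1_\infty$ differ by multiplication, and that the equivariant $\P$-homotopies of Remark~\ref{rmk:equivariance} let one contract the $\A^1$-summands compatibly with the cofiber sequence. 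Everything downstream — matching with the defining formula for $\eta$ — is then an exercise in tracking the decomposition~\eqref{eqn:G_m-nonunital}.
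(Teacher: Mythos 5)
Your proposal follows essentially the same route as the paper: identify $\eta$ with the $\G_\m$-action on $\P^1$ restricted to the Tate summand $\Sigma^{-1}\P^1$, realize $\P^2$ via elementary blowup excision as $\Th_{\P^1}(\scr O(-1))$ glued from trivial pieces by the multiplication (shear) map, and unwind the cofiber sequence $\P^1\to\P^2\to\P^2/\P^1$ accordingly. The obstacle you flag at the end is exactly where the paper's proof spends its effort --- a prism diagram extracting $\eta$ as the fiber of $\P^1\to\Th_{\P^1}(\scr O(-1))$ from the shear automorphism of $\G_{\m+}\otimes\P^1$, followed by a zigzag through the blowup of $\P^1\times\P^1$ at $(0,\infty)$ to verify that the resulting identification of $\P^2/\P^1$ with $\P^1\otimes\P^1$ is the canonical one --- so your plan is correct and matches the paper, but that final compatibility check is the substance of the argument and remains to be carried out.
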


\begin{proof}
	Since the retraction $r\colon \G_{\m}\to \Sigma^{-1}\P^1$ is $\G_{\m}$-equivariant, $\eta$ is induced by the action of $\G_{\m}$ on $\P^1$:
	\[
	\eta\colon \Sigma^{-1}\P^1\otimes \P^1 \into \G_{\m+}\otimes\P^1\xrightarrow{\mathrm{act}}\P^1.
	\]
	Furthermore, by~\eqref{eqn:G_m-nonunital}, there is a commuting triangle
	\[
	\begin{tikzcd}
		\G_{\m+} \otimes \P^1 \ar{r}{\mathrm{act}} \ar[twoheadrightarrow]{d} & \P^1\rlap. \\
		(\1\oplus\Sigma^{-1}\P^1)\otimes\P^1 \ar[shorten >=5pt,xshift=5pt]{ur}[swap]{\id+\eta}
	\end{tikzcd}
	\]
	Hence, in the commutative diagram
	\[
	\begin{tikzcd}[column sep=1.5cm,/tikz/column 2/.style={column sep=1cm}]
		\G_{\m+}\otimes \P^1 \ar{r}{\delta\otimes\id} \ar[twoheadrightarrow]{d} & \G_{\m+}\otimes\G_{\m+}\otimes \P^1 \ar[twoheadrightarrow]{r} \ar[twoheadrightarrow]{d} & \G_{\m+}\otimes(\1\oplus\Sigma^{-1}\P^1)\otimes \P^1 \ar{r}{\id\otimes(\id+\eta)} \ar[twoheadrightarrow]{d} & \G_{\m+}\otimes \P^1 \ar[twoheadrightarrow]{d} \\
		\A^1_{0+}\otimes \P^1 \ar{r}{\delta\otimes\id} & \A^1_{0+}\otimes\A^1_{0+}\otimes \P^1 \ar[twoheadrightarrow]{r} & \A^1_{0+}\otimes \1\otimes \P^1 \ar{r}{\id\otimes \id} & \A^1_{0+}\otimes \P^1\rlap,
	\end{tikzcd}
	\]
	where $\delta$ is the diagonal, the top composite is the shear automorphism, while the bottom composite is the identity. This shows that the composite
	\[
	\A^1_{0+}\otimes\P^1\into \G_{\m+}\otimes \P^1 \xrightarrow{\mathrm{shear}} \G_{\m+}\otimes \P^1\onto \A^1_{0+}\otimes\P^1
	\]
	is the identity.
	
	By elementary blowup excision, the linear embedding $\P^1\to\P^2$ can be identified with the inclusion of the $0$-fiber $\P^1\to \Th_{\P^1}(\scr O(-1))$. 
	Consider the following diagram:
	\[
	\begin{tikzcd}[column sep=0cm,row sep=0.5cm]
		\Sigma^{-1}\Th_{\P^1}(\scr O(-1))/\P^1 \ar{rr} \ar{dr}[sloped]{\sim}[swap]{\alpha} \ar{dddd} & & (\A^1_0\oplus\Sigma^{-1}\P^1)\otimes\P^1 \ar{rr} \ar{dl}[yshift=1]{\mathrm{proj}} \ar{dd} & & 0 \ar{dd} \\ 
		 & \Sigma^{-1}\P^1\otimes\P^1  \\
		& & \G_{\m+}\otimes \P^1 \ar{rr}{\mathrm{inc}} \ar{dd}{\mathrm{shear}} & & \A^1_{\infty+}\otimes\P^1 \ar{dd} \\
		& \\
		\P^1 \ar{rr} \ar[equal]{dr} & & \A^1_{0+}\otimes\P^1 \ar{rr} \ar{dl} & & \Th_{\P^1}(\scr O(-1))\rlap. \\
		& \P^1 \ar[<-,crossing over]{uuuu}{\eta} \ar[<-,crossing over]{uuur}[near end]{\mathrm{act}}
	\end{tikzcd}
	\]
	The two squares on the right are cartesian, as is the back face of the prism.
	By the above observation about the shear map, the matrix of the middle column has the form
	\[
	\begin{pmatrix}
		\id & * \\ 0 & \eta
	\end{pmatrix}\colon(\A^1_0\otimes\P^1)\oplus (\Sigma^{-1}\P^1\otimes\P^1) \to (\A^1_{0}\otimes\P^1)\oplus \P^1,
	\]
	 so that the right face of the prism commutes and is cartesian. We therefore obtain the isomorphism $\alpha$ exhibiting the Hopf map as the fiber of $\P^1\to \Th_{\P^1}(\scr O(-1))$.
	
	To conclude, we must show that $\alpha$ is the canonical isomorphism. The latter is given by the zigzag of isomorphisms
	\[
	\Th_{\P^1}(\scr O(-1))/\P^1\leftarrow B/\partial B \rightarrow \P^1\otimes\P^1,
	\]
	where $B\to \P_{\P^1}(\scr O(-1)\oplus\scr O)$ is the blowup at $(0,0)$ and $B\to \P^1\times\P^1$ is the blowup at $(0,\infty)$.
	This zigzag extends to isomorphisms of cartesian squares
	\[
	\begin{tikzcd}[column sep=0.6cm]
		\G_{\m+}\otimes\P^1 \ar{r}{\mathrm{inc}} \ar{d}[swap]{\mathrm{shear}} & \A^1_{\infty+}\otimes\P^1 \ar{d} \ar[phantom]{dr}[description]{\Longleftarrow} & \G_{\m+}\otimes\P^1 \ar{r}{\mathrm{inc}} \ar{d} & \A^1_{\infty+}\otimes\P^1 \ar{d} \ar[phantom]{dr}[description]{\Longrightarrow} & \G_{\m+}\otimes\P^1 \ar{r}{\mathrm{inc}} \ar{d}[swap]{\mathrm{inc}} & \A^1_{\infty+}\otimes\P^1 \ar{d} \\
		\A^1_0\otimes\P^1 \ar{r} & \Th_{\P^1}(\scr O(-1))/\P^1 & B_0/\partial B_0 \ar{r} & B/\partial B & \A^1_0\otimes\P^1 \ar{r} & \P^1\otimes\P^1\rlap,
	\end{tikzcd}
	\]
	where the isomorphisms between the upper rows are identities. Passing to fibers, we deduce that the zigzag of isomorphisms between $\Sigma^{-1}\Th_{\P^1}(\scr O(-1))/\P^1$ and $\Sigma^{-1}\P^1\otimes\P^1$ identifies $\alpha$ with the identity, i.e., it coincides with $\alpha$.
\end{proof}

\section{The Steinberg relation}

The \emph{Steinberg embedding} is the pointed map
\[
\mathrm{st}\colon (\A^1-\{0,1\})_+\to \G_\m\times\G_\m,\quad a\mapsto (a,1-a).
\]
Here, $\G_\m$ is pointed at $1$, so that $\mathrm{st}$ sends the base point to $(1,1)$. 

\begin{theorem}[The Steinberg relation]
	\label{thm:steinberg}
	For any derived scheme $S$, the composite
	\[
	(\A^1-\{0,1\})_+\xrightarrow{\mathrm{st}} \G_\m\wedge\G_\m \xrightarrow{r\wedge r} \Sigma^{-2}(\P^1\wedge\P^1)
	\]
	is nullhomotopic in $\MS_S$. 
	Hence, for any $a\in\scr O(S)^\times$ such that $1-a\in\scr O(S)^\times$, we have 
	$[a][1-a]=0$
	in $\pi_0\Map(\1_S,(\Sigma^{-1}\P^1)^{\otimes 2})$.
\end{theorem}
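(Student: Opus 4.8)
The plan is to reduce the Steinberg relation to a statement about an explicit algebraic family, exactly as in Morel's original argument, and then to recognize that the needed nullhomotopy comes from $\P$-homotopy invariance applied to a suitable vector bundle. The key geometric input is the following classical observation: the Steinberg embedding $a\mapsto (a,1-a)$ from $\A^1-\{0,1\}$ into $\G_\m\times\G_\m$ extends over a larger scheme on which the two coordinates can be ``moved to infinity'' one at a time. Concretely, I would consider the closed subscheme $Z\subset \P^1\times\P^1$ (or $\A^2$) cut out by $x+y=1$, i.e. the graph of $a\mapsto 1-a$, which meets the ``axes'' $\{x=0\},\{y=0\},\{x=\infty\},\{y=\infty\}$ in a controlled way. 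Since the composite in the statement first projects $\G_\m\wedge\G_\m$ to $\Sigma^{-2}(\P^1\wedge\P^1)$ via $r\wedge r$, and $r$ kills the $\A^1_0$ and $\A^1_\infty$ summands by Construction~\ref{cst:Q}, it suffices to produce a nullhomotopy after smashing with $\P^1\wedge\P^1$; the point is that $r$ makes the map insensitive to what happens near the coordinate axes.

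The main steps, in order, would be: (1) Rewrite the composite using the decomposition \eqref{eqn:G_m-nonunital}: since $r$ is a retraction onto $\Sigma^{-1}\P^1$ killing $\A^1_0\oplus\A^1_\infty$, the map factors through a map $(\A^1-\{0,1\})_+\to (\P^1/\A^1_\infty)\wedge(\P^1/\A^1_\infty)$, i.e. through the ``$\P^1$ relative to one of its affine charts'' in each variable — equivalently through $\Th$ of line bundles. (2) Identify the target with a Thom space: as in the proof of Proposition~\ref{prop:hopf}, $\P^1\simeq \Th_{\mathrm{pt}}(\scr O)$ and $\P^1/\A^1_\infty$ is again a Thom space, so $\P^1\wedge\P^1 \simeq \Th(\scr O^{\oplus 2})$ with the relevant trivializations. (3) Produce the nullhomotopy geometrically: the Steinberg subscheme, after completing to a proper family and blowing up the intersection points with the axes, extends to a map out of a scheme $X$ that contracts onto a point via a $\P$-homotopy built from a section of a rank-$2$ bundle — here I would use Remark~\ref{rmk:equivariance} / \cite[Theorem 4.1]{AHI} to turn a linear map $\scr E\to\scr O$ into the required homotopy between the Steinberg map and the zero map. (4) Deduce the element-level statement $[a][1-a]=0$ by restricting the (pointed) nullhomotopic map along the point $a\colon S\to \A^1-\{0,1\}$, using $[a]=r\circ a$ and compatibility of smash products with the maps $r$.

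I expect the main obstacle to be step (3): in stable $\A^1$-homotopy theory one has a clean geometric witness for the Steinberg relation (e.g. via the scheme $\{(x,y,z): xy+z=1\}$ or via Morel's computation with $\A^1$-contractible loci), but here $\A^1$-invariance is unavailable, so the nullhomotopy must instead be engineered out of $\P$-homotopy invariance — that is, out of linear contractions of vector bundles — after suitably compactifying and blowing up so that the Steinberg locus sits inside the zero section of a bundle. Making the combinatorics of the blowups match the canonical trivializations of the Thom spaces (so that the resulting nullhomotopy lands on the \emph{stated} composite $r\wedge r\circ \mathrm{st}$ and not merely on something abstractly equivalent to it) is the delicate bookkeeping, entirely analogous to the verification that $\alpha$ is ``the canonical isomorphism'' at the end of the proof of Proposition~\ref{prop:hopf}. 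Everything else — the reduction via \eqref{eqn:G_m-nonunital}, the passage from a nullhomotopy of pointed maps of spaces to the vanishing of the product of the two brackets — is formal.
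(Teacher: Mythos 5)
Your overall geometric setup is on the right track --- the paper also compactifies the Steinberg locus to the line $Z=\{x+y=1\}\subset\P^1\times\P^1$ through $(1,0)$, $(0,1)$, $(\infty,\infty)$ and exploits the fact that $r$ kills the affine summands of $\G_\m$ --- but the step you yourself flag as ``the main obstacle'' is the entire content of the theorem, and the mechanism you propose for it does not work. You cannot engineer the nullhomotopy out of $\P$-homotopy invariance (linear contractions of vector bundles) alone: the Steinberg line does not sit inside $\P^1\wedge\P^1$ as the zero section of a bundle in any way that a linear homotopy could contract, and no amount of blowing up changes that. What the paper actually uses in place of the purity isomorphisms of $\A^1$-homotopy theory is Tang's $\P^1$-motivic Gysin maps for closed immersions in $\Sm_S$, together with two nontrivial inputs your sketch omits entirely: (a) a factorization lemma (Lemma~\ref{lem:gys-factorization}) showing that $r\wedge r$ on $(\A^1\wedge\A^1)/(\G_\m\wedge\G_\m)$ factors through the Gysin map of the coordinate cross $X\vee Y\subset\A^2$, and (b) Tang's refined normalization (Proposition~\ref{prop:normalization}) identifying the Gysin map of the zero section of $\V(\scr E)$ with the explicit map coming from the projection $\P(\scr E\oplus\scr O)-0\to\P(\scr E)$, which is what lets one match the Gysin maps against the specific composite $r\wedge r\circ\mathrm{st}$ rather than ``something abstractly equivalent to it.'' Once the map is pushed through the Gysin map of the cross, the nullhomotopy near $(0,0)$ becomes trivial because the normal-bundle data factors through the points $x$ and $y$.

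A second gap: because $Z$ passes through $(\infty,\infty)$, the argument cannot be run in $\A^2$ alone. The paper works in $P=(\P^1\times\P^1)-\{(0,\infty),(\infty,0)\}$, splits the suspended Steinberg map Zariski-locally into a contribution near $(0,0)$ and one near $(\infty,\infty)$, and the latter requires a genuinely separate argument: the induced map $\Th_z(\scr N_{z/Z})\to\Sigma^{-1}(\P^1\wedge\P^1)$ is \emph{not} nullhomotopic by itself, and one must instead show that the composite $\Sigma((Z-z)_+)\to Z/(Z-z)\to\Th_z(\scr N_{z/Z})$ dies, which depends on an asymmetric choice in the very first step --- one must contract $Z-z$ to the point $y$ (i.e., contract the complement of the point at infinity), not $Z-x$ to $y$; the paper explicitly notes the proof fails with the other choice. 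Your proposal is symmetric in the two ends of $Z$ and gives no indication of this constraint. So while the reduction in your steps (1), (2) and (4) is essentially correct and formal, step (3) as written is not a proof strategy but a restatement of the problem, and filling it requires the Gysin-map machinery and the asymmetric contraction that your outline does not contain.
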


\begin{corollary}\label{cor:steinberg}
	Let $a,b\in\scr O(S)^\times$ be units such that $a+b\in \scr O(S)^\times$. Then we have
	\[
	\langle a\rangle +\langle b\rangle = \langle a+b\rangle + \langle \frac{ab}{a+b}\rangle
	\]
	in $\pi_0\Map(\1_S,\1_S)$.
\end{corollary}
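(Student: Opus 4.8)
The plan is to deduce the identity from the Steinberg relation (Theorem~\ref{thm:steinberg}), the point being that the two sides differ only by a multiple of $[x][1-x]$ for a suitable unit $x$. First I would use Lemma~\ref{lem:brackets} to replace each $\langle c\rangle$ by $\eta[c]+1$; the four additive constants then cancel in pairs, so the asserted equality is equivalent to
\[
\eta\bigl([a]+[b]\bigr)=\eta\bigl([a+b]+[\tfrac{ab}{a+b}]\bigr)
\]
in $\pi_0\Map(\1_S,\1_S)$.

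Next I would introduce the units $u=a+b$, $x=a/u$ and $y=b/u$, which satisfy $x+y=1$, $a=xu$, $b=yu$ and $ab/(a+b)=xyu$. Expanding $[xu]$, $[yu]$ and $[xyu]$ by repeated use of the logarithmic relation $[cd]=[c]+[d]+\eta[c][d]$ of Theorem~\ref{thm:easy-MW}(i), while keeping the central element $\eta$ in order, a short bookkeeping computation — together with Lemma~\ref{lem:brackets} once more, to recognize $1+\eta[u]=\langle u\rangle$ — collapses the difference of the two bracket expressions to
\[
\bigl([a]+[b]\bigr)-\bigl([a+b]+[\tfrac{ab}{a+b}]\bigr)=-\eta[x][y]\langle u\rangle.
\]
Since $y=1-x$ with $1-x\in\scr O(S)^\times$, the Steinberg relation of Theorem~\ref{thm:steinberg} gives $[x][y]=[x][1-x]=0$, so this difference, and hence also its product with $\eta$, vanishes; that is exactly the identity displayed above.

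I do not expect a genuine obstacle: the corollary is a purely formal consequence of Theorems~\ref{thm:easy-MW} and~\ref{thm:steinberg}, and the only point requiring attention is carrying the factors of $\eta$ correctly through the expansion, which is harmless because $\eta$ is central. In fact the same computation yields the slightly stronger bracket-level identity $[a]+[b]=[a+b]+[ab/(a+b)]$, already before multiplying by $\eta$.
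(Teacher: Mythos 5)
Your proof is correct and follows essentially the same route as the paper: the paper first divides through by $a+b$ (using $\langle ab\rangle=\langle a\rangle\langle b\rangle$) to reduce to the case $b=1-a$, then combines Lemma~\ref{lem:brackets}, Theorem~\ref{thm:easy-MW}(i), and Theorem~\ref{thm:steinberg} exactly as you do. Your substitution $a=xu$, $b=yu$ with $x+y=1$ is just that reduction carried out in-line, and your closing observation that the identity already holds at the bracket level, $[a]+[b]=[a+b]+[ab/(a+b)]$, is a correct byproduct of the computation.
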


\begin{proof}
	Dividing through by $a+b$, we can assume $b=1-a$. We must then prove
	\[
	\langle a\rangle+\langle 1-a\rangle=1+\langle a(1-a)\rangle.
	\]
	This follows from Theorem~\ref{thm:easy-MW}(i), Theorem~\ref{thm:steinberg}, and Lemma~\ref{lem:brackets}.
\end{proof}

The rest of this section is devoted to the proof of Theorem~\ref{thm:steinberg}.
Recall that we denote by $\A^1_0=\P^1-\infty$ and $\A^1_\infty=\P^1-0$ the standard affine subschemes of $\P^1$, which are always pointed at $1$ in what follows.
We will work in the ambient space
\[
P =  (\P^1\times\P^1)-\{(\infty,0),(0,\infty)\} = (\A^1_0\times\A^1_0)\cup (\A^1_\infty\times\A^1_\infty) \subset \P^1\times\P^1
\]
with the four affine axes
\begin{alignat*}{2}
	X_0& =\A^1_0\times 0, & Y_0&=0\times\A^1_0,\\
	X_\infty&= \A^1_\infty\times\infty, &\qquad Y_\infty&=\infty\times\A^1_\infty.
\end{alignat*}
We define the following points and lines in $P$:
\begin{alignat*}{3}
	x&=(1,0), &\quad y&=(0,1), &\quad w&=(1,1), \\
	u&=(1,\infty), & v&=(\infty,1), & z&=(\infty,\infty), \\
	U&= xwu, & V&=ywv, & Z&=xyz.
\end{alignat*}
We further define $T$ to be the triangle formed by the three lines $U,V,Z$ and pointed at $w$:
\[
T=\colim(x\sqcup w\sqcup y\rightrightarrows U \sqcup Z\sqcup V).
\]
The following figures depict the situation near $(0,0)$ and near $(\infty,\infty)$:
\[
\begin{tikzpicture}
	\draw[draw=red,thick] (0,2cm) -- (2cm,0) node[right]{$Z$};
	\draw[draw=red,thick] (0,1.5cm) -- (2.5cm,1.5cm) node[right]{$V$};
	\draw[draw=red,thick] (1.5cm,0) -- (1.5cm,2.5cm) node[right]{$U$};
	\draw[dashed] (.5cm,0) -- (.5cm, 2.5cm) node[right]{$Y_0$};
	\draw[dashed] (0,.5cm) -- (2.5cm, .5cm) node[right]{$X_0$};
	\node[circle, draw, fill, inner sep=0, minimum size=4pt, label=below left:{$0$}] at (0.5cm,0.5cm) {};
	\node[circle, draw=red, fill=red, inner sep=0, minimum size=4pt, label=above right:{$w$}] at (1.5cm,1.5cm) {};
	\node[circle, draw=red, fill=red, inner sep=0, minimum size=4pt, label=below left:{$y$}] at (0.5cm,1.5cm) {};
	\node[circle, draw=red, fill=red, inner sep=0, minimum size=4pt, label=below left:{$x$}] at (1.5cm,0.5cm) {};
	\node[red] at (1.2cm,1.2cm) {$T$};
	\begin{scope}[xshift=5cm]
	\draw[draw=red,thick] plot [
	        samples=100,
	        domain=1:2.5
	        ] (\x,{0.5+1/(\x-0.5)}) node[right]{$Z$};
	\draw[draw=red,thick] (0,0.5cm) -- (2.5cm,0.5cm) node[right]{$V$};
	\draw[draw=red,thick] (0.5cm,0) -- (0.5cm,2.5cm) node[left]{$U$};
	\draw[dashed] (1.5cm,0) -- (1.5cm, 2.5cm) node[right]{$Y_\infty$};
	\draw[dashed] (0,1.5cm) -- (2.5cm, 1.5cm) node[right]{$X_\infty$};
	\node[circle, draw=red, fill=red, inner sep=0, minimum size=4pt, label=above right:{$z$}] at (1.5cm,1.5cm) {};
	\node[circle, draw=red, fill=red, inner sep=0, minimum size=4pt, label=below left:{$w$}] at (.5cm,.5cm) {};
	\node[circle, draw=red, fill=red, inner sep=0, minimum size=4pt, label=below left:{$v$}] at (1.5cm,0.5cm) {};
	\node[circle, draw=red, fill=red, inner sep=0, minimum size=4pt, label=below left:{$u$}] at (0.5cm,1.5cm) {};
	\node[red] at (1cm,1cm) {$T$};
	\end{scope}
\end{tikzpicture}
\]
Finally, we set
\begin{align*}
& T_0 = T\cap(\A^1_0\times\A^1_0) = T-\{u,v,z\}, \\
& \bar P = P/(U\vee V).
\end{align*}

Consider the following diagram of cofiber sequences in $\MS_S$:
\[
\begin{tikzcd}[column sep=0.5cm]
	(Z-\{x,y,z\})_+ \ar{r} \ar[equal]{d} & 0 \ar{d} \ar{r} & \Sigma((Z-\{x,y,z\})_+) \ar{d} \\
	(Z-\{x,y,z\})_+ \ar{r} \ar{d}[swap]{\mathrm{st}} & T \ar{d} \ar{r} & T/((Z-\{x,y,z\})_+) \ar{d} \\
	\G_\m\wedge \G_\m \ar{r} \ar{d}[swap]{r \wedge r} & \bar P \ar{r} \ar{d} & \bar P/(\G_\m\wedge\G_\m) \ar{d} \\
	\Sigma^{-2}(\P^1\wedge\P^1) \ar{r} & 0 \ar{r} & \Sigma^{-1}(\P^1\wedge\P^1)\rlap.
\end{tikzcd}
\]
The top left square commutes using the $\P$-homotopy between the embedding $Z-z\into Z$ and the constant map at $y$ \cite[Corollary 4.11]{AHI} and the $\P^1$-homotopy between $y$ and $w$ in $V$.
The composition of the third column is thus the suspension of the Steinberg map.

\begin{remark}
	We could alternatively make the top left square commute using $x$ and $U$ instead of $y$ and $V$. However, it is crucial that we contract the complement of the point $z$ at infinity: the following proof does not work if we instead contract $Z-x\into Z$ to $y$.
\end{remark}

We have Zariski pushout squares
\begin{alignat*}{2}
T &= T_0 \cup (T-\{x,y\})\quad& \text{with intersection}\quad& T_0-\{x,y\},\\
\bar P &= (\A^1_0\wedge\A^1_0) \cup (\A^1_\infty\wedge \A^1_\infty)\quad& \text{with intersection}\quad& \G_\m\wedge\G_\m,
\end{alignat*}
compatible with the map $T\to\bar P$, inducing coproduct decompositions
\begin{align*}
	T/(T_0-\{x,y\}) &= T_0/(T_0-\{x,y\}) \oplus (T-\{x,y\})/(T_0-\{x,y\}),\\
	\bar P/(\G_\m\wedge\G_\m) & = (\A^1_0\wedge\A^1_0)/(\G_\m\wedge\G_\m)\oplus (\A^1_\infty\wedge \A^1_\infty)/(\G_\m\wedge\G_\m).
\end{align*}
The suspension of the Steinberg map is therefore the sum of the two composites
\begin{equation}\label{eqn:susp-Steinberg}
\begin{tikzcd}[column sep=-1cm]
	& \Sigma((Z-\{x,y,z\})_+) \ar{dl}\ar{dr} & \\
	T_0/(T_0-\{x,y\}) \ar{d} & & (T-\{x,y\})/(T_0-\{x,y\}) \ar{d} \\
	(\A^1_0\wedge\A^1_0)/(\G_\m\wedge\G_\m) \ar{dr} & & (\A^1_\infty\wedge \A^1_\infty)/(\G_\m\wedge\G_\m) \ar{dl}  \\
	& \Sigma^{-1}(\P^1\wedge\P^1)\rlap. &
\end{tikzcd}
\end{equation}
We will show that both composites are nullhomotopic using Tang's Gysin maps \cite{Tang} (see also \cite[Theorem 2.3]{atiyah} for a summary of their main properties).

\emph{The left composite in~\eqref{eqn:susp-Steinberg} is nullhomotopic.}
We show that the map $T_0/(T_0-\{x,y\})\to \Sigma^{-1}(\P^1\wedge\P^1)$ is nullhomotopic.
Consider the square
\[
\begin{tikzcd}
	T_0/(T_0-\{x,y\}) \ar{d} \ar{r}{\gys} & \Th_x(\scr N_{x/U})\sqcup_{x_+}\Th_{x\sqcup y}(\scr N_{x\sqcup y/Z})\sqcup_{y_+} \Th_y(\scr N_{y/V}) \ar{d}{\nu} \\
	(\A^1_0\wedge\A^1_0)/(\G_\m\wedge\G_\m) \ar{r}{\gys} & \Th_{X_0\vee Y_0}(\scr N_{X_0\vee Y_0/P})/\Th_{x\sqcup y}(\scr N_{x\sqcup y/U\vee V}),
\end{tikzcd}
\]
which commutes by the functoriality of Gysin maps.
Here, $\Th_{X_0\vee Y_0}(\scr N_{X_0\vee Y_0/P})$ is defined by the pullback square
\[
\begin{tikzcd}
	\Th_{X_0\vee Y_0}(\scr N_{X_0\vee Y_0/P}) \ar{r} \ar{d} & \Th_{Y_0}(\scr N_{Y_0/P}) \ar{d}{\gys} \\
	\Th_{X_0}(\scr N_{X_0/P}) \ar{r}{\gys} & \Th_0(\scr N_{0/P})\rlap,
\end{tikzcd}
\]
and the Gysin map from $\A^1_0\wedge\A^1_0$ is defined using the functoriality of Gysin maps in triples \cite[Theorem 2.3(ii)]{atiyah}.
By Lemma~\ref{lem:gys-factorization} below, the map $(\A^1_0\wedge\A^1_0)/(\G_\m\wedge\G_\m)\to \Sigma^{-1}(\P^1\wedge\P^1)$ factors through the Gysin map. Since $\nu$ is trivially nullhomotopic, this proves the claim.

\emph{The right composite in~\eqref{eqn:susp-Steinberg} is nullhomotopic.}
By construction, the map 
\[
\Sigma((Z-\{x,y,z\})_+)\to (T-\{x,y\})/(T_0-\{x,y\})\simto T/T_0
\] 
factors through $\Sigma((Z-z)_+)$, and we show that the map $\Sigma((Z-z)_+)\to \Sigma^{-1}(\P^1\wedge\P^1)$ is nullhomotopic.
There is a Zariski-local coproduct decomposition
\[
T/T_0 = Z/(Z-z) \oplus U/(U-u) \oplus V/(V-v).
\]
We consider as before the commuting square
\[
\begin{tikzcd}
	Z/(Z-z) \oplus U/(U-u) \oplus V/(V-v) \ar{d} \ar{r}{\gys} & \Th_z(\scr N_{z/ Z}) \oplus \Th_u(\scr N_{u/ U})\oplus\Th_v(\scr N_{v/ V}) \ar{d}{\mu} \\
	(\A^1_\infty\wedge \A^1_\infty)/(\G_\m\wedge\G_\m) \ar{r}{\gys} & \Th_{X_\infty\vee Y_\infty}(\scr N_{X_\infty\vee Y_\infty/P})/\Th_{u\sqcup v}(\scr N_{u\sqcup v/ U\vee V})\rlap.
\end{tikzcd}
\]
By Lemma~\ref{lem:gys-factorization}, the map $(\A^1_\infty\wedge \A^1_\infty)/(\G_\m\wedge\G_\m)\to \Sigma^{-1}(\P^1\wedge\P^1)$ factors through the Gysin map. 
The map $\mu$ is clearly nullhomotopic on the last two summands, but not on the first one. 
Instead, we claim that the composite
\begin{equation*}\label{eqn:gysin-triv}
\Sigma((Z-z)_+)\to Z/(Z-z) \xrightarrow{\gys} \Th_z(\scr N_{z/ Z})
\end{equation*}
is nullhomotopic, which will complete the proof. 
Here, the first map is induced by the nullhomotopy of the map $(Z-z)_+\to Z-z\into Z$, where $Z$ is pointed at $y$ (by the original choice of nullhomotopy in $T$). If we identify $Z$ with $\P^1$ so that $x,y,z$ become $0,1,\infty$, then, by Lemma~\ref{lem:gysin-A^1}, the above composite can be identified with
\[
\Sigma(\A^1_{0+}) \to \Sigma\A^1_0\to  \P^1/\A^1_0 \simeq \A^1_\infty/\G_\m\to \Sigma\G_\m\xrightarrow{r} \P^1,
\]
which is nullhomotopic as $\Sigma\A^1_0\to \A^1_\infty/\G_\m \to \P^1$ is even a split cofiber sequence.

\begin{lemma}\label{lem:gysin-A^1}
	Under the isomorphism $\P^1=(\P^1,1)\simeq (\P^1,\infty)=\Th(\scr O)$ given by $\left(\begin{smallmatrix}1 & 0 \\ -1 & 1\end{smallmatrix}\right)$, the Gysin map
	\[
	\A^1/\G_\m\xrightarrow{\gys_0} \Th_0(\scr N_{0/\A^1})
	\]
	coincides with the composition
	\[
	\A^1/\G_\m\to \Sigma\G_\m\xrightarrow{r}\P^1.
	\]
\end{lemma}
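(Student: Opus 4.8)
\subsection*{Proof proposal}

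The plan is to make both sides of the claimed identity explicit — using the decomposition of $\G_\m$ from Construction~\ref{cst:Q} on the right and unwinding Tang's construction on the left — and to observe that both are governed by the same $\P$-homotopy-invariance datum $h(\sigma)$ of \cite[Theorem 4.1]{AHI}, so that they can only differ by a change of chart on $\P^1$.

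First I would make the target explicit: $\scr N_{0/\A^1}$ is canonically trivialized by the coordinate $t$, so $\Th_0(\scr N_{0/\A^1})=\Th(\scr O)=\P(\scr O\oplus\scr O)/\P(\scr O)=(\P^1,\infty)$, where $\A^1=\V(\scr O)\hookrightarrow\P^1$ is the standard chart with zero section $0$ and $\infty=\P(\scr O)$ the point at infinity of the bundle. On the right, Construction~\ref{cst:Q} identifies the inclusion $\G_\m=\A^1_0\cap\A^1_\infty\hookrightarrow\A^1_0$ with the projection onto the $\A^1_{0+}$-factor of $\G_{\m+}=Q\times_\1\A^1_{0+}\times_\1\A^1_{\infty+}$; passing to cofibers and using the identity $r\partial=\id$, this identifies the composite $\A^1/\G_\m\to\Sigma\G_\m\xrightarrow{r}\P^1$ of the statement with the projection
\[
\A^1/\G_\m\simeq\P^1\oplus\Sigma\A^1_\infty\longrightarrow\P^1
\]
onto the first summand (pointed at $1$).

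For the left-hand side I would unwind Tang's Gysin map \cite{Tang} (see \cite[Theorem 2.3]{atiyah}) for the Cartier divisor $\{0\}=V(t)\hookrightarrow\A^1$, where the blowup/deformation picture degenerates, and recognize $\gys_0$ as the canonical comparison map from the affine to the projective Thom space,
\[
\A^1/\G_\m=\V(\scr O)/(\V(\scr O)\setminus 0)\longrightarrow\P(\scr O\oplus\scr O)/\P(\scr O)=\Th(\scr O),
\]
induced by the open immersion $\V(\scr O)\hookrightarrow\P(\scr O\oplus\scr O)$ and the $\P$-homotopy $h(\sigma)$ — concretely $(v,s)\mapsto[v:s]$, $s\in\A^1$ — contracting the chart inclusion $v\mapsto[v:1]$ to the constant map at $\infty$ over $\G_\m$. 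Now $h(\sigma)$ is exactly the homotopy that \cite[Corollary 4.11]{AHI} uses to nullhomotope $\A^1_0\hookrightarrow\P^1$, hence to build the retraction $r$ splitting $\partial\colon\Sigma^{-1}\P^1\to\G_\m$, so the two sides arise from the same datum. Chasing the diagrams, $\gys_0$ is then identified with the projection $\A^1/\G_\m\to(\P^1,1)$ above followed by the unique isomorphism $(\P^1,1)\simeq(\P^1,\infty)$ converting the chart $\A^1_0$ pointed at $1$ (used to build $r$) into the chart $\V(\scr O)$ pointed at $\infty$ (the natural basepoint of $\Th(\scr O)$); imposing $0\mapsto 0$ and $1\mapsto\infty$ forces this isomorphism to be $\left(\begin{smallmatrix}1 & 0 \\ -1 & 1\end{smallmatrix}\right)$, as in the statement.

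The step I expect to be the main obstacle is making this last identification fully precise, i.e.\ checking that after the change of chart the $\P$-homotopy built into Tang's Gysin map for $V(t)$ and the one built into $r$ literally coincide, rather than differing by some $\langle a\rangle$ or $\epsilon$. This is also exactly the point at which $\A^1$-invariance is unavailable — both maps are honestly non-invertible in $\MS$, whereas $\A^1$-invariance would collapse everything to the purity equivalence $\A^1/\G_\m\simeq\Th(\scr O)$ — so the comparison has to be carried out at the level of the underlying $\P$-homotopies.
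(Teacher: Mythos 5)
Your route is the paper's route: the lemma is deduced as the special case $\scr E=\scr O$ of a general normalization statement (Proposition~\ref{prop:normalization}) identifying the Gysin map of the zero section $X\into\V(\scr E)$ with the map $\V(\scr E)/(\V(\scr E)-0)\to\P(\scr E\oplus\scr O)/\P(\scr E)$ induced by the compactification $\V(\scr E)\into\P(\scr E\oplus\scr O)$ and the homotopy of \cite[Proposition 4.9]{AHI}; your treatment of the right-hand side via the decomposition \eqref{eqn:G_m-nonunital} and the basepoint change $\left(\begin{smallmatrix}1&0\\-1&1\end{smallmatrix}\right)$ is correct and matches the paper.

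The gap is exactly where you place it, and it is the entire mathematical content of the statement: you ``recognize'' Tang's $\gys_0$ as the compactification map together with the homotopy $(v,s)\mapsto[v:s]$, but you give no mechanism for extracting this description from the deformation-to-the-normal-cone definition, nor for checking that the resulting homotopy is \emph{literally} the one used to build $r$ (rather than differing by some $\langle a\rangle$ or $\epsilon$). The paper's device is a retraction $q\colon\Bl_{X\times 0}(\V(\scr E)\times\P^1)\to\P(\scr E\oplus\scr O)$, $(v,t)\mapsto[v:t]$, of the deformation space. This $q$ splits the cofiber sequence defining $\gys$, so that $\gys$ is identified with $q\circ i_1$, i.e.\ with the quotient map $\V(\scr E)_+\into\P(\scr E\oplus\scr O)_+\to\Th_X(\scr E)$; and it pushes the tautological $\P^1$-homotopy between $i_1$ and $i_0$ on $(\V(\scr E)-0)_+$ (which lifts to the blowup) to the homotopy $(\V(\scr E)-0)\times\P^1\to\P(\scr E\oplus\scr O)$, $(v,t)\mapsto[v:t]$, which is on the nose the restriction of the twisted homotopy $\P_{\P(\scr E\oplus\scr O)-0}(\scr O(-1)\oplus\scr O)\to\P(\scr E\oplus\scr O)$ of \cite[Proposition 4.9]{AHI}. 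Once $q$ is in hand there is nothing left to check: the two $\P$-homotopies are equal, not merely homotopic, and the only remaining bookkeeping is the basepoint change you already carried out. Without some such explicit computation inside the blowup, the comparison you flag as the main obstacle cannot be completed, so as written the proposal asserts rather than proves the key identification.
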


\begin{proof}
	This is the special case $\scr E=\scr O$ of Proposition~\ref{prop:normalization}.
\end{proof}

\begin{lemma}\label{lem:gys-factorization}
	The map
	\[
	(\A^1\wedge\A^1)/(\G_\m\wedge\G_\m) \to\Sigma(\G_\m\wedge\G_\m) \xrightarrow{r\wedge r}\Sigma^{-1}(\P^1\wedge\P^1)
	\]
	factors through the Gysin map
	\[
	\gys_{X\vee Y/\A^2}\colon (\A^1\wedge\A^1)/(\G_\m\wedge\G_\m) \to \Th_{X\vee Y/x\sqcup y}(\scr N_{X\vee Y/\A^2}).
	\]
\end{lemma}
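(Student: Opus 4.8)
The plan is to unwind the map of the statement and build the factorisation by hand, the key input being Lemma~\ref{lem:gysin-A^1}. Write $\partial\colon (\A^1\wedge\A^1)/(\G_\m\wedge\G_\m)\to\Sigma(\G_\m\wedge\G_\m)$ for the connecting map of the cofibre sequence $\G_\m\wedge\G_\m\to\A^1\wedge\A^1\to(\A^1\wedge\A^1)/(\G_\m\wedge\G_\m)$, so that the map to be factored is $\Sigma(r\wedge r)\circ\partial$. By Lemma~\ref{lem:gysin-A^1}, the one-variable Gysin map $\gys_0\colon\A^1/\G_\m\to\P^1$ is the composite $\A^1/\G_\m\xrightarrow{\partial_{\A^1}}\Sigma\G_\m\xrightarrow{\Sigma r}\P^1$, where $\partial_{\A^1}$ is the connecting map of $\G_\m\to\A^1\to\A^1/\G_\m$. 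Hence, precomposing $\gys_0$ with the quotient map $(\A^1,1)\to\A^1/\G_\m$ gives a canonically nullhomotopic map, since $\partial_{\A^1}$ precomposed with $(\A^1,1)\to\A^1/\G_\m$ is a composite of two consecutive maps in a cofibre sequence.

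Next I would describe the target. Using the trivialisations $\scr N_{X/\A^2}\cong\scr O_X$ and $\scr N_{Y/\A^2}\cong\scr O_Y$, the external-product and triple-functoriality properties of Tang's Gysin maps \cite{Tang} (cf.\ \cite[Theorem 2.3]{atiyah}), and the definition of $\Th_{X\vee Y}(\scr N_{X\vee Y/\A^2})$ as a fibre product, one identifies $\Th_{X\vee Y/x\sqcup y}(\scr N_{X\vee Y/\A^2})$ with the fibre product over $\Th_0(\scr N_{0/\A^2})\simeq\P^1\wedge\P^1$ of the two relative Thom spaces $\Th_X(\scr N_{X/\A^2})/\Th_x\simeq(\A^1,1)\wedge\P^1$ and $\Th_Y(\scr N_{Y/\A^2})/\Th_y\simeq(\A^1,1)\wedge\P^1$, the two structure maps being $\gys_0$ (in the first, resp.\ second, coordinate direction) smashed with $\P^1$ and precomposed with $(\A^1,1)\to\A^1/\G_\m$; moreover $\gys_{X\vee Y/\A^2}$ is the resulting map into this fibre product, whose two legs are the reduced Gysin maps of $(\A^2,X)$ and $(\A^2,Y)$ and whose coherence homotopy comes from the functoriality of Gysin maps in triples. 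By the first paragraph both structure maps of the fibre product are canonically nullhomotopic, so the target carries a canonical direct summand $\Omega\Th_0(\scr N_{0/\A^2})\simeq\Sigma^{-1}(\P^1\wedge\P^1)$; let $\beta$ be the projection onto it. Then $\beta\circ\gys_{X\vee Y/\A^2}$ is exactly the loop recording the difference between the triple-functoriality coherence homotopy and the two canonical nullhomotopies, so that the lemma reduces to the identity $\beta\circ\gys_{X\vee Y/\A^2}=\Sigma(r\wedge r)\circ\partial$.

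I expect this last identity to be the only non-formal part. To prove it one unwinds the triple-functoriality coherence: it is built from deformation to the normal cone separately along $X$ and along $Y$, and the naturality of Lemma~\ref{lem:gysin-A^1} in each coordinate direction identifies the two resulting nullhomotopies of the composite to $\Th_0(\scr N_{0/\A^2})$ with $\Sigma(r\wedge\id)$ and $\Sigma(\id\wedge r)$ applied to the one-variable connecting maps; their difference is precisely the two-variable connecting map $\partial$ (here the decomposition~\eqref{eqn:G_m-nonunital} of $\G_\m$, smashed with itself, helps organise the computation), and postcomposing with $\Sigma(r\wedge r)$ gives $\Sigma(r\wedge r)\circ\partial$. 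The principal obstacle is the bookkeeping in this identification — matching the pointed/relative structures on the two sides, fixing the orientations of the two normal-cone deformations consistently, and keeping track of the sign introduced by the ordering of the two legs of the fibre product.
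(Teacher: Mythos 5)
Your overall architecture is the same as the paper's: present $\Th_{X\vee Y/x\sqcup y}(\scr N_{X\vee Y/\A^2})$ as the fibre product of $\Th_{X/x}\to\Th_0\from\Th_{Y/y}$, present $\Sigma^{-1}(\P^1\wedge\P^1)$ as $0\times_{\P^1\wedge\P^1}0$, and reduce the factorization to a one-variable compatibility supplied by Lemma~\ref{lem:gysin-A^1}. (The paper phrases this as completing a cube of cospans rather than as a projection onto an $\Omega$-summand, but a map of cospans into $0\to\P^1\wedge\P^1\from 0$ is exactly the same data as your $\beta$.) So the approach is right; the issue is with how you propose to close the argument.

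The gap is in the last step, and it is not mere bookkeeping. Your $\beta$ depends on a \emph{choice} of nullhomotopy of each structure map $(\A^1,1)\wedge\P^1\to\P^1\wedge\P^1$, and the identity $\beta\circ\gys_{X\vee Y/\A^2}=\Sigma(r\wedge r)\circ\partial$ can only hold for the correct choice: two choices differ by a class in $\pi_0\Map((\A^1,1)\wedge\P^1,\Sigma^{-1}(\P^1\wedge\P^1))$, and since $(\A^1,1)$ is \emph{not} contractible in $\MS$ this indeterminacy is not obviously zero (unlike in the $\A^1$-invariant setting). In your first paragraph you take the nullhomotopy of $\gys_0\circ q$ coming from the cofibre sequence $\G_\m\to\A^1\to\A^1/\G_\m\to\Sigma\G_\m$ pushed forward along $\Sigma r$; but the nullhomotopy that actually enters both the definition of $r\wedge r$ as a map into $0\times_{\P^1\wedge\P^1}0$ and the triple-functoriality coherence of the Gysin maps is the $\P$-homotopy-invariance nullhomotopy of the inclusion $i\colon\A^1\into\P^1$ from \cite[Corollary 4.11]{AHI}. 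Identifying these two nullhomotopies is a homotopy-level refinement of Lemma~\ref{lem:gysin-A^1} that you neither state nor prove, and without it your ``difference of the two nullhomotopies equals $\partial$'' computation carries an unaccounted error term. The paper avoids this issue entirely: Proposition~\ref{prop:normalization} identifies the Gysin map of the zero section \emph{together with its defining homotopy data} with the quotient map $\A^1_+\into\P^1_+\to\Th(\scr O)$ and the restriction of the twisted $\P^1$-homotopy, so that $\id\wedge\gys_0$ literally equals $\id\wedge i$ and the required nullhomotopy of the composite is, by construction, induced by the nullhomotopy of $i$ — the cube then closes ``by definition'' with no comparison of nullhomotopies needed. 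To repair your write-up, replace the cofibre-sequence nullhomotopy with the geometric one and invoke Proposition~\ref{prop:normalization} at the level of homotopies, not just of maps.
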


\begin{proof}
	Recalling the definition of $\Th_{X\vee Y}$ as a pullback, we consider the cartesian squares
	\[
	\begin{tikzcd}[column sep=0.5cm]
		(\A^1\wedge\A^1)/(\G_\m\wedge\G_\m) \ar{r} \ar{d} & (\A^1\wedge\A^1)/(\G_\m\wedge\A^1) \ar{d} \ar[phantom]{dr}[description]{\overset{\gys}\Longrightarrow} & \Th_{X\vee Y/x\sqcup y}(\scr N_{X\vee Y/\A^2}) \ar{r} \ar{d} & \Th_{Y/y}(\scr N_{Y/\A^2}) \ar{d}{\gys} \\
		(\A^1\wedge\A^1)/(\A^1\wedge\G_\m) \ar{r} \ar[phantom]{dr}[description]{\Downarrow} & (\A^1\wedge\A^1)/(\A^1\wedge\A^1-0) & \Th_{X/x}(\scr N_{X/\A^2}) \ar{r}{\gys} \ar[phantom]{dr}[description]{\Downarrow ?} & \Th_0(\scr N_{0/\A^2}) \\
		\Sigma(\G_\m\wedge\G_\m) \ar{r} \ar{d} & \Sigma(\G_\m\wedge\A^1) \ar{d} \ar[phantom]{dr}[description]{\Longrightarrow} & \Sigma^{-1}(\P^1\wedge\P^1) \ar{r} \ar{d} & 0 \ar{d} \\
		\Sigma(\A^1\wedge \G_\m) \ar{r} & \Sigma(\A^1\wedge\A^1-0) & 0 \ar{r} & \P^1\wedge\P^1\rlap.
	\end{tikzcd}
	\]
	To get the desired factorization, we see that it suffices to complete the following cube:
	\[
	\begin{tikzcd}[column sep={0.5cm},row sep={1cm,between origins},/tikz/column 1/.style={column sep=-0.5cm}]
		(\A^1\wedge\A^1)/(\G_\m\wedge\A^1) \ar{rr}{\gys} \ar{dr} \ar{dd} & & \Th_{Y/y}(\scr N_{Y/\A^2}) \ar{dd}[pos=0.2]{\gys} \ar[dashed]{dr} \\
		& \Sigma(\G_\m\wedge\A^1) \ar[crossing over,shorten >=5pt]{rr} & & 0  \ar{dd} \\
		(\A^1\wedge\A^1)/(\A^1\wedge\A^1-0) \ar{rr}[pos=0.6]{\gys} \ar{dr} & & \Th_0(\scr N_{0/\A^2}) \ar[dashed]{dr} \\
		& \Sigma(\A^1\wedge\A^1-0) \ar{r} \ar[<-,crossing over]{uu} & \Sigma^2(\G_\m\wedge\G_\m) \ar{r} & \P^1\wedge\P^1\rlap.
	\end{tikzcd}
	\]
	The front face is by definition the nullhomotopy of $r\wedge i \colon \Sigma\G_\m\wedge \A^1\to\P^1\wedge\P^1$ induced by the null\-ho\-mo\-topy of the inclusion $i \colon \A^1\into \P^1$. Identifying $\Th(\scr O)$ with $(\P^1,1)$ and using the linearity of Gysin maps, we can rewrite the back face as
	\[
	\begin{tikzcd}[column sep=2cm]
		\A^1/\G_\m\wedge \A^1 \ar{r}{\gys_0\wedge \id} \ar{d} & \P^1\wedge \A^1 \ar{d}{\id\wedge \gys_0} \\
		\A^1/\G_\m\wedge \A^1/\G_\m \ar{r}{\gys_0\wedge\gys_0} & \P^1\wedge \P^1\rlap.
	\end{tikzcd}
	\]
	By Lemma~\ref{lem:gysin-A^1}, we can thus take the lower dashed arrow to be the identity to make the bottom face commute.
	To complete the cube, we must show that the given nullhomotopy of the composite 
	\[
	\A^1/\G_\m\wedge \A^1\to\Sigma\G_\m\wedge \A^1\xrightarrow{r\wedge i } \P^1\wedge\P^1
	\] is induced by some nullhomotopy of the vertical Gysin map $\id\wedge\gys_0\colon \P^1\wedge \A^1\to \P^1\wedge\P^1$.
	Again by Lemma~\ref{lem:gysin-A^1}, the latter coincides with $\id\wedge i $, so this is true by definition.
\end{proof}

A key ingredient in the above argument is the following refinement of the normalization property of Gysin maps \cite[Theorem 2.3(iii)]{AHI}, whose proof we learned from Longke Tang:

\begin{proposition}[Tang]
	\label{prop:normalization}
	Let $\scr E$ be a finite locally free sheaf on $X\in\Sm_S$. Then the Gysin map
	\[
	\frac{\P(\scr E\oplus\scr O)}{\P(\scr E\oplus \scr O)-0} = \frac{\V(\scr E)}{\V(\scr E)-0} \xrightarrow{\gys} \Th_0(\scr E) = \frac{\P(\scr E\oplus\scr O)}{\P(\scr E)}
	\]
	of the zero section $X\into \V(\scr E)$
	coincides, naturally in $(S,X,\scr E)$, with the map induced by the commuting triangle
	\[
	\begin{tikzcd}
		\P(\scr E \oplus \scr O)-0 \ar[hook]{dr} \ar[twoheadrightarrow]{d} & \\
		\P(\scr E) \ar[hook]{r} & \P(\scr E\oplus\scr O)
	\end{tikzcd}
	\]
	of \cite[Proposition 4.9]{AHI}.
\end{proposition}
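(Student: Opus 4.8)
\emph{Proof plan.} The strategy is to unwind the construction of the Gysin map $\gys_{0/\V(\scr E)}$ of \cite{Tang} in the special case of a zero section, where the deformation to the normal bundle degenerates in the simplest possible way, and then to recognise the resulting comparison as the map induced by the triangle of \cite[Proposition 4.9]{AHI}. Recall that $\gys_{Z/X}$ is assembled from the deformation to the normal bundle $D_{Z/X}$, which is a smooth $S$-scheme over $\A^1$ whose fibre over $1$ is $X$, whose fibre over $0$ is $\V(\scr N_{Z/X})$, and which contains the closed subscheme $Z\times\A^1$; the Gysin map is the composite of the two comparison maps relating $X/(X-Z)$ and $\Th_Z(\scr N_{Z/X})$ to $D_{Z/X}/(D_{Z/X}-Z\times\A^1)$, furnished by $\P$-homotopy invariance and elementary blowup excision (the axioms summarised in \cite[Theorem 2.3]{atiyah}). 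All the ingredients are natural in $(S,X,\scr E)$, so it suffices to produce a single compatible identification; if convenient one may further reduce, by naturality, to the tautological rank-$r$ bundle over a Grassmannian.

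\emph{Key step.} For the zero section $X\into\V(\scr E)$ the deformation to the normal bundle is canonically trivial: fibrewise scaling exhibits $D_{X/\V(\scr E)}$ as $\V(\scr E)\times\A^1$, with $X\times\A^1$ the zero locus $0\times\A^1$, with the fibre over $1$ the identity of $\V(\scr E)$, and with the degeneration over $0$ recording exactly the linear projection $\P(\scr E\oplus\scr O)-0\onto\P(\scr E)$ away from the zero section. Carrying this identification through the definition of $\gys$, and identifying $\V(\scr E)/(\V(\scr E)-0)$ with $\P(\scr E\oplus\scr O)/(\P(\scr E\oplus\scr O)-0)$ by excision of $\P(\scr E)$, the Gysin map turns into the map on quotients induced by the commuting triangle $\P(\scr E\oplus\scr O)-0\onto\P(\scr E)\into\P(\scr E\oplus\scr O)$. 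The last point is that the homotopy witnessing the commutativity of this triangle is, on the nose, the one of \cite[Proposition 4.9]{AHI}: both arise by feeding the scaling map into the section-deformation homotopies $h(\sigma)$ of \cite[Theorem 4.1]{AHI} (equivalently, from the canonical nullhomotopies of $\A^1_0\into\P^1$ of \cite[Corollary 4.11]{AHI}).

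\emph{Main obstacle.} The real work is the bookkeeping forced by the absence of $\A^1$-invariance. One must verify that the canonical scaling trivialization of $D_{X/\V(\scr E)}$ is compatible with the \emph{specific} comparison maps entering the definition of $\gys$ — since only $\P$-homotopy invariance and elementary blowup excision are at hand, the two ``restrict to a fibre'' steps have to be matched against those supplied by the axioms rather than invoked via $\A^1$-contractibility — and that the homotopy one thereby obtains for the triangle coincides with that of \cite[Proposition 4.9]{AHI}, not merely up to some further unspecified homotopy. Once these matchings are carried out, naturality in $(S,X,\scr E)$ is automatic, as scaling, excision, and the homotopies $h(\sigma)$ are all natural.
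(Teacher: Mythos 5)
Your overall strategy — specialize Tang's construction to the zero section, where the deformation space is explicit, and then recognize the resulting nullhomotopy on $\V(\scr E)-0$ as the twisted $\P^1$-homotopy of \cite[Proposition 4.9]{AHI} — is the same as the paper's. But two things in your plan need correction. First, your recollection of the Gysin map is the $\A^1$-invariant purity zig-zag: in $\MS$ the two ``comparison maps'' relating $Y/(Y-X)$ and $\Th_X(\scr N_{X/Y})$ to the deformation space are \emph{not} invertible, so the map cannot be defined as their composite. Tang's map is instead produced from the cofiber sequence
\[
\Th_X(\scr N_{X/Y}) \to \frac{\Bl_{X\times 0}(Y\times\P^1)}{\Bl_{X\times 0}(Y\times 0)} \to \frac{Y\times\P^1}{Y\times 0}
\]
over the \emph{compactified} line $\P^1$, with the lift of $i_1\colon Y_+\to \Bl_{X\times 0}(Y\times\P^1)$ to the fiber furnished by the tautological $\P^1$-homotopy from $i_1$ to $i_0$; any unwinding has to start from this description.

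Second, the ``main obstacle'' you identify — matching your scaling trivialization against the specific comparison maps and homotopies entering the definition — is exactly where the content lies, and your plan restates it rather than discharging it. The paper's device is that for $Y=\V(\scr E)$ the cofiber sequence above is \emph{split} by the explicit retraction
\[
q\colon \Bl_{X\times 0}(\V(\scr E)\times\P^1)\to \P(\scr E\oplus\scr O),\qquad (v,t)\mapsto [v:t].
\]
Applying $q$ identifies $i_1$ with the composite $\V(\scr E)_+\into\P(\scr E\oplus\scr O)_+\to\Th_X(\scr E)$, and sends the tautological $\P^1$-homotopy (restricted to $\V(\scr E)-0$, where it lifts to the blowup) to the scaling homotopy $(v,t)\mapsto[v:t]$ between the inclusion $\V(\scr E)-0\into\P(\scr E\oplus\scr O)$ and the composite through $\P(\scr E)$ — which is on the nose the restriction of the twisted homotopy $\P_{\P(\scr E\oplus\scr O)-0}(\scr O(-1)\oplus\scr O)\to\P(\scr E\oplus\scr O)$ from the proof of \cite[Proposition 4.9]{AHI}. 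Without exhibiting this splitting (or an equivalent strict identification), the assertion that the two homotopies agree ``on the nose'' remains unproved; with it, the naturality in $(S,X,\scr E)$ is indeed automatic, as you say.
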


\begin{proof}
	Recall that the Gysin map $\gys_{X/Y}\colon Y_+\to\Th_X(\scr N_{X/Y})$ of a closed immersion $X\into Y$ in $\Sm_S$ is constructed using the cofiber sequence
	\[
	\Th_X(\scr N_{X/Y}) \to \frac{\Bl_{X\times 0}(Y\times\P^1)}{\Bl_{X\times 0}(Y\times 0)} \to \frac{Y\times\P^1}{Y\times 0}
	\]
	in $\scr P_{\sbu}(\Sm_S)_*$: it is induced by the embedding $i_1\colon Y\into \Bl_{X\times 0}(Y\times\P^1)$ and the $\P^1$-homotopy between $i_1$ and $i_0$ in $Y\times\P^1$.
	Moreover, the restriction to $Y-X$ of this $\P^1$-homotopy lifts to the blowup, which defines a nullhomotopy of the restriction to $Y-X$ of $\gys_{X/Y}$, hence
	\[
	\gys_{X/Y}\colon Y/(Y-X) \to \Th_X(\scr N_{X/Y}).
	\]
	
	When $Y=\V(\scr E)$, this cofiber sequence is split by the retraction
	\[
	q\colon \Bl_{X\times 0}(\V(\scr E)\times\P^1) \to \P(\scr E\oplus \scr O),\quad (v,t)\mapsto [v:t].
	\]
	More precisely, if $\P^1=\P(\scr O_0\oplus\scr O_\infty)$, a point of the blowup consists of $\sigma\colon\scr E\to\scr O$, $(x,y)\colon\scr O_0\oplus\scr O_\infty\onto\scr L$, and $\phi\colon (\scr E\otimes\scr L)\oplus\scr O_0\onto\scr M$ such that $(\sigma,x)$ vanishes on $\ker\phi$, and its image by $q$ is the surjection 
	\[
	\scr E\oplus\scr O=(\scr E\otimes\scr O_\infty)\oplus\scr O_0 \xrightarrow{(y,\id)}(\scr E\otimes\scr L)\oplus\scr O_0 \xrightarrow{\phi} \scr M.
	\]
	This canonically identifies the Gysin map $\gys\colon \V(\scr E)_+\to \Th_X(\scr E)$ with the composite
	\[
	\V(\scr E)_+\xrightarrow{i_1} \frac{\Bl_{X\times 0}(\V(\scr E)\times\P^1)}{\Bl_{X\times 0}(\V(\scr E)\times 0)}\xrightarrow{q}\Th_X(\scr E),
	\]
	which is the quotient map $\V(\scr E)_+\into \P(\scr E\oplus\scr O)_+\to\Th_X(\scr E)$.
	The nullhomotopy of the restriction of $\gys$ to $\V(\scr E)-0$ is then defined by the image under $q$ of the tautological $\P^1$-homotopy between 
	\[
	i_1,i_0\colon (\V(\scr E)-0)_+ \to \Bl_{X\times 0}(\V(\scr E)\times\P^1),
	\]
	which is the $\P^1$-homotopy
	\[
	(\V(\scr E)-0)\times \P^1 \to \P(\scr E\oplus\scr O),\quad (v,t)\mapsto [v:t],
	\]
	between the inclusion $\V(\scr E)-0\into \P(\scr E\oplus\scr O)$ and the composite $\V(\scr E)-0\onto \P(\scr E)\into \P(\scr E\oplus\scr O)$.
	Thus, it remains to observe that this $\P^1$-homotopy is precisely the restriction to $\V(\scr E)-0$ of the twisted $\P^1$-homotopy
	\[
	\P_{\P(\scr E\oplus\scr O)-0}(\scr O(-1)\oplus\scr O)\to \P(\scr E\oplus \scr O)
	\]
	from the proof of \cite[Proposition 4.9]{AHI}.
\end{proof}

\begin{remark}
	This proof of the Steinberg relation simplifies significantly in $\MS_S^{\A^1}$: first, we can replace $T$ and $\bar P$ by $T_0$ and $\A^1_0\wedge \A^1_0$, and second,
	the Gysin maps become purity isomorphisms,
	so the fact that $\nu$ is nullhomotopic directly gives the result. 
	It shows in fact that the suspended Steinberg map
	\[
	\Sigma((\A^1-\{0,1\})_+)\to \Sigma(\G_\m\wedge\G_\m)
	\]
	is nullhomotopic in $\scr P_{\Zar,\A^1}(\Sm_S)$.
	
	The idea to use the purity isomorphism also appears in an unpublished proof of the Steinberg relation by Powell \cite{Powell}, which however like the proof of Hu and Kriz \cite{HuKriz} is flawed: in both cases it is only argued that the map $\Sigma(\A^1-\{0,1\})\to \Sigma(\G_\m\wedge\G_\m)$ is nullhomotopic in $\scr P_{\Zar,\A^1}(\Sm_S)$, which does not imply the Steinberg relation.
	Valid proofs in $\A^1$-homotopy theory based on the idea of Hu and Kriz were given in \cite[\sect 2]{DruzhininMW} and \cite{HoyoisSteinberg}.
\end{remark}

\section{The Grothendieck–Witt relations}

Let $\rm E_2(S)$ be the subgroup of $\SL_2(S)$ generated by elementary matrices, and let $\rm E_2'(S)$ be its normal closure in $\GL_2(S)$ (which is strictly larger in general).
We observe that the matrix $\left(\begin{smallmatrix}a & 0 \\ 0 & a^{-1}\end{smallmatrix}\right)$ belongs to $\rm E_2'(S)$ for any unit $a\in\scr O(S)^\times$. Indeed, 
\[
\begin{pmatrix}
	a & 0 \\ 0 & a^{-1}
\end{pmatrix}
=
\left[\begin{pmatrix}
	a & 0 \\ 0 & 1
\end{pmatrix},
\begin{pmatrix}
	0 & 1 \\ -1 & 0
\end{pmatrix}\right]
\quad\text{and}\quad
\begin{pmatrix}
	0 & 1 \\ -1 & 0
\end{pmatrix}
=
\begin{pmatrix}
	1 & 1 \\ 0 & 1
\end{pmatrix}
\begin{pmatrix}
	1 & 0 \\ -1 & 1
\end{pmatrix}
\begin{pmatrix}
	1 & 1 \\ 0 & 1
\end{pmatrix}
\in \rm E_2(S).
\]

\begin{lemma}\label{lem:E2}
	The group $\rm E_2'(S)$ acts trivially on $\P^1_+$ in $\h\MS_S$.
\end{lemma}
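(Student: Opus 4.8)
The plan is to first make two easy reductions and then isolate the one real point.

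\emph{Reductions.} Let $N\subseteq\GL_2(S)$ be the set of matrices acting as the identity on $\P^1_+$ in $\h\MS_S$. It is obviously a subgroup, and it is normal: conjugation by $h\in\GL_2(S)$ acts on $\Aut_{\h\MS_S}(\P^1_+)$ by conjugation by the automorphism induced by $h$, which fixes the identity. Since $\rm E_2'(S)$ is by definition the normal closure in $\GL_2(S)$ of the group generated by the elementary matrices, it suffices to show every elementary matrix lies in $N$; and since $\left(\begin{smallmatrix}0&1\\1&0\end{smallmatrix}\right)\left(\begin{smallmatrix}1&0\\t&1\end{smallmatrix}\right)\left(\begin{smallmatrix}0&1\\1&0\end{smallmatrix}\right)=\left(\begin{smallmatrix}1&t\\0&1\end{smallmatrix}\right)$, normality of $N$ reduces us to the single family $u_t:=\left(\begin{smallmatrix}1&t\\0&1\end{smallmatrix}\right)$, $t\in\scr O(S)$. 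Now $u_t$ fixes the point $\infty=[1:0]$, hence preserves the splitting $\P^1_+\simeq\1\oplus\Th_S(\scr O)$ determined by $\infty$ (with $\Th_S(\scr O)=(\P^1,\infty)=\P_S(\scr O^2)/\P_S(\scr O_1)$), acting as $\id_\1\oplus\bar u_t$; since $\Th_S(\scr O)$ is $\otimes$-invertible, it remains to prove that the automorphism $\bar u_t$ of this invertible object is the identity. I note in passing that one cannot simply imitate Lemma~\ref{lem:swap-GW} here: that argument applies to the $\GL_n(\Z)$-functoriality action on a Thom space, whereas $\bar u_t$ is a ``translation'' (the image of $\left(\begin{smallmatrix}\id_{\scr O}&t\\0&1\end{smallmatrix}\right)$, identity on the subbundle), to which perfectness of $\mathrm E_n$ gives nothing because the whole of $\mathrm E_n(S)$ does not act.

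\emph{The crux: $\bar u_t=\id$.} On the affine chart $\A^1_0=\P^1-\infty=\V_S(\scr O)$ the automorphism $u_t$ is translation by $t$. Working over the base $X:=\A^1_0$ and applying $\P$-homotopy invariance (\cite[Theorem~4.1(ii)]{AHI}, cf.\ Remark~\ref{rmk:equivariance}) to the two sections of $\V_X(\scr O_X)$ given by the tautological coordinate $v$ and by $v+t$ — both of which become homotopic, relative to $X$, to the zero section inside $\P_X(\scr O_X\oplus\scr O_X)=X\times\P^1$ — one deduces that the two maps $\A^1_0\hookrightarrow\P^1$ and $\A^1_0\xrightarrow{u_t}\A^1_0\hookrightarrow\P^1$ are homotopic in $\MS_S$, i.e.\ $u_t$ restricted to this chart is homotopically trivial. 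The remaining task is to promote this to the statement that $u_t\simeq\id$ on all of $\P^1$ (equivalently $\bar u_t=\id$), and this is where the non-$\A^1$-invariance of $\MS$ bites: $u_t$ does \emph{not} preserve the complementary chart $\A^1_\infty$ (it carries it to $\P^1\setminus\{t\}$), so the naive ``work chart by chart'' move fails, and likewise the obvious $\A^1$-family $s\mapsto u_{st}$ is only a naive $\A^1$-homotopy. The way I would close this is to exploit that $\Th_S(\scr O)$ is invertible: the family $s\mapsto u_{st}$ determines a map $\A^1_+\otimes\Th_S(\scr O)\to\Th_S(\scr O)$, hence an element $h\in\pi_0\Map_{\MS_S}(\A^1_+,\1)=\pi_{0,0}\1(\A^1_S)$ with $0^*h=\id$ and $1^*h=\bar u_t$; since $\underline\pi_{0,0}\1$ is $\A^1$-invariant (the failure of $\A^1$-invariance of $\MS$ does not reach the sphere in weight and degree $0$), $h$ is pulled back from $S$, so $0^*h=1^*h$ and $\bar u_t=\id$. (Alternatively one can argue Nisnevich-locally around the fixed point $\infty$, which $u_t$ does preserve, together with the chart computation above.)

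\emph{Main obstacle.} Everything except the last paragraph is formal: the normality reduction, the passage to the invertible Thom space, and the identification of $\bar u_t$ with a translation whose chart-level triviality is a direct instance of $\P$-homotopy invariance. The genuine difficulty is precisely the step where, in classical $\A^1$-homotopy theory, one would simply say ``$u_t$ is $\A^1$-homotopic to the identity'': here that is worthless, and one must instead feed the chart-level homotopy into an argument that survives the absence of $\A^1$-invariance — either through the $\A^1$-invariance of the relevant low-degree homotopy sheaf of the sphere, or through a more hands-on manipulation of $\P$-homotopies near the fixed point. I expect that to be the subtle point of the proof.
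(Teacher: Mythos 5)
Your reduction to the single family $u_f=\left(\begin{smallmatrix}1&f\\0&1\end{smallmatrix}\right)$, $f\in\scr O(S)$, via normality of the subgroup of trivially-acting matrices and conjugation by the swap, is exactly the paper's first (and only explicit) step. You also correctly diagnose where the real difficulty lies: triviality of $u_f$ on the chart $\A^1_0$ is an instance of $\P$-homotopy invariance, but $u_f$ does not preserve the complementary chart, so this does not by itself trivialize the action on $\P^1_+$.

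However, the step you use to close the gap is itself a gap. You invoke the $\A^1$-invariance of $\spi_{0,0}\1$ to conclude that the class $h\in\pi_0\Map(\A^1_+,\1)$ interpolating between $\id$ and $\bar u_f$ is constant. No such statement is available in $\MS_S$: nothing in this framework computes or controls $\pi_0\Map(\A^1_{S+},\1)$, and the paper is carefully written so as never to assume any $\A^1$-invariance of homotopy groups of the sphere (cf.\ Remark~\ref{rmk:KMW}, where even a retraction of $\K^\MW_*(S)\to\pi_0\Map(\1,(\Sigma^{-1}\P^1)^{\otimes *})$ is only obtained after $\A^1$-localization and under hypotheses on $S$). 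If this $\A^1$-invariance were known, large parts of the paper would simplify dramatically. Your parenthetical fallback (``argue Nisnevich-locally around $\infty$'') is not a proof either: near $\infty$ the action is $w\mapsto w/(1+fw)$, and Nisnevich-local triviality of an automorphism of $\P^1_+$ does not follow from anything you have established. The actual resolution is that the \emph{proof} of $\P$-homotopy invariance in \cite[Theorem 4.1(ii)]{AHI} does more than its statement: it explicitly constructs a homotopy from the shear automorphism $\left(\begin{smallmatrix}\id&\sigma\\0&1\end{smallmatrix}\right)$ of the whole compactification $\P_X(\scr E\oplus\scr O)$ to the identity (this is how the two sections are shown to be homotopic in the first place), and specializing to $\scr E=\scr O$, $X=S$, $\sigma=f$ gives precisely the triviality of $u_f$ on $\P^1_+$. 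By using $\P$-homotopy invariance only as a black-box statement about sections, you discard exactly the global input you need and are then forced into the illegitimate $\A^1$-invariance claim.
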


\begin{proof}
	It suffices to show that the matrix $\left(\begin{smallmatrix}1 & f \\ 0 & 1\end{smallmatrix}\right)$ acts trivially for all $f\in \scr O(S)$.
	This is explicitly done in the proof of $\P$-homotopy invariance \cite[Theorem 4.1(ii)]{AHI}.
\end{proof}

The following result is a consequence of the already established Milnor–Witt relations when $\pi_0\scr O(S)$ is a field or a local ring whose residue field has at least $4$ elements (see Remark~\ref{rmk:KMW}(ii)), but not in general: 

\begin{proposition}
	\label{prop:a^2=1}
	For any $a\in\scr O(S)^\times$, we have $\langle a^2\rangle=1$ in $\MS_S$.
\end{proposition}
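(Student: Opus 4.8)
The plan is to realise $\langle a^2\rangle$ as the automorphism induced by a diagonal matrix acting on $\P^1$ and then to split that matrix into two pieces that act trivially for elementary reasons. For any unit $c\in\scr O(S)^\times$, the automorphism $\langle c\rangle$ is by construction induced by multiplication by $c$ on $(\P^1,\infty)$, that is, by the matrix $\left(\begin{smallmatrix}c&0\\0&1\end{smallmatrix}\right)\in\GL_2(S)$. Applying this with $c=a^2$ and using the factorization
\[
\begin{pmatrix}a^2&0\\0&1\end{pmatrix}=\begin{pmatrix}a&0\\0&a^{-1}\end{pmatrix}\cdot\begin{pmatrix}a&0\\0&a\end{pmatrix},
\]
in which both factors fix the point $\infty\in\P^1$, reduces the claim to showing that each factor acts trivially.

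The scalar factor $a\cdot I_2$ acts as the identity on $\P^1$ already as an $S$-scheme, since scalars act trivially on projective space; hence it induces the identity on $(\P^1,\infty)$ and therefore on $\1$. The first factor $\left(\begin{smallmatrix}a&0\\0&a^{-1}\end{smallmatrix}\right)$ equals, by the computation recorded just before Lemma~\ref{lem:E2}, the commutator $\bigl[\left(\begin{smallmatrix}a&0\\0&1\end{smallmatrix}\right),\left(\begin{smallmatrix}0&1\\-1&0\end{smallmatrix}\right)\bigr]$, in which $\left(\begin{smallmatrix}0&1\\-1&0\end{smallmatrix}\right)\in\rm E_2(S)$; so it lies in the normal closure $\rm E_2'(S)$ and, by Lemma~\ref{lem:E2}, acts trivially on $\P^1_+$ in $\h\MS_S$. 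Composing the two actions then gives $\langle a^2\rangle=\id$.

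The one point requiring a word is the passage from ``acts trivially on $\P^1_+$'' to ``acts trivially on $(\P^1,\infty)$'': any automorphism of $\P^1$ fixing $\infty$ respects the canonical decomposition $\P^1_+\simeq\1\oplus(\P^1,\infty)$ and restricts to the identity on the $\1$-summand, so triviality on $\P^1_+$ forces triviality on the reduced summand $(\P^1,\infty)\simeq\Sigma_{\P^1}\1$, hence on $\1$. I do not expect a genuine obstacle here: the proposition is essentially a corollary of Lemma~\ref{lem:E2}, and the only real subtlety, highlighted by the remark preceding the statement, is that $\langle a^2\rangle=1$ does not follow from the Milnor--Witt relations of Theorems~\ref{thm:easy-MW} and~\ref{thm:steinberg} for a general base, which is exactly why the argument proceeds through elementary matrices rather than appealing to those relations.
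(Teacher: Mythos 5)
Your proposal is correct and follows essentially the same route as the paper: the paper's proof identifies $\langle a^2\rangle$ as a retract of the endomorphism of $\P^1_+$ induced by $\left(\begin{smallmatrix}a&0\\0&a^{-1}\end{smallmatrix}\right)\in\rm E_2'(S)$ and invokes Lemma~\ref{lem:E2}, and your factorization through the scalar matrix $\left(\begin{smallmatrix}a&0\\0&a\end{smallmatrix}\right)$ merely makes explicit why $\mathrm{diag}(a,a^{-1})$ and $\mathrm{diag}(a^2,1)$ induce the same automorphism of $\P^1$. Your closing remark on passing from $\P^1_+$ to the reduced summand $(\P^1,\infty)$ is exactly the ``retract'' step in the paper's argument.
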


\begin{proof}
	The endomorphism $\langle a^2\rangle$ of $(\P^1,\infty)$ is a retract of the endomorphism of $\P^1_+$ induced by the matrix $\left(\begin{smallmatrix}a & 0 \\ 0 & a^{-1}\end{smallmatrix}\right)$, which lies in $\rm E_2'(S)$, so the result follows from Lemma~\ref{lem:E2}.
\end{proof}

\begin{corollary}[The Grothendieck–Witt relations]
	\label{cor:GW}
	For any derived scheme $S$, the following relations hold in the group $\pi_0\Map(\1_S,\1_S)$:
	\begin{enumerate}
		\item $\langle ab^2\rangle = \langle a\rangle$ for all $a,b\in\scr O(S)^\times$.
		\item $\langle a\rangle+\langle -a\rangle=\langle 1\rangle + \langle -1\rangle$ for all $a\in\scr O(S)^\times$.
		\item $\langle a\rangle +\langle b\rangle = \langle a+b\rangle + \langle (a+b)ab\rangle$ for all $a,b\in\scr O(S)^\times$ with $a+b\in\scr O(S)^\times$.
	\end{enumerate}
\end{corollary}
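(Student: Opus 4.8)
The plan is to deduce all three relations formally from results already established, so that no genuinely new argument is needed here.

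For (i), I would use that $\langle\ph\rangle$ is multiplicative, i.e.\ $\langle ab\rangle=\langle a\rangle\langle b\rangle$ in the ring $\pi_0\Map(\1_S,\1_S)$ (as recorded before Lemma~\ref{lem:brackets}). Then $\langle ab^2\rangle=\langle a\rangle\langle b\rangle^2=\langle a\rangle\langle b^2\rangle$, and Proposition~\ref{prop:a^2=1} gives $\langle b^2\rangle=1$, hence $\langle ab^2\rangle=\langle a\rangle$. Statement (ii) requires nothing: it is literally Corollary~\ref{cor:GW2}.

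For (iii), I would start from Corollary~\ref{cor:steinberg}, which for units $a,b$ with $a+b\in\scr O(S)^\times$ gives
\[
\langle a\rangle+\langle b\rangle=\langle a+b\rangle+\left\langle\frac{ab}{a+b}\right\rangle,
\]
and it then suffices to identify $\langle \frac{ab}{a+b}\rangle$ with $\langle (a+b)ab\rangle$. Writing $\frac{ab}{a+b}=(a+b)ab\cdot\big((a+b)^{-1}\big)^2$ as a product of units and applying (i) with the square factor $\big((a+b)^{-1}\big)^2$ yields $\langle \frac{ab}{a+b}\rangle=\langle (a+b)ab\rangle$, which is the asserted relation.

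The substantive content is entirely upstream: Proposition~\ref{prop:a^2=1} (feeding (i) and (iii)), Corollary~\ref{cor:GW2} (which is (ii)), and the Steinberg relation through Corollary~\ref{cor:steinberg} (feeding (iii)). Given those, there is no real obstacle; the only care needed is bookkeeping — tracking that every $\langle\ph\rangle$ appearing is an invertible element of $\pi_0\Map(\1_S,\1_S)$, so that multiplicativity may be used freely, and remembering that the three identities are equalities in the additive group structure of that endomorphism group.
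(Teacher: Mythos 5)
Your proposal is correct and follows exactly the paper's own argument: (i) from Proposition~\ref{prop:a^2=1} together with multiplicativity of $\langle\ph\rangle$, (ii) as a restatement of Corollary~\ref{cor:GW2}, and (iii) by combining Corollary~\ref{cor:steinberg} with (i) to replace $\langle \frac{ab}{a+b}\rangle$ by $\langle (a+b)ab\rangle$. The paper's proof is just a terser version of the same deductions, so there is nothing to add.
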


\begin{proof}
	(i) follows from Proposition~\ref{prop:a^2=1}, (ii) was already proved in Corollary~\ref{cor:GW2}, and (iii) follows from Corollary~\ref{cor:steinberg} and (i).
\end{proof}

\begin{definition}\label{def:unit-sum}
	Let $R$ be a commutative ring and let $n\geq 0$.
	An element $a\in R^\times$ is called a \emph{unit sum of squares} of exponent $\leq n$ if it is a square or if $n\geq 1$ and $a=b+c$, where $b,c\in R^\times$ are unit sums of squares of exponent $\leq n-1$.
\end{definition}

\begin{lemma}\label{lem:unit-sum}
	If $a\in\scr O(S)^\times$ is a unit sum of squares of exponent $\leq n$, then $2^n(\langle a\rangle-1)=0$ in $\MS_S$.
\end{lemma}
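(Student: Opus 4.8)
The plan is a straightforward induction on the exponent $n$, with the Grothendieck–Witt relation of Corollary~\ref{cor:GW}(iii) as essentially the only input. Set $R=\pi_0\Map(\1_S,\1_S)$, and for a unit $u\in\scr O(S)^\times$ write $\phi(u)=\langle u\rangle-1\in R$; the goal is to prove $2^n\phi(a)=0$.

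The base case (and, for every $n$, the ``square'' alternative in Definition~\ref{def:unit-sum}) is immediate: if $a$ is a square, then $\langle a\rangle=\langle 1\rangle=1$ by Corollary~\ref{cor:GW}(i), so $\phi(a)=0$; in particular this settles $n=0$. For the inductive step, let $n\geq 1$ and write $a=b+c$ with $b,c\in\scr O(S)^\times$ unit sums of squares of exponent $\leq n-1$, so that $2^{n-1}\phi(b)=2^{n-1}\phi(c)=0$ by the inductive hypothesis. Since $b$, $c$ and $b+c=a$ are units, Corollary~\ref{cor:GW}(iii) applied to the pair $(b,c)$ gives
\[
\langle b\rangle+\langle c\rangle=\langle a\rangle+\langle(b+c)bc\rangle=\langle a\rangle+\langle abc\rangle,
\]
equivalently $\phi(b)+\phi(c)=\phi(a)+\phi(abc)$ in $R$.

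The step requiring a small idea is to observe that the ``error term'' $\phi(abc)$ contains only a single copy of $\phi(a)$, the rest being controllable. Using multiplicativity $\langle xy\rangle=\langle x\rangle\langle y\rangle$ one has $\langle bc\rangle-1=\langle b\rangle(\langle c\rangle-1)+(\langle b\rangle-1)$, hence $2^{n-1}(\langle bc\rangle-1)=0$; and $\phi(abc)=\langle a\rangle\langle bc\rangle-1=\langle a\rangle(\langle bc\rangle-1)+\phi(a)$. Substituting into the identity above gives the key relation
\[
2\,\phi(a)=\phi(b)+\phi(c)-\langle a\rangle(\langle bc\rangle-1).
\]
Multiplying by $2^{n-1}$ kills the right-hand side termwise — using the inductive hypothesis on $\phi(b),\phi(c)$, the relation $2^{n-1}(\langle bc\rangle-1)=0$, and the fact that $2^k$-torsion in $R$ is preserved by multiplication by any element — so $2^n\phi(a)=0$ and the induction closes.

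I do not anticipate a genuine obstacle; the only point to be careful about is that each argument of $\langle\,\cdot\,\rangle$ occurring above is really a unit of $\scr O(S)$, which holds because $b$, $c$ and $a=b+c$ are units and products of units are units, so that Corollary~\ref{cor:GW} is applicable throughout.
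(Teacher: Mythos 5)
Your proof is correct and is exactly the argument the paper intends: the paper's own proof is the one-line remark that the lemma ``follows from Corollary~\ref{cor:GW}(i,iii) by induction on $n$,'' and your write-up supplies precisely that induction, with GW(i) handling the square case and GW(iii) plus multiplicativity of $\langle\,\cdot\,\rangle$ giving the relation $2\phi(a)=\phi(b)+\phi(c)-\langle a\rangle(\langle bc\rangle-1)$ that closes the inductive step.
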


\begin{proof}
	This follows from Corollary~\ref{cor:GW}(i,iii) by induction on $n$.
\end{proof}

\begin{proposition}\label{prop:-1}
	If $-1$ is a unit sum of squares in each henselian local ring of $S$, then $\langle -1\rangle=1$ in $\MS_S[\tfrac 12]$.
\end{proposition}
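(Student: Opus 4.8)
The strategy is to recast the equality $\langle-1\rangle=1$ as the vanishing of a single object and then to verify that vanishing Nisnevich-locally, where it becomes an instance of Lemma~\ref{lem:unit-sum}. Recall from the introduction that $\1[\tfrac12]=\1[\tfrac12]^+\times\1[\tfrac12]^-$, with $\langle-1\rangle$ acting by $+1$ on the first factor and by $-1$ on the second; since $2$ is invertible, $\langle-1\rangle$ is the identity of $\1$ in $\MS_S[\tfrac12]$ if and only if $\1_S[\tfrac12]^-=0$. Moreover, for any morphism $f\colon S'\to S$ the base-change functor $f^*$ is symmetric monoidal and sends $\langle-1\rangle$ to $\langle-1\rangle$, hence sends $\1_S[\tfrac12]^-$ to $\1_{S'}[\tfrac12]^-$; in particular the formation of $\1[\tfrac12]^-$ commutes with arbitrary base change. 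Since the hypothesis on $S$ passes to Zariski opens, Zariski descent for $\MS$ lets us assume $S$ quasi-compact.

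Next, the local input. Fix a point $s\in S$. By hypothesis $-1$ is a unit sum of squares of some exponent $\leq n$ in the henselian local ring $\scr O_{S,s}^{\rm h}$. Unwinding Definition~\ref{def:unit-sum}, such a witness is a finite amount of data: finitely many elements of $\scr O_{S,s}^{\rm h}$, subject to finitely many additive relations $x=x_1+x_2$, finitely many relations $x=y^2$, and finitely many invertibility conditions. As $\scr O_{S,s}^{\rm h}=\colim_U\scr O(U)$ is the filtered colimit over the affine Nisnevich neighborhoods $U\to S$ of $s$, all of this data is defined and all of these identities hold already over some such $U_s$. Thus $-1$ is a unit sum of squares of exponent $\leq n$ in $\scr O(U_s)$, and Lemma~\ref{lem:unit-sum} over $U_s$ yields $2^n(\langle-1\rangle-1)=0$ in $\pi_0\Map_{\MS_{U_s}}(\1,\1)$; after inverting $2$, $\langle-1\rangle=1$ in $\MS_{U_s}[\tfrac12]$, that is, $\1_{U_s}[\tfrac12]^-=0$.

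Finally, descent. The family $\{U_s\to S\}_{s\in S}$ is a Nisnevich covering of $S$, and as $S$ is quasi-compact finitely many members $U_{s_1},\dots,U_{s_m}$ already cover. Let $W=\coprod_j U_{s_j}$ and let $W^\bullet$ be the \v{C}ech nerve of $W\to S$. By Nisnevich descent for $\MS$, hence for $\MS[\tfrac12]$, the restriction functors induce an equivalence $\MS_S[\tfrac12]\simeq\lim_{[k]\in\Delta}\MS_{W^k}[\tfrac12]$, under which $\1_S[\tfrac12]^-$ corresponds to the compatible family $(\1_{W^k}[\tfrac12]^-)_k$. Since the zero object of the limit is the family of zero objects, it is enough to show $\1_{W^k}[\tfrac12]^-=0$ for every $k$. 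But each $W^k$ is a finite disjoint union of schemes, each mapping to some $U_{s_j}$, and $\MS$ takes finite disjoint unions to products while $\1[\tfrac12]^-$ commutes with base change; hence $\1_{W^k}[\tfrac12]^-$ is built from the objects $\1_{U_{s_j}}[\tfrac12]^-$, which vanish by the previous paragraph. Therefore $\1_S[\tfrac12]^-=0$, i.e.\ $\langle-1\rangle=1$ in $\MS_S[\tfrac12]$.

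I expect the crux to be the step feeding into Lemma~\ref{lem:unit-sum}: recognizing that ``being a unit sum of squares'' is a finitary condition is precisely what permits descent from the henselian local rings of $S$ to an actual finite Nisnevich cover, without invoking any continuity of $\MS$ along cofiltered limits of schemes. The final gluing is then routine, and it matters that it is carried out for the object $\1[\tfrac12]^-$ rather than directly for the endomorphism $\langle-1\rangle$: the former vanishes as soon as its restrictions along a cover do, whereas the latter would require analyzing $\pi_0$ of a totalization of mapping spaces.
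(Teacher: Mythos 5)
Your proof is correct and follows the same basic strategy as the paper's: reduce to a Nisnevich-local verification and then apply Lemma~\ref{lem:unit-sum}. The paper phrases the local statement as invertibility of $1+\langle-1\rangle$ (using $\langle-1\rangle^2=1$, a Nisnevich-local condition) and passes to the henselian local rings by continuity of $\MS$, whereas you check vanishing of the idempotent summand $\1[\tfrac 12]^-$ along a cover and replace continuity by the observation that a unit-sum-of-squares witness is finitary and hence spreads out to an actual Nisnevich neighborhood; both mechanisms are sound, and yours has the mild advantage of only needing joint conservativity of the pullbacks along a Nisnevich cover rather than continuity of $\MS$.
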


\begin{proof}
	Since $\langle -1\rangle^2=1$, the equality $\langle -1\rangle=1$ holds in $\MS_S[\tfrac 12]$ if and only if $1+\langle -1\rangle$ is a unit, which is a Nisnevich-local property.
	By continuity, we may thus assume that $-1$ is a unit sum of squares in $\scr O(S)$.
	Then the result follows from Lemma~\ref{lem:unit-sum}.
\end{proof}

\begin{remark}\label{rmk:unit-sum}
	Let $R$ be a henselian local ring with residue field $k$. 
	Then $-1$ is a unit sum of squares in $R$ in the following cases:
	\begin{enumerate}
		\item $k$ has characteristic zero and is not formally real.
		\item $k$ has odd characteristic.
		\item $k$ contains $\F_{2^n}$ for some $n\geq 2$. 
	\end{enumerate}
	In cases (i) and (ii), $-1$ is a sum of squares in $k$ and we can lift this sum to $R$, while in case (iii), $-1$ is a unit sum of $(2^n-1)$st roots of unity in $R$.
	If $\F_2$ is algebraically closed in $k$,
	the property may or may not hold: 
	it holds in $\Z[i]_{(1+i)}^{\rm h}$ but fails in $\Z/4$. 
	We do not know whether $\langle -1\rangle =1$ in $\MS_{\Z/4}[\tfrac 12]$.
\end{remark}

\begin{remark}[Milnor–Witt K-theory]
	\label{rmk:KMW}
	Let $S$ be a derived scheme.
	\begin{enumerate}
		\item Let $\K^\MW_*(S)$ be the graded ring generated by $\eta$ in degree $-1$ and $\{[a]\suchthat a\in \pi_0\scr O(S)^\times\}$ in degree $1$, modulo the Hopf relations of Theorem~\ref{thm:easy-MW} and the Steinberg relations of Theorem~\ref{thm:steinberg}. By these theorems, there is a map of graded rings
	\[
	\K^\MW_*(S) \to \pi_0\Map(\1_S,(\Sigma^{-1}\P^1)^{\otimes *}).
	\]
	Composing with $\A^1$-localization $\MS_S\to \MS_S^{\A^1}$, we recover the map constructed by Druzhinin in \cite[Theorem 1.6]{DruzhininMW}.
	Furthermore, if $S$ is a field or a regular local ring over an infinite field of characteristic not $2$, $\A^1$-localization provides a retraction of this map, by Morel's theorem \cite[Corollary 6.3]{Morel} and \cite[Theorem 6.3]{GilleScullyZhong}.
	\item In \cite[Lemmas 4.14 and 4.16]{SchlichtingEuler}, Schlichting gives explicit presentations of the ring $\K^\MW_0(S)$ and of the module $\K^\MW_1(S)$. We have in particular $\K^\MW_0(S)=\Z[\pi_0\scr O(S)^\times]/I$, where $\pi_0\scr O(S)^\times\to \K^\MW_0(S)$ sends $a$ to $\langle a\rangle =\eta[a]+1$ and $I$ is the ideal generated by the relations (ii) and (iii) of Corollary~\ref{cor:GW}. 
	Let $\tK^\MW_*(S)$ be the quotient of $\K^\MW_*(S)$ by the further relations $\eta[a^2]=0$ for all $a\in\scr O(S)^\times$, so that $\tK^\MW_0(S)=\Z[\pi_0\scr O(S)^\times]/\bar I$ where $\bar I$ is generated by the relations (i)–(iii) of Corollary~\ref{cor:GW}.
	We thus get a map of graded rings
	\[
	\tK^\MW_*(S) \to \pi_0\Map(\1_S,(\Sigma^{-1}\P^1)^{\otimes *}).
	\]
	If $\pi_0\scr O(S)$ is a field or a local ring whose residue field has at least $4$ elements, these further relations already hold in $\K^\MW_*(S)$ by \cite[Lemma 4.4(2)]{SchlichtingEuler}, so that $\K^\MW_*(S)=\tK^\MW_*(S)$.
	\end{enumerate}
\end{remark}

\begin{remark}[Grothendieck–Witt theory]
	In forthcoming work, we define the ``genuine'' Grothendieck–\allowbreak Witt theory of qcqs derived schemes (extending \cite[Definition 3.3.3]{CHN} to non-classical schemes) and show that it is represented by an absolute motivic $\E_\infty$-ring spectrum $\mathrm{KO}$.
	Hence, if $S$ is qcqs, we have maps of commutative rings
	\[
	\begin{tikzcd}
		\tK^\MW_0(S) \ar{r} \ar{dr} & \pi_0\Map(\1_S,\1_S) \ar{d} \\
		& \pi_0\Map(\1_S,\mathrm{KO})\rlap{${}=\pi_0\GW(S)$,}
	\end{tikzcd}
	\]
	where the horizontal map was defined in Remark~\ref{rmk:KMW}(ii).
	The vertical map is given by $\A^1$-localization if $S$ is a field or a regular semilocal ring over a field of characteristic not $2$ \cite[Theorem~10.12]{norms}.
	The diagonal map sends $\langle a\rangle$ to the symmetric bilinear form $(\scr O_S,(x,y)\mapsto axy)$, and it is an isomorphism if $S$ is a field, a local ring whose residue field has at least $3$ elements \cite[Theorem~1.3]{RogersSchlichting}, or a connected semilocal ring whose residue fields have at least $7$ elements and characteristic not $2$ \cite[Theorem 2.2]{gille2015quadratic}. 
	In these cases, $\pi_0\GW(S)$ is thus a retract of $\pi_0\Map(\1_S,\1_S)$ as a ring.
\end{remark}

\section{The positive eigenspace of the rational motivic sphere}

Let $S$ be a derived scheme.
There is a splitting $\1[\tfrac 12]=\1[\tfrac 12]^+\times \1[\tfrac 12]^-$ in $\CAlg(\MS_S)$, where $\1[\tfrac 12]^{\pm}$ are the $\pm 1$-eigenspaces of the swap automorphism on $\P^1\otimes \P^1$. It induces a splitting of categories
\[
\MS_S[\tfrac 12] = \MS_S[\tfrac 12]^+ \times \MS_S[\tfrac 12]^-.
\]
By Lemma~\ref{lem:swap-GW}, the swap automorphism of $\P^1\otimes\P^1$ is $\langle -1\rangle$.
Thus, a $2$-periodic motivic spectrum $E$ is a $\1[\tfrac 12]^+$-module (resp.\ a $\1[\tfrac 12]^-$-module) if and only if $\langle -1\rangle$ acts as the identity (resp.\ as minus the identity) on $E$.

\begin{remark}\label{rmk:1/2+}
	\leavevmode
	\begin{enumerate}
		\item In $\MS_S[\tfrac 12]^+$, we have $\epsilon=-1$ and hence $\eta=0$ by~\eqref{eqn:epsilon}.
		\item If $E\in \MS_S$ is orientable, then $E[\tfrac 12]$ is a $\1[\tfrac 12]^+$-module: in fact, by the projective bundle formula, 
	any automorphism of $\scr E\in\Vect(S)$ acts as the identity on $\Th_S(\scr E)\otimes E$ \cite[\sect 6]{AHI}.
	\end{enumerate}
\end{remark}

\begin{theorem}\label{thm:Q+}
	For any derived scheme $S$, $\1_\Q^+\in \MS_S$ is canonically oriented.
\end{theorem}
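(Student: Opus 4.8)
The plan is to exhibit a \emph{canonical} orientation of $\1_\Q^+$ by showing that, on the $+$-part, the automorphisms of trivial vector bundles act trivially on the associated Thom spaces, the input that makes this possible being the vanishing of the Hopf map. Recall from \cite[\sect 6]{AHI} that an orientation of a motivic $\E_\infty$-ring $E\in\CAlg(\MS_S)$ amounts to a coherent, multiplicative family of Thom isomorphisms $\Th_X(\scr E)\otimes E\simeq(\P^1)^{\otimes\rk\scr E}\otimes X_+\otimes E$, natural in $(X,\scr E)$ and restricting to the tautological identification when $\scr E$ is trivial; equivalently, it is a coherent trivialization, multiplicative in $n$, of the action of $\GL_n$ (by automorphisms of $\scr O^n$) on $\Th_S(\scr O^n)\otimes E\simeq(\P^1)^{\otimes n}\otimes E$, the general Thom isomorphisms being recovered from it by descent along frame bundles. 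I will produce this data for $E=\1_\Q^+$.

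In $\1_\Q^+$ the involution $\langle -1\rangle$ is the identity, so $\epsilon=-1$ and $\eta=0$ by~\eqref{eqn:epsilon}; in particular $\langle a\rangle=1$ for every unit $a$, since Corollary~\ref{cor:GW2} gives $(\langle a\rangle-1)(1+\langle -1\rangle)=0$ while $1+\langle -1\rangle=2$ is invertible. The decisive point is, however, the scheme-level statement: by~\eqref{eqn:G_m-nonunital}, the action map of $\G_\m$ on $\P^1$ factors, as observed in the proof of Proposition~\ref{prop:hopf}, as
\[
\G_{\m+}\otimes\P^1\onto(\1\oplus\Sigma^{-1}\P^1)\otimes\P^1\xrightarrow{\ \id+\eta\ }\P^1.
\]
Tensoring with $\1_\Q^+$ and using $\eta=0$, the $\G_\m$-action on $\P^1\otimes\1_\Q^+$ therefore coincides with the trivial action, namely the composite $\G_{\m+}\otimes\P^1\to\1\otimes\P^1=\P^1$ induced by the augmentation $\G_{\m+}\to\1$; and this trivialization is compatible with the monoid structure because the splitting $r\colon\G_\m\to\Sigma^{-1}\P^1$ is $\G_\m$-equivariant. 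Descending along $\G_\m$-torsors then gives a canonical, multiplicative Thom isomorphism $\Th_X(\scr L)\otimes\1_\Q^+\simeq\P^1\otimes X_+\otimes\1_\Q^+$ for every line bundle $\scr L$, which is the rank-one part of the orientation data.

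For ranks $n\geq 2$, I would trivialize the $\GL_n$-action on $(\P^1)^{\otimes n}\otimes\1_\Q^+$ by using that $\GL_n$ is generated, Zariski-locally as a sheaf of groups, by its diagonal torus $\G_\m^n$ and its elementary unipotent subgroups: the torus acts trivially by the previous paragraph applied to each tensor factor, while the unipotents act trivially already on $\Th_S(\scr O^n)$ in $\h\MS_S$, by the $\P$-homotopy argument proving Lemma~\ref{lem:E2} (i.e.\ \cite[Theorem~4.1(ii)]{AHI}). The hard part is then to organize these trivializations, over all $n$, into a single coherent, multiplicative family — equivalently, to promote them to an $\E_\infty$-ring map $\MGL\to\1_\Q^+$; since every trivialization in play is canonical and $\eta$ vanishes, I expect the higher coherences to be forced, but making this precise requires working systematically within the Thom-spectrum and orientation formalism of \cite[\sect 6]{AHI}. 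Granting the coherence, canonicity of the orientation is immediate, as no choices enter and the construction is natural in $S$, depending only on the canonical idempotent cutting out the $+$-part.
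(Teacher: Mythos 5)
There is a genuine gap, and you have in fact flagged it yourself: the sentence ``I expect the higher coherences to be forced, but making this precise requires working systematically within the Thom-spectrum and orientation formalism'' is not a step of a proof — it is the entire content of the theorem. The $\pi_0$-level facts you assemble ($\eta=0$ and $\langle a\rangle=1$ on $\1_\Q^+$, triviality of elementary matrices on $\P^1_+$) only trivialize the relevant actions in the homotopy category $\h\MS_S$. An orientation is not a collection of such trivializations: it is a coherent, multiplicative datum, e.g.\ an $\E_\infty$-map out of $\Pic_+$ (or $\MGL$) extending the tautological map on $\P^1_+$. Nothing in your argument addresses the obstructions to rigidifying homotopy-category trivializations into such a datum; in particular, even your rank-one step — descending the trivialization of the $\G_\m$-action along $\G_\m$-torsors to get Thom isomorphisms for line bundles — already requires an identification of $\G_{\m+}$-\emph{module} structures on $\P^1\otimes\1_\Q^+$, not merely an equality of action maps up to homotopy, and the phrase ``compatible with the monoid structure because $r$ is $\G_\m$-equivariant'' does not supply this. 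The same issue recurs, compounded, for the $\GL_n$-actions and their compatibility over all $n$.

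The paper's proof (following Arndt) is designed precisely to make these coherences automatic, by never leaving the world of $\E_\infty$-rings. It uses the $\E_\infty$-retraction $Q\to\G_{\m+}\to Q$ of Construction~\ref{cst:Q}, identifies $Q$ with the \emph{free} augmented $\E_\infty$-algebra $\Sym(\Sigma^{-1}\P^1)$ after passing to $(\MS_S)_\Q^+$ (Lemma~\ref{lem:Sym-square-zero}(ii), which is where rationality and the sign action of $\Sigma_n$ enter — an input entirely absent from your sketch), and then applies suspension in $\CAlg((\MS_S)_\Q^+)_{/\1}$ to convert the retraction $\Sym(\Sigma^{-1}\P^1)\to\G_{\m+}$ into a retraction $\Sym(\P^1)\to\Pic_+$, i.e.\ an $\E_\infty$-map $\Pic_+\to\Sym(\Sigma_{\P^1}\1_\Q^+)$. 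Freeness of $\Sym$ is what forces all multiplicative coherences, and the one-step delooping $\G_\m\rightsquigarrow\Pic$ is what replaces your ad hoc descent along torsors and frame bundles. If you want to salvage your approach, you would need to carry out your $\GL_n$ generation-and-relations argument at the level of the $\E_\infty$-monoid $\coprod_n B\GL_n$ (or $K(\Vect)$) rather than in $\h\MS_S$, which is a substantially harder task than the paper's argument.
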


\begin{corollary}\label{cor:Q+}
	For any derived scheme $S$, there is a unique isomorphism $\1_\Q^+=\HH\Q$ in $\CAlg(\MS_S)$.
\end{corollary}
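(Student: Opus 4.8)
The plan is to argue entirely with idempotent $\E_\infty$-algebras. Recall from \cite[\sect 9]{atiyah} that $\HH\Q$ is an idempotent $\E_\infty$-algebra over the rational motivic sphere $\1_\Q$, and $\1_\Q^+$ is also one, being a factor of $\1_\Q=\1_\Q^+\times\1_\Q^-$. Now idempotent $\E_\infty$-algebras over a fixed base form a poset: the mapping space between any two of them is empty or contractible. Consequently the uniqueness asserted in the statement is automatic, and to produce the isomorphism it suffices to exhibit one $\E_\infty$-ring map in each direction — equivalently, to show that $\HH\Q$ is a $\1_\Q^+$-module and that $\1_\Q^+$ is an $\HH\Q$-module; the poset property then forces both composites to be identities, so the two maps are mutually inverse.

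That $\HH\Q$ is a $\1_\Q^+$-module is the easy half. The spectrum $\HH\Q$ is orientable — being the weight-$0$ Adams summand of the (manifestly orientable) spectrum $\KGL_\Q$, equivalently the representing spectrum of rational motivic cohomology — so by Remark~\ref{rmk:1/2+}(ii) it is a $\1_\Q^+$-module; since its structure map from $\1_\Q$ is a ring map, the factor $\HH\Q\otimes_{\1_\Q}\1_\Q^-$ vanishes and the rationalized unit $\1_\Q\to\HH\Q$ factors uniquely through $\1_\Q\twoheadrightarrow\1_\Q^+$, giving the $\E_\infty$-map $\1_\Q^+\to\HH\Q$.

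The other half is the crux, and here Theorem~\ref{thm:Q+} is what makes it go: it equips $\1_\Q^+$ with a canonical orientation. I would then feed this into the description of $\HH\Q$-algebras furnished by \cite[\sect 9]{atiyah}: an orientation of a rational motivic $\E_\infty$-ring $E$ upgrades $E$ to an $\MGL_\Q$-algebra, and $\HH\Q$ is obtained from $\MGL_\Q$ by imposing additivity of the formal group law, so $E$ is an $\HH\Q$-algebra as soon as the formal group law it carries is additive. For $E=\1_\Q^+$ this holds automatically: either because, rationally, any orientation may be canonically renormalized to one with additive formal group law via the logarithm (Quillen's device), or because the relevant coefficient ring $\bigoplus_n\pi_{2n,n}\1_\Q^+$ is concentrated in weight $0$, the spectrum $\1_\Q^+$ being idempotent. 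Hence $\1_\Q^+$ is an $\HH\Q$-module, which completes the argument.

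I expect the main obstacle to be precisely this last step: matching the orientation produced by Theorem~\ref{thm:Q+} with the exact construction and universal property of $\HH\Q$ recorded in \cite{atiyah}, and ensuring that the renormalization of the orientation to an additive one is carried out compatibly with the $\E_\infty$-structure, not merely with the underlying homotopy ring. This is the point at which Cisinski–Déglise identify $\1_\Q^+$ with Beilinson motivic cohomology in \cite[\sect 16]{CD}, and the real content is to check that their argument survives the loss of $\A^1$-invariance, drawing only on the structural facts about $\HH\Q$ and $\KGL$ established in \cite{atiyah}. The rest — the poset formalism, the easy half, and the bookkeeping with the $\pm$-splitting — is formal.
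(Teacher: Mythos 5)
Your proposal is correct and follows essentially the same route as the paper: uniqueness via the idempotent-algebra (poset) formalism of \cite[Theorem 9.16]{atiyah}, the easy direction via orientability of $\HH\Q$ and Remark~\ref{rmk:1/2+}(ii), and the crux via the canonical orientation of Theorem~\ref{thm:Q+} combined with the characterization of $\HH\Q$-modules in \cite[Proposition 9.19]{atiyah}. The only caveat is that your worry about renormalizing to an additive formal group law is moot (and your ``coefficients concentrated in weight $0$'' fallback would be circular): \cite[Proposition 9.19]{atiyah} already says that \emph{every} rational $\MGL$-module is an $\HH\Q$-module, so the orientation alone suffices.
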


\begin{proof}
	It suffices to show that $\1_\Q^+$ is an $\HH\Q$-module, since both $\1_\Q^+$ and $\HH\Q$ are idempotent algebras \cite[Theorem 9.16]{atiyah}.
	This follows from Theorem~\ref{thm:Q+} and \cite[Proposition 9.19]{atiyah}.
\end{proof}

For the proof of Theorem~\ref{thm:Q+}, we follow the strategy in Peter Arndt's thesis \cite[\sect 3.4.1]{ArndtThesis}.

\begin{lemma}\label{lem:Sym-square-zero}
	\leavevmode
	\begin{enumerate}
	\item The canonical map
	\[
	\bigoplus_{n\geq 0}\Sigma_{\P^1}^n\1\to \Sym(\P^1)
	\]
	becomes an isomorphism in $(\MS_S)_\Q^+$.
	\item The canonical map
	\[
	\1\oplus\Sigma^{-1}\P^1\to \Sym(\Sigma^{-1}\P^1)
	\]
	becomes an isomorphism in $(\MS_S)_\Q^+$.
	\end{enumerate}
\end{lemma}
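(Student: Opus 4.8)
The plan is to identify $\Sym(-)$ with the free $\E_\infty$-algebra functor, so that $\Sym(X)=\bigoplus_{n\ge0}X^{\otimes n}_{h\Sigma_n}$, and to compute these homotopy orbits rationally via the averaging idempotent. First I would record that $(\MS_S)_\Q^+$ is a stable, $\Q$-linear, idempotent-complete $\infty$-category (it is a module category over rational spectra), so that for any finite group $G$ and any $Y\in(\MS_S)_\Q^+$ equipped with a $G$-action, the projection $Y\to Y_{hG}$ splits and exhibits $Y_{hG}$ as the direct summand of $Y$ cut out by the idempotent $e_G=\frac1{|G|}\sum_{g\in G}g\in\pi_0\End(Y)$; this is the standard consequence of the semisimplicity of $\Q[G]$, the orbit functor being extension of scalars along $\Q[G]\to\Q$. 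Under this identification, the canonical maps in (i) and (ii) are the direct sums of the orbit maps $X^{\otimes n}\to X^{\otimes n}_{h\Sigma_n}$, ranging over all $n$ in (i) and over $n\le 1$ in (ii).

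Next I would pin down the $\Sigma_n$-representation on $X^{\otimes n}$ in $(\MS_S)_\Q^+$ for $X=\P^1$ and for $X=\Sigma^{-1}\P^1$. The action is generated by the symmetry isomorphisms $\tau_{X,X}$ applied to consecutive tensor factors, and in the positive eigenspace we have $\tau_{\P^1,\P^1}=\langle-1\rangle=\id$ (Lemma~\ref{lem:swap-GW} together with the very definition of $\MS_S[\tfrac12]^+$) and $\tau_{\Sigma^{-1}\P^1,\Sigma^{-1}\P^1}=\epsilon=-\id$ (Remark~\ref{rmk:1/2+}(i)). Since $\tau_{X,X}$ is $\pm$ the identity, every Coxeter generator of $\Sigma_n$ acts on $X^{\otimes n}$ by that same scalar $\pm1$, whence a permutation $\sigma$ acts by $(\pm1)^{\ell(\sigma)}$, i.e.\ trivially for $X=\P^1$ and through the sign character $\Sigma_n\to\{\pm1\}$ for $X=\Sigma^{-1}\P^1$ (the parity of a word length of $\sigma$ being exactly $\operatorname{sgn}\sigma$).

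Combining, $e_n$ acts on $X^{\otimes n}$ as the scalar $\frac1{n!}\sum_{\sigma\in\Sigma_n}\chi(\sigma)$, with $\chi$ the trivial character for $\P^1$ and the sign character for $\Sigma^{-1}\P^1$. For $\P^1$ this equals $1$ for every $n$, so each orbit map $(\P^1)^{\otimes n}\to(\P^1)^{\otimes n}_{h\Sigma_n}$ is an isomorphism and (i) follows. For $\Sigma^{-1}\P^1$ it equals $1$ when $n\in\{0,1\}$ and $0$ when $n\ge2$ (there are equally many even and odd permutations), so $(\Sigma^{-1}\P^1)^{\otimes n}_{h\Sigma_n}=0$ for $n\ge2$ and $\Sym(\Sigma^{-1}\P^1)=\1\oplus\Sigma^{-1}\P^1$, proving (ii).

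The main obstacle is purely the first step: carefully justifying that over a $\Q$-linear base the free $\E_\infty$-algebra is $\bigoplus_n X^{\otimes n}_{h\Sigma_n}$ with homotopy orbits computed by the averaging idempotent; the representation-theoretic computation and the two conclusions are then immediate. Equivalently, one may phrase the lemma as the assertion that in a $\Q$-linear symmetric monoidal stable $\infty$-category the free $\E_\infty$-algebra on an object $L$ with $\tau_{L,L}=\id$ is the ``polynomial algebra'' $\bigoplus_{n\ge0}L^{\otimes n}$, while on an object with $\tau_{L,L}=-\id$ it is the ``exterior algebra'' $\1\oplus L$, applied here to $L=\P^1$ and $L=\Sigma^{-1}\P^1$ in $(\MS_S)_\Q^+$.
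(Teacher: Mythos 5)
Your proposal is correct and follows essentially the same route as the paper: the paper likewise observes that $\Sigma_n$ acts trivially up to homotopy on $(\P^1)^{\otimes n}$ and via the sign on $(\Sigma^{-1}\P^1)^{\otimes n}$, and then concludes rationally that $(\P^1)^{\otimes n}\to\Sym^n(\P^1)$ is an isomorphism and $\Sym^n(\Sigma^{-1}\P^1)=0$ for $n\geq 2$. You merely make explicit the averaging-idempotent computation of homotopy orbits in a $\Q$-linear stable category that the paper leaves implicit in the word ``rationally''.
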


\begin{proof}
	By definition, the swap automorphism of $\P^1\otimes \P^1$ is the identity in $\MS_S[\tfrac 12]^+$.
	Thus, the group $\Sigma_n$ acts trivially (up to homotopy) on $(\P^1)^{\otimes n}$, hence acts via the sign on $(\Sigma^{-1}\P^1)^{\otimes n}$.
	Rationally, it follows that $(\P^1)^{\otimes n}\to \Sym^n(\P^1)$ is an isomorphism and that $\Sym^n(\Sigma^{-1}\P^1)=0$ for $n\geq 2$.
\end{proof}

\begin{proof}[Proof of Theorem~\ref{thm:Q+}]
	Recall the retract diagram of augmented $\E_\infty$-rings
	\[
	Q\xrightarrow{\partial} \G_{\m+}\xrightarrow{r} Q
	\]
	from Construction~\ref{cst:Q}, where $Q=\1\oplus\Sigma^{-1}\P^1$.
	Note that the composite
	\[
	\1\oplus\Sigma^{-1}\P^1 \to \Sym(\Sigma^{-1}\P^1) \xrightarrow\partial \G_{\m+} \xrightarrow r Q
	\]
	is an isomorphism, where $\partial$ and $r$ are maps in $\CAlg(\MS_S)_{/\1}$. After tensoring with $\1_\Q^+$, the first map becomes an isomorphism by Lemma~\ref{lem:Sym-square-zero}(ii). Hence, $\Sym(\Sigma^{-1}\P^1)$ is a retract of $\G_{\m+}$ in $\CAlg((\MS_S)_\Q^+)_{/\1}$.
	Taking suspensions in $\CAlg((\MS_S)_\Q^+)_{/\1}$, we deduce that $\Sym(\P^1)$ is a retract of $\Pic_+$ in $\CAlg((\MS_S)_\Q^+)_{/\1}$. We thus obtain an $\E_\infty$-map
	\begin{equation}\label{eqn:Pic-to-Sym}
	\Pic_+ \to \Sym(\Sigma_{\P^1}\1_\Q^+)
	\end{equation}
	extending the canonical map $\P^1_+\to \Sym(\Sigma_{\P^1}\1_\Q^+)$. In particular, $\1_\Q^+$ is canonically oriented.
\end{proof}

\begin{remark}
	By the universal property of the K-theory of derived schemes proved by Annala and Iwasa \cite[Theorem 5.3.3]{AnnalaIwasa2}, the $\E_\infty$-map~\eqref{eqn:Pic-to-Sym} induces a morphism of motivic $\E_\infty$-ring spectra
	\[
	\KGL\to \Sym(\Sigma_{\P^1}\1_\Q^+)[u^{-1}]= \bigoplus_{n\in \Z}\Sigma^n_{\P^1}\1_\Q^+,
	\]
	where $u\colon \P^1\to \Sym(\Sigma_{\P^1}\1_\Q^+)$ is the tautological map and the equality follows from Lemma~\ref{lem:Sym-square-zero}(i).
	Under the identification $\1_\Q^+=\HH\Q$ of Corollary~\ref{cor:Q+}, this recovers the Chern character of \cite[Remark~9.20]{atiyah}.
\end{remark}

\section{Beilinson motives}
\label{sec:beilinson}

We give several characterizations of the full subcategory $\Mod_{\HH\Q}(\MS_S)\subset\MS_S$.

\begin{proposition}\label{prop:1-etale}
	If $S$ is any derived scheme, then $\1^\et[\tfrac 12]^-=0$ in $\MS_S^\et$. Hence, $\1^\et_\Q=\HH\Q$.
\end{proposition}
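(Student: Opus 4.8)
The plan is to adapt the proof of Proposition~\ref{prop:-1} to the étale topology. First I would observe that it suffices to show $\langle-1\rangle=1$ on $\1^\et[\tfrac12]$: since $\langle-1\rangle$ acts as $-1$ on the factor $\1^\et[\tfrac12]^-$ (Lemma~\ref{lem:swap-GW}), if it also acts as $+1$ there, then the identity of $\1^\et[\tfrac12]^-$ is both $2$-divisible and annihilated by $2$, hence zero. Now $\langle-1\rangle^2=\langle1\rangle=1$ by Proposition~\ref{prop:a^2=1}, so the equality $\langle-1\rangle=1$ in $\MS^\et_S[\tfrac12]$ is equivalent to $1+\langle-1\rangle$ being a unit in $\pi_0\Map(\1^\et,\1^\et)$: if $u=1+\langle-1\rangle$ is a unit, then $\langle-1\rangle u=\langle-1\rangle+\langle-1\rangle^2=u$ forces $\langle-1\rangle=1$, and the converse is clear. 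This invertibility is a property local for the étale topology.

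Next, exactly as in Proposition~\ref{prop:-1}, I would use that $\MS^\et$ satisfies étale descent and is continuous in the base to reduce to the case $S=\Spec R$ with $R$ a strictly henselian local ring, these being the stalks for the étale topology. Here the key point is that $-1$ is automatically a unit sum of squares in $R$, with no further hypothesis on $S$: the residue field $k$ of $R$ is separably closed, so if $\Char k\neq2$ then $2\in R^\times$ and a root of $x^2+1$ over $k$ lifts by Hensel's lemma to a square root of $-1$ in $R$, while if $\Char k=2$ then $k\supseteq\F_{4}$ and Remark~\ref{rmk:unit-sum}(iii) applies: $R$ contains a primitive cube root of unity $\zeta$, and $-1=\zeta+\zeta^2$ exhibits $-1$ as a sum of two units each of which is a square. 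In either case $-1$ is a unit sum of squares of exponent $\leq1$, so Lemma~\ref{lem:unit-sum} gives $2(\langle-1\rangle-1)=0$ and hence $\langle-1\rangle=1$ in $\MS^\et_R[\tfrac12]$. This proves $\1^\et[\tfrac12]^-=0$.

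For the final assertion, rationalizing the splitting $\1^\et[\tfrac12]=\1^\et[\tfrac12]^+\times\1^\et[\tfrac12]^-$ collapses it to $\1^\et_\Q=(\1^\et_\Q)^+$; since the symmetric monoidal localization $\MS_S\to\MS^\et_S$ carries the positive eigenspace $\1_\Q^+$ to $(\1^\et_\Q)^+$, the identification $\1^\et_\Q=\HH\Q$ follows from Corollary~\ref{cor:Q+}.

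The step I expect to be the main obstacle is the reduction to strictly henselian local rings: it requires knowing that $\MS^\et$ is compatible with étale descent and continuous in the base, so that invertibility of $1+\langle-1\rangle$ — checkable on an étale cover — is detected on the étale stalks obtained by strict henselization. This should be available from the foundational material on $\MS^\et$ in \cite{atiyah}; granting it, the remaining analysis (including the mixed-characteristic case where $2$ is neither invertible nor nilpotent in $R$) is routine.
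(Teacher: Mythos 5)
Your core computation is the right one, and it matches the paper's in substance: in residue characteristic $\neq 2$ a square root of $-1$ exists, and in residue characteristic $2$ one has $-1=\zeta+\zeta^2$ with $\zeta$ a primitive cube root of unity and $\zeta=(\zeta^2)^2$, so $-1$ is a unit sum of squares of exponent $\leq 1$ and Lemma~\ref{lem:unit-sum} applies. The gap is exactly the step you flag: the reduction to strictly henselian local rings. This requires $\pi_0\Map_{\MS^\et}(\1,\1)$ to commute with the filtered colimit over étale neighbourhoods, and unlike the Nisnevich-local continuity used in Proposition~\ref{prop:-1}, continuity of $\MS^\et$ is genuinely delicate: étale sheafification does not commute with filtered colimits without cohomological-dimension hypotheses, and the unit of $\MS_S^\et$ need not be compact. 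The paper does not establish (or need) such a statement. Instead it avoids stalks entirely by exhibiting an explicit étale covering sieve of $\Spec\Z$ generated by $\Z[i]$ and $\Z[\zeta_3]$ --- precisely the global incarnations of your two cases, since $-1=i^2$ and $-1=\zeta_3+\zeta_3^2$ with $\zeta_3=(\zeta_3^2)^2$ --- and then applies Proposition~\ref{prop:-1} in the Nisnevich-local category over each member of the pulled-back cover; only étale descent for $\MS^\et$ is then needed to conclude. Your argument can be repaired in the same spirit without any continuity of $\MS^\et$: the condition ``$-1$ is a unit sum of squares of exponent $\leq 1$'' involves finitely many elements and unit conditions, so it spreads out from the strict henselization $\colim_U\scr O(U)$ to an honest étale neighbourhood $U$ of each point; one then applies Proposition~\ref{prop:-1} in $\MS_U$, passes to $\MS_U^\et$ along the symmetric monoidal localization, and uses étale descent.

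One further omission in the last step: Corollary~\ref{cor:Q+} together with the localization $\MS_S\to\MS_S^\et$ only gives $\1^\et_\Q=\L_\et(\HH\Q)$, so to conclude $\1^\et_\Q=\HH\Q$ you still need the fact that $\HH\Q$ already satisfies étale descent (which the paper cites explicitly at this point). Without it, the ``Hence'' part of the statement is not yet proved.
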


\begin{proof}
	It suffices to prove this over $\Z[i]$ and $\Z[\zeta_3]$, since these generate an étale covering sieve of $\Z$.
	In both rings, $-1$ is a unit sum of squares, so the result follows from Proposition~\ref{prop:-1}.
	The last statement follows from Corollary~\ref{cor:Q+} and the fact that $\HH\Q$ is an étale sheaf.
\end{proof}

\begin{proposition}\label{prop:HQ-etale}
	Every $\HH\Q$-module in $\MS_S$ satisfies étale descent.
\end{proposition}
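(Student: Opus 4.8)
The plan is to reduce the statement to Proposition~\ref{prop:1-etale} via the localization/idempotence structure. First I would recall that $\HH\Q=\1_\Q^+$ is an idempotent $\E_\infty$-algebra in $\MS_S$ by Corollary~\ref{cor:Q+}, so that $\Mod_{\HH\Q}(\MS_S)\subset\MS_S$ is a smashing localization; in particular an object $E\in\MS_S$ is an $\HH\Q$-module iff $E\simeq E\otimes\HH\Q$, and the inclusion preserves all limits and colimits. Since étale descent is a statement about limits over Čech nerves of étale covers, it suffices to show that the $\HH\Q$-linearization functor $(-)\otimes\HH\Q\colon \MS_S\to\MS_S$ sends the étale-localization monad to an equivalence on $\HH\Q$-modules — equivalently, that for every $E\in\Mod_{\HH\Q}(\MS_S)$ the canonical map $E\to L_\et E$ is an equivalence, where $L_\et$ denotes the étale-localization endofunctor on $\MS_S$ (which exists and is the right adjoint to $\MS_S^\et\to\MS_S$ composed with that inclusion).

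Next I would observe that the étale sheafification functor is compatible with the tensor structure: $L_\et$ is a smashing-type localization in the sense that $L_\et E\simeq E\otimes L_\et\1=E\otimes\1^\et$, because $\MS_S^\et$ is obtained from $\MS_S$ by a further localization at étale-local equivalences and $\1^\et$ is the corresponding idempotent algebra (this is the content of the machinery cited in~\cite{atiyah}; the key point is that étale descent is a $\otimes$-ideal condition since the étale topology is finitary and the relevant local objects form a $\otimes$-ideal). Then for $E$ an $\HH\Q$-module we have
\[
L_\et E \simeq E\otimes\1^\et \simeq (E\otimes\HH\Q)\otimes\1^\et \simeq E\otimes(\HH\Q\otimes\1^\et).
\]
By Proposition~\ref{prop:1-etale}, $\1^\et_\Q=\HH\Q$, and since $E$ is rational (being an $\HH\Q$-module, hence a module over a $\Q$-algebra) we get $\HH\Q\otimes\1^\et\simeq\HH\Q\otimes\1^\et_\Q\simeq\HH\Q\otimes\HH\Q\simeq\HH\Q$ by idempotence. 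Therefore $L_\et E\simeq E\otimes\HH\Q\simeq E$, i.e.\ $E$ already satisfies étale descent.

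The main obstacle I anticipate is justifying the identity $L_\et E\simeq E\otimes\1^\et$ — that is, that étale sheafification on $\MS_S$ is given by tensoring with the idempotent algebra $\1^\et$. This requires knowing that the full subcategory of étale-local motivic spectra is a $\otimes$-ideal (closed under tensoring with arbitrary objects of $\MS_S$), which in turn rests on the étale topology being generated by a finitary, quasi-compact pretopology so that étale descent is detected by a finite limit condition that interacts well with the (compactly generated, hence $\otimes$-preserving colimits) monoidal structure; this is presumably established in the construction of $\MS_S^\et$ in~\cite{atiyah} and I would simply cite it. A cleaner packaging, which avoids discussing $L_\et$ directly, is: $\HH\Q$ itself satisfies étale descent — indeed $\HH\Q=\1^\et_\Q$ is by construction an étale sheaf (it is pulled back from $\MS_{\Z}^\et$, or directly: rational motivic cohomology satisfies étale descent) — and the full subcategory of étale-local objects in $\MS_S$ is closed under limits and under the relative tensor $\otimes_{\HH\Q}$; since any $\HH\Q$-module $E$ is a colimit (geometric realization of the bar construction) of free $\HH\Q$-modules $\HH\Q\otimes X$, and... — but this last step needs the local objects to be closed under the relevant colimits, which again is the finitariness input. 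So either way the crux is the finitary/$\otimes$-ideal property of étale descent in $\MS_S$, everything else being formal manipulation of idempotent algebras together with Proposition~\ref{prop:1-etale}.
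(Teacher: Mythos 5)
There is a genuine gap, and it sits exactly where you locate "the crux": the identity $L_\et E\simeq E\otimes\1^\et$. The localization $\MS_S\to\MS_S^\et$ is \emph{monoidal} (étale covers pull back, so local equivalences form a $\otimes$-ideal of \emph{maps}), but that is much weaker than being \emph{smashing}: it does not follow that the étale-local \emph{objects} are closed under tensoring with arbitrary objects, nor that $L_\et$ preserves colimits, and finitariness of the topology does not help --- descent along a cover is a limit condition on mapping spectra and does not commute with $\otimes$. Nothing in \cite{atiyah} establishes that $L_\et$ is smashing; in fact, the statement that $L_\et$ becomes smashing after rationalization (with idempotent algebra $\1^\et_\Q=\HH\Q$) is precisely Corollary~\ref{cor:HQ-etale}, which the paper \emph{deduces from} this proposition. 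So your argument assumes a statement equivalent to the one being proved. Your fallback packaging has the same problem: knowing that $\HH\Q$ is an étale sheaf does not make the free modules $\HH\Q\otimes X_+$ étale sheaves, and closure of local objects under the bar-construction colimits is again the missing $\otimes$-ideal/smashing input.

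The paper's proof supplies the missing content by a transfer argument rather than by formal localization theory. One first reduces (using that $\MS_S$ is already Nisnevich-local) to showing that the sieve $R_f\into S_+$ of a surjective \emph{finite} étale map $f\colon T\to S$ becomes an equivalence in $\Mod_{\HH\Q}(\MS_S)$; by Yoneda in $\Mod_{\HH\Q}$ this is equivalent to descent along $f$ for all $\HH\Q$-modules. Atiyah duality for finite étale maps gives $f_\sharp\simeq f_*$ and hence a transfer $\tau_f\colon\id\to f_*f^*\to\id$, which on $\HH\Z$-modules is multiplication by $\deg f$ (checked over smooth $\Z$-schemes and extended by left Kan extension and base change). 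Since $\deg f>0$ is invertible rationally, $\id$ is a retract of $f_*f^*$ on $\HH\Q$-modules, so $f^*$ is conservative there, and $f^*$ visibly inverts $R_f\into S_+$. This positivity-of-degree input is not formal and cannot be recovered from Proposition~\ref{prop:1-etale} together with idempotent-algebra manipulations; you would need to add it (or an equivalent conservativity statement) to make your reduction work.
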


\begin{proof}
	It suffices to check that the sieve generated by a surjective finite étale map $f\colon T\to S$ becomes an isomorphism in $\Mod_{\HH\Q}(\MS_S)$, and we may assume $S$ affine. By Atiyah duality \cite[Theorem 5.9]{atiyah}, the canonical map $\alpha_f\colon f_\sharp \to f_*$ is an isomorphism.
	Let $\tau_f$ be the endomorphism of $\id_{\MS_S}$ given by
	\[
	\id\xrightarrow{\smash[t]{\mathrm{unit}}} f_*f^* \xleftarrow{\alpha_f} f_\sharp f^*\xrightarrow{\smash[t]{\mathrm{counit}}} \id.
	\]
	By \cite[Proposition 5.12(ii)]{atiyah}, if $E$ is an $R$-module, then $\tau_f\colon E\to E$ is $R$-linear and is multiplication by $\tau_f\colon R\to R$ (hence by $\tau_f\colon \1\to \1$).
	For $R=\HH\Z$, we claim that $\tau_f$ is multiplication by the degree of $f$.
	This is known if $S$ is smooth over $\Z$, and we can reduce to this case since the stack of finite étale schemes is left Kan extended from smooth $\Z$-algebras and $\tau_f$ is compatible with base change \cite[Proposition 5.12(i)]{atiyah}. Thus, since the degree of $f$ is everywhere positive, the identity of $\Mod_{\HH\Q}(\MS_S)$ is a retract of $f_*f^*$, so $f^*$ is conservative. But $f^*$ sends the sieve generated by $f$ to an isomorphism, so we are done.
\end{proof}

\begin{corollary}\label{cor:HQ-etale}
	Let $S$ be a derived scheme. Then $\Mod_{\HH\Q}(\MS_S)=(\MS_S^\et)_\Q$.
\end{corollary}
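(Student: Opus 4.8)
The plan is to derive this purely formally from Propositions~\ref{prop:1-etale} and~\ref{prop:HQ-etale}, together with the fact that $\HH\Q=\1_\Q^+$ is an idempotent $\E_\infty$-algebra in $\MS_S$ (Corollary~\ref{cor:Q+}; \cite[Theorem~9.16]{atiyah}). Idempotence means that $\Mod_{\HH\Q}(\MS_S)$ is a reflective full subcategory of $\MS_S$ — the objects $E$ for which $E\to E\otimes\HH\Q$ is an equivalence — so both sides of the claimed equality are full subcategories of $\MS_S$ and it suffices to check that they have the same objects. Note also that $\HH\Q$ is rational, being a retract of the rationalization $\1_\Q$ of $\1$; hence every $\HH\Q$-module is rational.

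First I would prove the containment $\Mod_{\HH\Q}(\MS_S)\subseteq(\MS_S^\et)_\Q$: an $\HH\Q$-module is rational by the above and satisfies étale descent by Proposition~\ref{prop:HQ-etale}, so it lies in the reflective subcategory $\MS_S^\et\subseteq\MS_S$ and therefore in $(\MS_S^\et)_\Q$. For the reverse containment I would first identify $\Mod_{\HH\Q}(\MS_S)=\Mod_{\HH\Q}(\MS_S^\et)$: for $E\in\MS_S^\et$ the object $E\otimes\HH\Q$ formed in $\MS_S$ is again an $\HH\Q$-module, hence again étale-local by Proposition~\ref{prop:HQ-etale}, so it agrees with the tensor product formed in $\MS_S^\et$ (the étale localization being symmetric monoidal), and the two full subcategories coincide. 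Then, since $\HH\Q=\1^\et_\Q$ in $\MS_S^\et$ by Proposition~\ref{prop:1-etale} and rationalization is a smashing localization, we get $\Mod_{\HH\Q}(\MS_S^\et)=\Mod_{\1^\et_\Q}(\MS_S^\et)=(\MS_S^\et)_\Q$. Chaining these identifications completes the argument.

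I expect the only substantive point to be the identification $\Mod_{\HH\Q}(\MS_S)=\Mod_{\HH\Q}(\MS_S^\et)$, which is precisely where Proposition~\ref{prop:HQ-etale} is used in an essential way: one must know that tensoring an étale-local spectrum with $\HH\Q$ stays inside $\MS_S^\et$, so that the ambient symmetric monoidal structure can be switched without changing the notion of $\HH\Q$-module. Everything else — idempotence of $\HH\Q$, symmetric monoidality of the étale localization, and the identification of a rationalized presentable stable symmetric monoidal category with modules over its rationalized unit — is standard, so the main care required is bookkeeping around which category each tensor product is computed in.
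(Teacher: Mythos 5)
Your proof is correct and follows the same route as the paper, whose proof of this corollary is literally the one line ``Combine Propositions~\ref{prop:1-etale} and~\ref{prop:HQ-etale}''; you have simply made the formal bookkeeping explicit (idempotence of $\HH\Q$, the identification $\Mod_{\HH\Q}(\MS_S)=\Mod_{\HH\Q}(\MS_S^\et)$ via étale-locality of $\HH\Q$-modules, and $\Mod_{\1^\et_\Q}(\MS_S^\et)=(\MS_S^\et)_\Q$ from Proposition~\ref{prop:1-etale}), and the details check out.
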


\begin{proof}
	Combine Propositions \ref{prop:1-etale} and~\ref{prop:HQ-etale}.
\end{proof}

\begin{theorem}
	\label{thm:beilinson}
	Let $S$ be a derived scheme and let $E\in(\MS_S)_\Q$ be a rational motivic spectrum over $S$. The following conditions are equivalent:
	\begin{enumerate}
		\item $E$ is an $\HH\Q$-module.
		\item $E$ admits a structure of $\MGL$-module.
		\item $E$ is orientable.
		\item The swap automorphism of $\Sigma_{\P^1}^2 E$ is the identity.
		\item The involution $\langle -1\rangle\colon E\to E$ is the identity.
		\item The Hopf map $\eta\colon \Sigma_{\P^1} E\to \Sigma E$ is zero.
		\item $E$ satisfies étale descent.
	\end{enumerate}
	Moreover, these conditions always hold if $-1$ is a unit sum of squares in each henselian local ring of $S$ (see Definition~\ref{def:unit-sum} and Remark~\ref{rmk:unit-sum}).
\end{theorem}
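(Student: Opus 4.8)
The plan is to prove the chain of equivalences by a cycle of implications, using the structural results already established in the paper, and then deduce the final claim from Proposition~\ref{prop:-1}.

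\textbf{Setting up the cycle.} I would first observe that (i) $\Rightarrow$ (ii) is immediate once we know $\HH\Q$ is an $\MGL$-algebra: by Theorem~\ref{thm:Q+} and Corollary~\ref{cor:Q+}, $\1_\Q^+=\HH\Q$ is oriented, hence receives an $\E_\infty$-map from $\MGL$ by the universal property of $\MGL$ among oriented motivic $\E_\infty$-rings, so every $\HH\Q$-module is an $\MGL$-module. Next, (ii) $\Rightarrow$ (iii) is formal: an $\MGL$-module is orientable because $\MGL$ is oriented and orientations pull back along ring maps. The implication (iii) $\Rightarrow$ (iv) is Remark~\ref{rmk:1/2+}(ii) (or rather its proof via the projective bundle formula): for orientable $E$, any automorphism of a vector bundle, in particular the swap on $\Sigma_{\P^1}^2 E = \Th(\scr O^{\oplus 2})\otimes E$, acts as the identity. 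For (iv) $\Rightarrow$ (v) we invoke Lemma~\ref{lem:swap-GW}: the swap automorphism of $\P^1\otimes\P^1$ \emph{is} $\langle -1\rangle$, so if the former is the identity on $\Sigma_{\P^1}^2 E$ then—after desuspending, which is harmless since $\Sigma_{\P^1}$ is invertible—$\langle -1\rangle$ is the identity on $E$. Then (v) $\Rightarrow$ (vi): since $E$ is rational and $\langle -1\rangle = \id$, we have $\epsilon = -\langle -1\rangle = -1$ on $E$ by Lemma~\ref{lem:swap-GW}, and then $\eta = \epsilon\eta = -\eta$ by \eqref{eqn:epsilon}, so $2\eta = 0$ and hence $\eta = 0$ rationally.

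\textbf{Closing the cycle and bringing in étale descent.} The implication (vi) $\Rightarrow$ (i) is the substantive one. Here I would argue that if $\eta = 0$ on $E$, then $E$ is a module over the rational motivic sphere with $\eta$ inverted-to-zero, i.e.\ over $\1_\Q/\eta$; but $\eta = 0$ forces $E$ to be a $\1_\Q^+$-module—concretely, $\eta = 0$ on $E$ implies $\epsilon$ acts as $-1$ (since $\epsilon = -1$ on the $+$-part and $\epsilon = +1$ on the $-$-part, and on the $-$-part $\eta$ need not vanish... more carefully: the $-$-summand $\1_\Q^-$ has $\langle -1\rangle = -1$, and one checks $\eta$ is a unit times an invertible generator there, so $\eta = 0$ kills the $-$-summand of $E$). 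Then $E = E_\Q^+$ is a $\1_\Q^+ = \HH\Q$-module. This closes the cycle (i)–(vi). Finally, (i) $\Leftrightarrow$ (vii): Proposition~\ref{prop:HQ-etale} gives (i) $\Rightarrow$ (vii), and for the converse, if $E$ satisfies étale descent then $E = E^\et$ lives in $(\MS_S^\et)_\Q$, which equals $\Mod_{\HH\Q}(\MS_S)$ by Corollary~\ref{cor:HQ-etale}, giving (vii) $\Rightarrow$ (i).

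\textbf{The last assertion.} For the ``moreover'' clause, suppose $-1$ is a unit sum of squares in every henselian local ring of $S$. By Proposition~\ref{prop:-1}, $\langle -1\rangle = 1$ in $\MS_S[\tfrac 12]$, hence a fortiori in $(\MS_S)_\Q$, so $\langle -1\rangle = \id$ on every rational $E$; this is condition (v), which by the cycle implies all the others.

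\textbf{Expected main obstacle.} The delicate step is (vi) $\Rightarrow$ (i), specifically pinning down how $\eta = 0$ forces the $-$-summand to vanish. Rationally, $\1_\Q = \1_\Q^+ \times \1_\Q^-$, and one must show that $\eta$ acts invertibly (or at least is a non-zero-divisor in the relevant sense) on the $-$-summand, so that killing $\eta$ kills $E_\Q^-$. This uses that on $\1_\Q^-$ one has $\langle -1\rangle = -1$, hence $\langle a\rangle = -1$ for relevant units and the Milnor–Witt relations degenerate so that $\eta$ is invertible—this is the rational analogue of the Witt-theoretic statement that $\W(\Q)[\tfrac12]$-modules are exactly the $\eta$-periodic ones. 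I would likely handle this by noting $\1_\Q^- = \1_\Q/(1+\langle -1\rangle)$ and that $\eta$ is invertible there because $h = \eta[-1]+2 = 2$ (rationally, since $\eta[-1]$: on the $-$-part $\langle -1\rangle - 1 = -2$ so $\eta[-1] = -2$, giving $h = 0$), while $\eta h = 0$ is automatic; the precise bookkeeping of which generator of $\1_\Q^-$ carries $\eta$-invertibility is where the real care is needed, and I would lean on the identification already available through $\1_\Q^+ = \HH\Q$ together with the idempotence of these algebras.
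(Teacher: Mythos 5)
Your cycle matches the paper's up to the point where you close it, but the closing step (vi) $\Rightarrow$ (i) as you have written it contains a genuine gap. You argue that $\eta=0$ on $E$ kills the minus summand $E^-_\Q$ because ``$\eta$ is a unit times an invertible generator'' on $\1_\Q^-$, i.e.\ that $\eta$ acts invertibly on the minus part. Nothing in the paper establishes this: the $\eta$-invertibility of the minus part is a substantial theorem in $\A^1$-homotopy theory (related to Witt theory), and in the non-$\A^1$-invariant category $\MS_S$ it is not available. Your attempted justification via $h=\eta[-1]+2$ only shows $h=0$ on the minus part, which gives no information about invertibility of $\eta$. As it stands, your chain (i) $\Rightarrow\dots\Rightarrow$ (vi) $\Rightarrow$ (i) is therefore not closed.

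The repair is short and uses only what is already in the paper, and is what the paper does. From (vi), Lemma~\ref{lem:brackets} gives $\langle -1\rangle=\eta[-1]+1=1$ on $E$, i.e.\ (v); by Lemma~\ref{lem:swap-GW} this is the same as (iv); and (iv) $\Rightarrow$ (i) is essentially definitional: the splitting $\1_\Q=\1_\Q^+\times\1_\Q^-$ is by eigenspaces of the swap on $\P^1\otimes\P^1$, so a rational spectrum on which the swap acts as the identity is a $\1_\Q^+$-module, hence an $\HH\Q$-module by Corollary~\ref{cor:Q+}. (This definitional equivalence (i) $\Leftrightarrow$ (iv) is the pivot the paper uses and the one missing from your write-up.) The rest of your argument --- (i) $\Rightarrow$ (ii) via the orientation of $\HH\Q$ and the universal property of $\MGL$, (ii) $\Rightarrow$ (iii) $\Rightarrow$ (iv) $\Rightarrow$ (v) $\Rightarrow$ (vi), the equivalence with (vii) via Propositions~\ref{prop:HQ-etale} and Corollary~\ref{cor:HQ-etale}, and the ``moreover'' clause via Proposition~\ref{prop:-1} --- is correct and agrees with the paper (which simply cites \cite[Proposition 9.19]{atiyah} for (i) $\Leftrightarrow$ (ii)).
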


\begin{proof}
	The equivalence (i)~$\Leftrightarrow$~(ii) was proved in \cite[Proposition 9.19]{atiyah}.
	Since $\MGL$ is an orientable $\E_\infty$-ring \cite[\sect 7]{AHI}, any $\MGL$-module is orientable, hence (ii)~$\Rightarrow$~(iii).
	The equivalence (i)~$\Leftrightarrow$~(iv) is Corollary~\ref{cor:Q+} and the equivalence (i)~$\Leftrightarrow$~(vii) is Corollary~\ref{cor:HQ-etale}.
	The equivalence (iv)~$\Leftrightarrow$~(v) follows from Lemma ~\ref{lem:swap-GW} and the implication (vi)~$\Rightarrow$~(v) from Lemma~\ref{lem:brackets}.
	The implications (iii)~$\Rightarrow$~(iv)~$\Rightarrow$~(vi) follow from Remark~\ref{rmk:1/2+} and complete the circle.
	Finally, the last statement follows from Proposition~\ref{prop:-1}.
\end{proof}

\begin{remark}
	In Theorem~\ref{thm:beilinson}, the implications (ii) $\Rightarrow$ (iii) $\Rightarrow$ (vi) $\Rightarrow$ (iv) $\Leftrightarrow$ (v) hold for any $E\in \MS_S$ (the second by Proposition~\ref{prop:hopf}), and the implications (vii) $\Rightarrow$ (v) $\Rightarrow$ (vi) hold for any $E\in\MS_S[\tfrac 12]$.
	
	We note that the Steinberg relation only enters the proof of Theorem~\ref{thm:beilinson} when dealing with $\Z[\zeta_3]$ in Proposition~\ref{prop:1-etale}. Thus, it is not needed to prove the equivalence of (i)–(vi), nor to prove the equivalence with (vii) if $S$ is a $\Z[\tfrac 12]$-scheme.
\end{remark}

\providecommand{\bysame}{\leavevmode\hbox to3em{\hrulefill}\thinspace}
\providecommand{\href}[2]{#2}

\end{document}